\newtheorem{theorem}{Theorem}[section]
\newtheorem{corollary}[theorem]{Corollary}
\newtheorem{lemma}[theorem]{Lemma}
\newtheorem{proposition}[theorem]{Proposition}
\theoremstyle{definition}
\theoremstyle{remark}
\theoremstyle{definition}
\newtheorem{assumption}[theorem]{Assumption}
\numberwithin{equation}{section}
\def\vu{\textit{\textbf{u}}}
\def\vv{\textit{\textbf{v}}}
\def\vw{\textit{\textbf{w}}}
\def\vf{\textit{\textbf{f}}}
\def\vF{\textit{\textbf{F}}}
\def\vg{\textit{\textbf{g}}}
\def\vh{\textit{\textbf{h}}}
\def\bA{\mathbb{A}}
\def\bO{\mathbb{O}}
\def\bR{\mathbb{R}}
\def\bZ{\mathbb{Z}}
\def\bC{\mathbb{C}}
\def\cA{\mathcal{A}}
\def\cB{\mathcal{B}}
\def\cD{\mathcal{D}}
\def\cE{\mathcal{E}}
\def\cM{\mathcal{M}}
\def\cQ{\mathcal{Q}}
\def\cT{\mathcal{T}}
\def\cL{\mathcal{L}}
\newcommand{\Div}{\operatorname{div}}
\newcommand{\dist}{\text{dist}}
\def\dashint{\operatorname%
{\,\,\text{\bf--}\kern-.98em\DOTSI\intop\ilimits@\!\!}}
\begin{document}

\title[Higher order elliptic systems]
{The conormal derivative problem for higher order elliptic systems
with irregular coefficients}


\author[H. Dong]{Hongjie Dong}
\address[H. Dong]{Division of Applied Mathematics, Brown University,
182 George Street, Providence, RI 02912, USA}
\email{Hongjie\_Dong@brown.edu}
\thanks{H. Dong was partially supported by the NSF under agreement DMS-0800129 and DMS-1056737.}

\author[D. Kim]{Doyoon Kim}
\address[D. Kim]{Department of Applied Mathematics, Kyung Hee University, 1732 Deogyeong-daero, Giheung-gu, Yongin-si, Gyeonggi-do 446-701, Republic of Korea}
\email{doyoonkim@khu.ac.kr}
\thanks{D. Kim was supported by Basic Science Research Program through the National Research Foundation of Korea (NRF) funded by the Ministry of Education, Science and Technology (2011-0013960).}

\subjclass[2000]{35K52, 35J58,35R05}

\date{}

\begin{abstract}
We prove $L_p$ estimates of solutions to a conormal derivative problem for divergence form complex-valued higher-order elliptic systems on a half space and
on a Reifenberg flat domain. The leading coefficients are assumed
to be merely measurable in one direction and have small mean
oscillations in the orthogonal directions on each small ball. Our results are new even in the second-order case. The corresponding results for the Dirichlet problem were obtained recently in \cite{DK10}.
\end{abstract}

\maketitle

\section{Introduction}

This paper is concerned with $L_p$ theory for higher-order elliptic systems in divergence form with {\em conormal} derivative boundary conditions. Our focus is to seek minimal regularity assumptions on the leading coefficients of elliptic systems defined on regular and irregular domains. The paper is a continuation of \cite{DK09_01,DK10}, where the authors considered higher-order systems in the whole space and on domains with {\em Dirichlet} boundary conditions.

There is a vast literature on $L_p$ theory for second-order and higher-order elliptic and parabolic equations/systems with {\em constant} or {\em uniformly continuous} coefficients. We refer the reader to the classical work \cite{A65,ADN64,Solo,LSU,Fried}. Concerning possibly discontinuous coefficients, a notable class is the set of bounded functions with vanishing mean oscillations (VMO). This class of coefficients was firstly introduced in \cite{CFL1,CFL2} in the case of second-order non-divergence form elliptic equations, and further considered by a number of authors in various contexts, including higher-order equations and systems; see, for instance, \cite{CFF,HHH,Mi06,PS3}.

Recently, in \cite{DK09_01,DK10} the authors studied the {\em Dirichlet} problem for higher-order elliptic and parabolic systems with possibly measurable coefficients. In \cite{DK09_01}, we established the $L_p$-solvability of both divergence and non-divergence form systems with coefficients (called $\text{VMO}_x$ coefficients in \cite{Krylov_2005}) having locally small mean oscillations with respect to the spatial variables, and measurable in the time variable in the parabolic case. While in \cite{DK10}, divergence form elliptic and parabolic systems of arbitrary order are considered in the whole space,  on a half space, and on Reifenberg flat domains, with {\em variably partially BMO coefficients}. This class of coefficients was introduced in \cite{Krylov08} in the context of second-order non-divergence form elliptic equations in the whole space, and naturally appears in the homogenization of layered materials; see, for instance, \cite{CKV}.  It was later considered by the authors of the present article in \cite{DK09_02,DK10} and by Byun and Wang in \cite{BW10}. Loosely speaking, on
each cylinder (or ball in the elliptic case), the coefficients are
allowed to be merely
measurable in one spatial direction called the {\em measurable direction}, which may vary for different cylinders. It is also assumed that the coefficients have small mean oscillations in the orthogonal directions, and near the boundary the measurable direction is sufficiently close to the ``normal'' direction of the boundary. Note that the boundary of a Reifenberg flat domain is locally trapped in thin discs, which allows the boundary to have a fractal structure; cf. \eqref{eq3.49}. Thus the normal direction of the boundary may not be well defined for Reifenberg flat domains, so instead we take the normal direction of the top surface of these thin discs.

The proofs in \cite{DK09_01,DK10} are in the spirit of \cite{Krylov_2005} by N. V. Krylov, in which the author gave a unified approach of $L_p$ estimates for both divergence and non-divergence second-order elliptic and parabolic equations in the whole space with $\text{VMO}_x$ coefficients. One of the crucial steps in \cite{Krylov_2005} is to establish certain interior {\em mean oscillation estimates}\footnote{Also see relevant early work \cite{Iw83,DM93}.} of solutions to equations with ``simple'' coefficients, which are measurable functions of the time variable only. Then the estimates for equations with $\text{VMO}_x$ coefficients follow from the mean oscillation estimates combined with a perturbation argument. In this connection, we point out that in \cite{DK09_01,DK10} a great deal of efforts were made to derive boundary and interior mean oscillation estimates for solutions to higher-order systems. For systems in Reifenberg flat domains, we also used an idea in \cite{CaPe98}.

In this paper, we study a {\em conormal} derivative problem for elliptic operators in
divergence form of order $2m$:
\begin{equation}                             \label{eq0617_02}
\cL \vu :=\sum_{|\alpha|\le m,|\beta|\le
m}D^\alpha(a_{\alpha\beta}D^\beta\vu),
\end{equation}
where $\alpha$ and $\beta$ are $d$-dimensional multi-indices,
$a_{\alpha\beta}=[a_{\alpha\beta}^{ij}(x)]_{i,j=1}^n$ are $n\times n$ complex matrix-valued functions,
and $\vu$ is a complex vector-valued function.
For $\alpha=(\alpha_1,\ldots,\alpha_d)$, we use the notation
$D^\alpha \vu =D_1^{\alpha_1}\ldots D_d^{\alpha_d} \vu$.
All the coefficients are assumed to be bounded and measurable, and
$\cL$ is uniformly elliptic; cf. \eqref{eq11.28}. Consider the following elliptic system
\begin{equation}
                                        \label{eq9.15}
(-1)^m\cL \vu+\lambda \vu=\sum_{|\alpha|\le m}D^\alpha\vf_\alpha
\end{equation}
on a domain $\Omega$ in $\bR^d$, where $\vf_\alpha \in L_p(\Omega)$, $p\in (1,\infty)$, and $\lambda\ge 0$ is a constant. A function $\vu\in W^m_p$ is said to be a weak solution to \eqref{eq9.15} on $\Omega$ with the conormal derivative boundary condition associated with $\vf_\alpha$ (on $\partial \Omega$) if
\begin{equation}
                                \label{eq3.02}
\int_{\Omega}\sum_{|\alpha|\le m,|\beta|\le m}(-1)^{m+|\alpha|} D^\alpha\phi \cdot
a_{\alpha\beta}D^\beta\vu +\lambda \phi\cdot \vu\,dx
=\sum_{|\alpha|\le m}\int_{\Omega}(-1)^{|\alpha|}D^\alpha\phi\cdot
\vf_\alpha\,dx
\end{equation}
for any test function $\phi=(\phi^1,\phi^2,\ldots,\phi^n)\in W^m_q(\Omega)$, where $q=p/(p-1)$. We emphasize that the phrase ``associated with $\vf_\alpha$'' is appended after ``the conormal derivative boundary condition'' because for different representations of the right-hand side of \eqref{eq9.15}, even if they are pointwise equal, the weak formulation \eqref{eq3.02} could still be different.
In the sequel, we omit this phrase when there is no confusion. We note that the equation above can also be understood as
$$
\int_{\Omega}\sum_{|\alpha|\le m,|\beta|\le m}(-1)^{m+|\alpha|} D^\alpha\phi \cdot
a_{\alpha\beta}D^\beta\vu +\lambda \phi\cdot \vu\,dx=\vF(\phi)\quad \forall \phi\in W^m_{q}(\Omega),
$$
where $\vF$ is a given vector-valued bounded linear functional on $W^m_{q}(\Omega)$. The main objective of the paper is to show the unique $W^m_p(\Omega)$-solvability of \eqref{eq9.15} on a half space or on a possibly unbounded Reifenberg domain with the same regularity conditions on the leading coefficients, that is, {\em variably partially BMO coefficients}, as those in \cite{DK10}. See Section \ref{sec082001} for the precise statements of the assumptions and main results.

Notably, our results are new even for second-order scalar equations. In the literature, an $L_p$ estimate for the conormal derivative problem can be found in \cite{BW05}, where the authors consider second-order divergence elliptic equations without lower-order terms and with coefficients small BMO with respect to all variables on bounded Reifenberg domains. The proof in \cite{BW05} contains a compactness argument, which does not apply to equations with coefficients measurable in some direction discussed in the current paper. For other results about the conormal derivative problem, we refer the reader to \cite{Lieb1} and \cite{Lieb2}.

We prove the main theorems by following the strategy in \cite{DK10}. First, for systems with homogeneous right-hand side and coefficients measurable in one direction, we estimate the H\"older norm of certain linear combinations of $D^m\vu$ in the interior of the domain, as well as near the boundary if the boundary is flat and perpendicular to the measurable direction.  Then by using the H\"{o}lder estimates, we proceed to establish mean oscillation estimates of solutions to elliptic systems. As is expected, the obstruction is in the boundary mean oscillation estimates, to which we give a more detailed account. Note that when obtaining mean oscillation estimates of solutions, even in the half space case we do not require the measurable direction to be exactly perpendicular to the boundary, but allow it to be sufficiently close to the normal direction. For the Dirichlet problem in \cite{DK10}, we used a delicate cut-off argument together with a generalized Hardy's inequality. However, this method no longer works for the conormal derivative problem as solutions do not vanish on the boundary. The key observation in this paper is Lemma \ref{lem4.2} which shows that if one modifies the right-hand side a little bit, then the function $\vu$ itself still satisfies the system with the conormal derivative boundary condition on a subdomain with a flat boundary perpendicular to the measurable direction. This argument is also readily adapted to elliptic systems on Reifenberg flat domains with variably partially BMO coefficients.

The corresponding parabolic problem, however, seems to be still out of reach by the argument mentioned above. In fact, in the modified equation in Lemma \ref{lem4.2} there would be an extra term involving $\vu_t$ on the right-hand side. At the time of this writing, it is not clear to us how to estimate this term.

The remaining part of the paper is organized as follows. We state the main theorems in the next section. Section \ref{sec_aux} contains some auxiliary results including $L_2$-estimates, interior and boundary H\"older estimates, and approximations of Reifenberg domains. In Section \ref{sec4} we establish the interior and boundary mean oscillation estimates and then prove the solvability of systems on a half space. Finally we deal with elliptic systems on a Reifenberg flat domain in Section \ref{Reifenberg}.

We finish the introduction by fixing
some notation. By $\bR^d$ we mean a $d$-dimensional
Euclidean space, a point in $\bR^d$ is denoted by
$x=(x_1,\ldots,x_d)=(x_1,x')$, and $\{e_j\}_{j=1}^d$ is the
standard basis of $\bR^d$. Throughout the paper, $\Omega$
indicates an open set in $\bR^d$. For vectors $\xi,\eta\in \bC^n$,
we denote
$$
(\xi,\eta)=\sum_{i=1}^n \xi^i\overline{\eta^i}.
$$
For a function $f$ defined on a subset $\cD$ in $\bR^{d}$, we set
\begin{equation*}
(f)_{\cD} = \dashint_{\cD}
f(x) \, dx= \frac{1}{|\cD|} \int_{\cD} f(x) \, dx,
\end{equation*}
where $|\cD|$ is the $d$-dimensional Lebesgue measure of $\cD$.  Denote
\begin{align*}
\bR^d_+ &= \{(x_1,x') \in \bR^d: x_1 > 0\},\\
B_r(x) &= \{ y \in \bR^d: |x-y| < r\},\quad
B'_r(x') = \{ y' \in
\bR^{d-1}: |x'-y'| < r\},\\
B_r^+(x)&=B_r(x)\cap \bR^d_+,\quad
\Gamma_r(x)=B_r(x)\cap \partial\bR^d_+,\quad \Omega_r(x)=B_r(x)\cap \Omega.
\end{align*}
For a domain $\Omega$ in $\bR^d$, we define the solution
spaces $W_p^m(\Omega)$ as follows:
\begin{equation*}
W_p^m(\Omega) =\{u\in L_p(\Omega): D^{\alpha}u \in L_p(\Omega), 1
\le |\alpha| \le m \},
\end{equation*}
$$
\|u\|_{W_p^m(\Omega)} = \sum_{|\alpha|\le m}
\|D^{\alpha}u\|_{L_p(\Omega)}.
$$
We denote $C_{\text{loc}}^{\infty}(\cD)$ to be the set of all infinitely differentiable functions on $\cD$, and $C_0^{\infty}(\cD)$ the
set of infinitely differentiable functions with compact
support $\Subset \cD$.

\section{Main results} \label{sec082001}

Throughout the paper, we assume that the $n \times n$
complex-valued coefficient matrices $a_{\alpha\beta}$ are
measurable and bounded, and the leading coefficients
$a_{\alpha\beta}$, $|\alpha|=|\beta|=m$, satisfy an ellipticity
condition. More precisely, we assume:
\begin{enumerate}
\item There exists a constant $\delta \in (0,1)$ such that the
leading coefficients $a_{\alpha\beta}$, $|\alpha|=|\beta|=m$, satisfy
\begin{equation}
                            \label{eq11.28}
\delta |\xi|^2 \le
\sum_{|\alpha|=|\beta|=m}\Re(a_{\alpha\beta}(x) \xi_{\beta},
\xi_{\alpha}),
\quad
|a_{\alpha\beta}| \le \delta^{-1}
\end{equation}
for any $ x \in \bR^{d}$ and $\xi =
(\xi_{\alpha})_{|\alpha|=m}$, $\xi_{\alpha} \in \bC^n$.
Here we use $\Re(f)$ to denote the real part of $f$.

\item All the lower-order coefficients $a_{\alpha\beta}$,
$|\alpha| \ne m$ or $|\beta| \ne m$, are bounded by a constant
$K\ge 1$.
\end{enumerate}

We note that the ellipticity condition \eqref{eq11.28} can be relaxed. For instance, the operator $\cL=D_1^4+D_2^4$ is allowed when $d=m=2$. See Remark 2.5 of \cite{DK10}.

Throughout the paper we
write $\{\bar{a}_{\alpha\beta}\}_{|\alpha|=|\beta|=m} \in \bA$
whenever the $n \times n$ complex-valued matrices
$\bar{a}_{\alpha\beta}=\bar{a}_{\alpha\beta}(y_1)$ are measurable functions satisfying the condition \eqref{eq11.28}. For a linear map $\cT$ from
$\bR^d$ to $\bR^d$, we write $\cT \in \bO$ if $\cT$ is of the form
$$
\cT(x) = \rho x + \xi,
$$
where $\rho$ is a $d \times d$ orthogonal matrix and $\xi \in
\bR^d$.

Let $\cL$ be the elliptic operator defined in \eqref{eq0617_02}. Our first result is about the conormal derivative problem on a
half space. The following mild regularity assumption is imposed on the leading coefficients, with a parameter $\gamma \in (0,1/4)$ to be determined later.
\begin{assumption}[$\gamma$]                          \label{assumption20100901}
There is a constant $R_0\in (0,1]$ such that the following hold
with  $B:=B_r(x_0)$.

(i) For any $x_0\in \bR^d_+$ and any $r\in
\left(0,R_0\right]$ so that
$B\subset \bR^{d}_+$, one can find $\cT_B \in \bO$ and coefficient
matrices $\{\bar a_{\alpha\beta}\}_{|\alpha|=|\beta|=m} \in \bA$
satisfying
\begin{equation}
                                \label{eq10_23}
\sup_{|\alpha|=|\beta|=m}\int_B |a_{\alpha\beta}(x) -
\bar{a}_{\alpha\beta}(y_1)| \, dx \le \gamma |B|,
\end{equation}
where $y = \cT_B (x)$.

(ii) For any $x_0\in \partial \bR^d_+$ and any $r\in (0,R_0]$, one
can find $\cT_B \in \bO$ satisfying $\rho_{11}\ge \cos (\gamma/2)$
and coefficient matrices $\{\bar
a_{\alpha\beta}\}_{|\alpha|=|\beta|=m} \in \bA$ satisfying
\eqref{eq10_23}.
\end{assumption}

The condition $\rho_{11}\ge \cos (\gamma/2)$ with a sufficiently small $\gamma$ means that at any boundary point the $y_1$-direction is sufficiently close to the $x_1$-direction, i.e., the normal direction of the boundary.

\begin{theorem}[Systems on a half space]
                                    \label{thm3}
Let $\Omega=\bR^d_+$, $p \in (1,\infty)$, and
$$
\vf_\alpha= (f_\alpha^1, \ldots, f_\alpha^n)^{\text{tr}} \in
L_p(\Omega), \quad |\alpha|\le m.
$$
Then there exists a constant $\gamma=\gamma(d,n,m,p,\delta)$
such that, under Assumption \ref{assumption20100901} ($\gamma$),
the following hold true.

\noindent (i) For any $\vu \in W^m_p(\Omega)$ satisfying
\begin{equation}
                                    \label{eq1.55}
(-1)^m\cL \vu +\lambda \vu = \sum_{|\alpha|\le m}D^\alpha
\vf_\alpha
\end{equation} in $\Omega$ and the conormal derivative condition on $\partial\Omega$,
we have
\begin{equation*}
\sum_{|\alpha|\le m}\lambda^{1-\frac {|\alpha|} {2m}} \|D^\alpha
\vu \|_{L_p(\Omega)} \le N \sum_{|\alpha|\le m}\lambda^{\frac
{|\alpha|} {2m}} \| \vf_\alpha \|_{L_p(\Omega)},
\end{equation*}
provided that $\lambda \ge \lambda_0$,
where $N$ and $\lambda_0 \ge 0$
depend only on $d$, $n$, $m$, $p$, $\delta$, $K$ and $R_0$.

\noindent (ii) For any  $\lambda > \lambda_0$, there exists a
unique solution $\vu \in W_p^m(\Omega)$ to \eqref{eq1.55} with the
conormal derivative boundary condition.

\noindent
(iii)
If all the lower-order coefficients of $\cL$ are zero and the leading coefficients are measurable functions of $x_1\in \bR$ only, then one can take $\lambda_0=0$.
\end{theorem}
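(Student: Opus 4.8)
The plan is to follow the now-standard Krylov-type machinery adapted to the conormal setting, splitting the proof of Theorem \ref{thm3} into an a priori estimate (part (i)), a solvability step (part (ii)), and a zero-lower-order-term refinement (part (iii)). First I would reduce everything to mean oscillation estimates. The core analytic input is the interior and boundary H\"older estimates for systems with homogeneous right-hand side and coefficients $\bar a_{\alpha\beta}(y_1)$ that are merely measurable in the single direction $y_1$; these are the auxiliary results promised in Section \ref{sec_aux}, together with the $L_2$-estimates. From the H\"older estimates one deduces, via a Campanato-type / dyadic decomposition argument, that the sharp function of suitable linear combinations of $D^m\vu$ is controlled, on each ball $B_r(x_0)$, by $\gamma$ times a maximal-type quantity plus the contributions of the $\vf_\alpha$ and the lower-order terms. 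The scaling is tracked through the parameter $\lambda$ by the usual substitution $\vu(x)\mapsto \vu(\mu x)$ with $\mu=\lambda^{-1/(2m)}$, so the weighted sum $\sum_{|\alpha|\le m}\lambda^{1-|\alpha|/(2m)}\|D^\alpha\vu\|_{L_p}$ appears naturally and lower-order terms are absorbed once $\lambda\ge\lambda_0$. Combining the mean oscillation estimate with the Fefferman--Stein sharp function theorem and the Hardy--Littlewood maximal function theorem on $\bR^d_+$, then choosing $\gamma$ small depending on $d,n,m,p,\delta$, yields part (i).

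For part (ii), with part (i) in hand the existence follows from the method of continuity: one connects $\cL$ to the constant-coefficient, lower-order-free operator $\cL_0$ (for which solvability of the conormal problem on $\bR^d_+$ is classical, e.g. via Fourier/Lax--Milgram on $W^m_2$ and then $L_p$ by the same estimate with $\gamma=0$), observes that the a priori estimate of part (i) holds uniformly along the segment $t\cL+(1-t)\cL_0$ since the hypothesis is verified uniformly, and concludes that $\lambda>\lambda_0$ gives a bijection $W^m_p(\Omega)\to (W^m_q(\Omega))^*$. Uniqueness is immediate from (i). The passage between $p$ and its conjugate via the weak formulation \eqref{eq3.02} is clean here because the conormal problem is self-adjoint in structure; I would exploit this duality to get solvability for all $p\in(1,\infty)$ from the $L_2$ case plus the a priori bound.

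For part (iii), when the lower-order coefficients vanish and the leading coefficients depend on $x_1$ only, the point is that Assumption \ref{assumption20100901}($\gamma$) is satisfied with $\cT_B=\mathrm{id}$, $R_0$ arbitrary, and in fact with $\gamma=0$: there is no need for the small-oscillation perturbation at all, and there is no genuine boundary obstruction since the measurable direction $x_1$ is exactly normal to $\partial\bR^d_+$. Consequently the mean oscillation estimate degenerates to a scale-invariant inequality with no additive $\gamma$-free error, and the same scaling argument runs with $\lambda_0=0$; equivalently, one proves the estimate directly for $\lambda>0$ and lets $\lambda\downarrow 0$ is not needed — the homogeneity of the model problem gives the bound uniformly down to $\lambda=0$. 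I would present this as a short corollary of the computations already done for (i), pointing out which error terms disappear.

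The main obstacle, and the place where genuinely new work over \cite{DK10} is required, is the boundary mean oscillation estimate for the conormal problem: in the Dirichlet case one used a cut-off plus a generalized Hardy inequality exploiting that $\vu$ vanishes on the boundary, which is unavailable here. The key device, as flagged in the introduction, is Lemma \ref{lem4.2}: on a half-ball whose flat boundary is perpendicular to the measurable direction, after modifying the right-hand side $\vf_\alpha$ by explicitly computable correction terms (built from $\vu$ and the coefficients), the \emph{same} function $\vu$ solves a conormal problem on that subdomain with flat perpendicular boundary, to which the flat-boundary H\"older estimate applies. Making this modification precise — checking that the corrected functionals still lie in the right dual space, that the corrections have the right size in $L_p$ to be absorbed, and that the slight tilt of the measurable direction (only $\rho_{11}\ge\cos(\gamma/2)$, not exact perpendicularity) can be straightened by a further small perturbation — is where I would spend most of the effort; once Lemma \ref{lem4.2} is available the rest of the boundary estimate parallels the interior one.
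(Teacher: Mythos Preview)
Your proposal is correct and follows essentially the same route as the paper: mean oscillation estimates built from the H\"older estimates for the model operator with coefficients $\bar a_{\alpha\beta}(y_1)$, Lemma \ref{lem4.2} as the key device to reduce the near-boundary case to a flat boundary perpendicular to the measurable direction, a Fefferman--Stein type theorem plus the Hardy--Littlewood maximal function to pass to $L_p$ for $p>2$, duality for $p\in(1,2)$, the method of continuity for existence, and scaling for (iii). One technical point worth flagging: since the measurable direction (and hence the auxiliary quantity $\Theta$) varies from ball to ball, the function $U^B=U'+|\Theta|$ whose oscillation is controlled genuinely depends on $B$, so the paper invokes the \emph{generalized} Fefferman--Stein theorem (Theorem \ref{th081201}) rather than the standard sharp-function version; your phrase ``sharp function of suitable linear combinations of $D^m\vu$'' is right in spirit but you should make sure your write-up accommodates this ball-dependence.
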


For elliptic systems on a Reifenberg flat domain which is possibly unbounded,  we impose a similar regularity
assumption on $a_{\alpha\beta}$ as in Assumption
\ref{assumption20100901}. Near the boundary, we require that in
each small scale the direction in which the coefficients are only
measurable coincides with the ``normal'' direction of a certain
thin disc, which contains a portion of $\partial\Omega$. More
precisely, we assume the following, where the parameter $\gamma\in
(0,1/50)$ will be determined later.
\begin{assumption}[$\gamma$]
                                        \label{assump1}
There is a constant $R_0\in (0,1]$ such that the following hold.

(i) For any $x\in \Omega$ and any $r\in
(0,R_0]$ such that $B_r(x)\subset
\Omega$, there is an orthogonal coordinate system depending on $x$ and $r$
such that in this new coordinate system, we have
\begin{equation}
                            \label{eq13.07}
\dashint_{B_r(x)}\Big| a_{\alpha\beta}(y_1, y') -
\dashint_{B'_r(x')} a_{\alpha\beta}(y_1,z') \, dz' \Big| \, dy\le
\gamma.
\end{equation}

(ii) The domain $\Omega$ is Reifenberg flat: for any $x\in \partial\Omega$ and $r\in
(0,R_0]$, there is an orthogonal coordinate system depending on $x$ and $r$
such that in this new coordinate system, we have \eqref{eq13.07}
and
\begin{equation}
                    \label{eq3.49}
 \{(y_1,y'):x_1+\gamma r<y_1\}\cap B_r(x)
 \subset\Omega_r(x)
 \subset \{(y_1,y'):x_1-\gamma r<y_1\}\cap B_r(x).
\end{equation}
\end{assumption}

In particular, if the boundary $\partial
\Omega$ is locally the graph of a Lipschitz continuous function
with a small Lipschitz constant, then $\Omega$ is Reifenberg flat.
Thus all $C^1$ domains are Reifenberg flat for any $\gamma>0$.

The next theorem is about the conormal derivative problem on a
Reifenberg flat domain.

\begin{theorem}[Systems on a Reifenberg flat domain]
                                \label{thm5}
Let $\Omega$ be a domain in $\bR^d$ and $p \in (1,\infty)$.
Then there exists a constant
$\gamma=\gamma(d,n,m,p,\delta)$ such that, under Assumption
\ref{assump1} ($\gamma$), the following hold true.

\noindent (i) Let $\vf_\alpha= (f_\alpha^1, \ldots, f_\alpha^n)^{\text{tr}} \in
L_p(\Omega)$, $|\alpha|\le m$. For any $\vu \in W^m_p(\Omega)$ satisfying
\begin{equation}
                                    \label{eq11_01}
(-1)^m \cL \vu +\lambda \vu = \sum_{|\alpha|\le m}D^\alpha
\vf_\alpha \quad \text{in}\quad \Omega
\end{equation}
with the conormal derivative condition on $\partial\Omega$, we
have
\begin{equation*}
\sum_{|\alpha|\le m}\lambda^{1-\frac {|\alpha|} {2m}} \|D^\alpha
\vu \|_{L_p(\Omega)} \le N \sum_{|\alpha|\le m}\lambda^{\frac
{|\alpha|} {2m}} \| \vf_\alpha \|_{L_p(\Omega)},
\end{equation*}
provided that $\lambda \ge \lambda_0$, where $N$ and $\lambda_0
\ge 0$ depend only on $d$, $n$, $m$, $p$, $\delta$, $K$, and
$R_0$.

\noindent (ii) For any  $\lambda > \lambda_0$ and $\vf_\alpha \in
L_p(\Omega)$, $|\alpha|\le m$, there exists a
unique solution $\vu \in W_p^m(\Omega)$ to \eqref{eq11_01} with
the conormal derivative boundary condition.
\end{theorem}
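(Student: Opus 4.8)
The proof follows the strategy of \cite{DK10}: the a priori estimate in (i) is reduced to pointwise mean oscillation (sharp function) estimates for $D^m\vu$, and then (ii) is deduced by a duality argument together with the method of continuity. By the interpolation inequalities in $W^m_p$, the lower-order coefficients can be absorbed into the right-hand side at the expense of enlarging $\lambda_0$, so we may assume $\cL$ has only leading coefficients. Two ingredients from Section \ref{sec_aux} are used repeatedly: (a) the $L_2$-theory, namely that for $\lambda$ large the form in \eqref{eq3.02} plus the $\lambda$-term is coercive on $W^m_2(\Omega)$ by \eqref{eq11.28}, so that \eqref{eq11_01} with the conormal condition is uniquely solvable in $W^m_2(\Omega)$ for \emph{any} bounded measurable coefficients; and (b) the interior and boundary H\"older estimates for the \emph{homogeneous} system with ``simple'' coefficients $\{\bar a_{\alpha\beta}\}\in\bA$ on half-balls centered on a flat boundary perpendicular to the $y_1$-axis, which yield a decay rate for the mean oscillation of a fixed linear combination $\Theta$ of the components of $D^m\vu$.

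The core of the argument, and where a new idea is needed, is the boundary mean oscillation estimate. Fix $x_0\in\partial\Omega$ and a small $r$; after the rotation furnished by Assumption \ref{assump1}, a portion of $\partial\Omega$ lies in the thin slab $\{\,|y_1-x_{0,1}|\le\gamma r\,\}$ by \eqref{eq3.49}, and the coefficients satisfy \eqref{eq13.07}, hence are $\gamma$-close in $L_1(B_r)$ to coefficients in $\bA$ (their averages over $y'$). For the Dirichlet problem, \cite{DK10} handled the slab by a cut-off plus a generalized Hardy inequality, which is unavailable here since conormal solutions need not vanish on $\partial\Omega$. Instead I would invoke Lemma \ref{lem4.2}: after adding to the right-hand side an explicit correction term supported near $\partial\Omega$ whose $L_2$-norm on $B_r^+(x_0)$ is bounded by $N\gamma$ times $\|D^m\vu\|_{L_2}$ on a slightly larger ball, $\vu$ itself satisfies a conormal problem on the \emph{genuinely flat} half-ball $B_r^+(x_0)$. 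One then writes $\vu=\vw+\vv$ on $B_r^+(x_0)$, with $\vw$ solving the homogeneous conormal problem there for the $y'$-averaged (hence simple) coefficients and matching conormal data, and $\vv$ the remainder. The $L_2$-estimate bounds $\|D^m\vv\|_{L_2(B_r^+(x_0))}$ by $\sum_{|\alpha|\le m}\|\vf_\alpha\|_{L_2}+N\gamma\|D^m\vu\|_{L_2}$ on a comparable ball, and the H\"older estimate of (b) applied to $\vw$ gives the mean oscillation decay of $\Theta(D^m\vw)$. Combining these, using \eqref{eq3.49} to pass between $\Omega_r(x_0)$ and $B_r^+(x_0)$ (their symmetric difference has measure $O(\gamma)|B_r|$), and using the approximation of Reifenberg domains from Section \ref{sec_aux} to reduce to the flat-boundary case, in the spirit of \cite{CaPe98}, I would obtain for all small $\kappa$
\[
\Big(\dashint_{\Omega_{\kappa r}(x_0)}\big|\Theta-(\Theta)_{\Omega_{\kappa r}(x_0)}\big|^2\,dx\Big)^{1/2}\le N\kappa^{\sigma}\Big(\dashint_{\Omega_{r}(x_0)}|D^m\vu|^2\,dx\Big)^{1/2}+N\kappa^{-d/2}\Big(\dashint_{\Omega_{r}(x_0)}\sum_{|\alpha|\le m}|\vf_\alpha|^2+\gamma|D^m\vu|^2\,dx\Big)^{1/2};
\]
the interior estimate is simpler and follows from (b) alone.

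Granting this, one chooses $\kappa$ with $N\kappa^\sigma<1/2$, bounds the sharp function $\Theta^{\#}$ on $\Omega$ pointwise by the maximal functions of $\big(\sum_{|\alpha|\le m}|\vf_\alpha|^2\big)^{1/2}$ and $\gamma^{1/2}|D^m\vu|$, and applies the Fefferman--Stein and Hardy--Littlewood theorems in $L_{p/2}(\Omega)$; this settles $p\ge 2$, and $1<p<2$ follows by duality against the conormal problem for the adjoint operator $\cL^{*}$, which satisfies the same hypotheses. Fixing $\gamma=\gamma(d,n,m,p,\delta)$ small absorbs the $\gamma^{1/2}\|D^m\vu\|_{L_p}$ term, and a dilation argument in $\lambda$ (as in \cite{DK10}) produces the estimate with the $\lambda$-weights for $\lambda\ge\lambda_0$ and reinstates the lower-order terms, proving (i). Uniqueness in (ii) is (i) applied to the difference of two solutions. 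For existence, given $\vf_\alpha\in L_p(\Omega)$ one approximates the data by elements of $L_2\cap L_p$ and the leading coefficients by smooth ones, solves in $W^m_2(\Omega)$ by (a), and uses (i) to show the approximate solutions lie in $W^m_p(\Omega)$ and converge there to a solution; the coefficient limit is handled as in \cite{DK10}, and the case $p<2$ is again treated by duality (alternatively, one runs the method of continuity within the class $\bA$).

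The main obstacle is the boundary mean oscillation estimate of the second paragraph: constructing the correction term of Lemma \ref{lem4.2} and verifying that, although the measurable direction is only \emph{approximately} normal and $\partial\Omega$ is merely Reifenberg flat (hence possibly fractal), every error produced by the thin slab and by the coefficient oscillation is dominated by $\gamma$ times $\|D^m\vu\|_{L_2}$ on a comparable ball, so that it can be absorbed once $\gamma$ is fixed small. The remaining steps are routine adaptations of \cite{DK10}.
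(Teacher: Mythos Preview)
Your outline captures the right skeleton (absorb lower-order terms, use Lemma \ref{lem4.2} to pass to a flat boundary, decompose into a homogeneous piece with H\"older control and an error piece with $L_2$ control, then $p<2$ by duality), but it departs from the paper at two technically important points.

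\medskip
\textbf{The $L_p$ estimate on a Reifenberg domain.} For the half-space the paper indeed uses the generalized Fefferman--Stein theorem (Theorem \ref{th081201}); for the Reifenberg domain it does \emph{not}. The sharp function machinery of Theorem \ref{th081201} is built on the dyadic filtration of $\bR^d_+$, and there is no analogous dyadic structure on a general Reifenberg flat $\Omega$, so your step ``apply Fefferman--Stein in $L_{p/2}(\Omega)$'' does not go through as written. Instead the paper recasts the decomposition $\vu=\vv+\vw$ as the pointwise bound $U\le V+W$ of Lemma \ref{lem7.3}, with $\|V\|_{L_\infty}$ controlled and $(W^2)^{1/2}$ small, and then runs a level-set (good-$\lambda$) argument: Corollary \ref{cor7.5} shows that if $|\Omega_{R/32}\cap\{U>\kappa s\}|$ exceeds $\gamma^{2/\nu'}|\Omega_{R/32}|$ then the whole ball lies in the ``bad'' set $\cB(s)$, and the ``crawling of ink drops'' lemma of \cite{Sa80,KS80} (this is the use of \cite{CaPe98} you alluded to, but applied to the $L_p$ estimate rather than to domain approximation) then yields $|\cA(\kappa s)|\le N\gamma^{2/\nu'}|\cB(s)|$. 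The $L_p$ bound follows from the layer-cake formula. Note also that the error terms carry $(U^\nu)^{1/\nu}$ with some $\nu\in(2,p)$, not $(U^2)^{1/2}$; this higher integrability is what makes the Chebyshev step in Corollary \ref{cor7.5} close.

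\medskip
\textbf{Existence in (ii).} Your plan approximates the data and the coefficients but leaves the domain untouched; this is the real gap. Even for the model operator $\Delta^m$ the $W^m_p$-solvability of the conormal problem on a raw Reifenberg domain is not classical, because $\partial\Omega$ may be fractal. The paper first approximates $\Omega$ from inside: Lemma \ref{lem3.11} shows $\Omega^\varepsilon=\{x\in\Omega:\dist(x,\partial\Omega)>\varepsilon\}$ is Lipschitz (and still Reifenberg with uniform parameters), and Lemma \ref{lem3.19} further approximates $\Omega^\varepsilon$ by smooth $\Omega^{\varepsilon,k}$. On $\Omega^{\varepsilon,k}$ classical theory gives a $W^m_p$ solution for large $\lambda$; the a priori estimate of (i), applied with constants independent of $k$ and $\varepsilon$, then produces the solution on $\Omega$ by two successive weak-limit arguments, combined with the method of continuity to reach all $\lambda>\lambda_0$. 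Your approximation of coefficients alone does not supply the missing classical solvability on the irregular domain.
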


For $\lambda=0$, we have the following solvability result for systems without lower-order terms on bounded domains.

\begin{corollary}
                                \label{cor7}
Let $\Omega$ be a bounded domain in $\bR^d$, and $p \in (1,\infty)$.
Assume that $a_{\alpha\beta}\equiv 0$ for any $\alpha,\beta$ satisfying $|\alpha|+|\beta|<2m$.
Then there exists a constant
$\gamma=\gamma(d,n,m,p,\delta)$ such that, under Assumption
\ref{assump1} ($\gamma$), for any $\vf_\alpha \in
L_p(\Omega)$, $|\alpha|= m$, there exists a solution $\vu \in
W_p^m(\Omega)$ to
\begin{equation}
                                    \label{eq22.34}
(-1)^m \cL \vu = \sum_{|\alpha|= m}D^\alpha
\vf_\alpha \quad \text{in}\quad \Omega
\end{equation}
with the conormal derivative
boundary condition, and $\vu$ satisfies
\begin{equation}
                                \label{eq23.08}
\|D^m \vu \|_{L_p(\Omega)} \le N \sum_{|\alpha|= m}\|\vf_\alpha
\|_{L_p(\Omega)},
\end{equation}
where $N$ depends only on $d$, $n$, $m$, $p$, $\delta$, $K$,
$R_0$, and $|\Omega|$.
Such a solution is unique up to a polynomial of order at most $m-1$.
\end{corollary}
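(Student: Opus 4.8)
The plan is to deduce Corollary \ref{cor7} from Theorem \ref{thm5} by a limiting argument in $\lambda \to 0^+$, exploiting the scale-invariance of the homogeneous ($|\alpha|=|\beta|=m$, no lower-order terms) estimate together with compactness coming from the boundedness of $\Omega$. First I would note that, under the stated hypotheses, the operator in Theorem \ref{thm5} reduces to $(-1)^m\cL\vu+\lambda\vu=\sum_{|\alpha|=m}D^\alpha\vf_\alpha$, and for each $\lambda>\lambda_0$ there is a unique $\vu_\lambda\in W^m_p(\Omega)$ solving it with the conormal condition, satisfying the a priori bound from part (i). The key point is that in the estimate $\sum_{|\alpha|\le m}\lambda^{1-|\alpha|/(2m)}\|D^\alpha\vu_\lambda\|_{L_p(\Omega)}\le N\sum_{|\alpha|=m}\|\vf_\alpha\|_{L_p(\Omega)}$, the right-hand side does not depend on $\lambda$ (only top-order $\vf_\alpha$ are present, so the factor $\lambda^{|\alpha|/(2m)}$ is absent or can be absorbed), and the constant $N$ is independent of $\lambda$. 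In particular $\|D^m\vu_\lambda\|_{L_p(\Omega)}\le N\sum_{|\alpha|=m}\|\vf_\alpha\|_{L_p(\Omega)}$ uniformly in $\lambda$.

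Next I would control the lower-order norms of $\vu_\lambda$ modulo polynomials. Since $\Omega$ is bounded, the Gagliardo--Nirenberg--Poincaré inequality gives $\inf_{P}\|\vu_\lambda-P\|_{W^m_p(\Omega)}\le N(|\Omega|)\|D^m\vu_\lambda\|_{L_p(\Omega)}$, where the infimum is over polynomials $P$ of degree at most $m-1$; replacing $\vu_\lambda$ by $\tilde\vu_\lambda:=\vu_\lambda-P_\lambda$ (which still solves the same system with the conormal condition, since differentiating a degree $\le m-1$ polynomial $m$ times kills it, and the coefficients of the reduced operator touch only $D^m$) we obtain a sequence $\tilde\vu_\lambda$ bounded in $W^m_p(\Omega)$ uniformly as $\lambda\downarrow 0$. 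By reflexivity of $W^m_p(\Omega)$ for $p\in(1,\infty)$, I extract a subsequence $\lambda_k\downarrow 0$ with $\tilde\vu_{\lambda_k}\rightharpoonup\vu$ weakly in $W^m_p(\Omega)$. Passing to the limit in the weak formulation \eqref{eq3.02} — the term $\lambda_k\int_\Omega\phi\cdot\tilde\vu_{\lambda_k}\,dx\to 0$ because $\lambda_k\to 0$ while $\|\tilde\vu_{\lambda_k}\|_{L_p}$ stays bounded, and the bilinear top-order term passes to the limit by weak convergence of $D^m\tilde\vu_{\lambda_k}$ against the fixed $L_q$ function $D^\alpha\phi$ times the bounded coefficients — shows that $\vu$ is a weak solution of \eqref{eq22.34} with the conormal boundary condition. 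The bound \eqref{eq23.08} then follows from weak lower semicontinuity of the norm: $\|D^m\vu\|_{L_p(\Omega)}\le\liminf_k\|D^m\tilde\vu_{\lambda_k}\|_{L_p(\Omega)}\le N\sum_{|\alpha|=m}\|\vf_\alpha\|_{L_p(\Omega)}$.

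Finally, for uniqueness up to polynomials of order at most $m-1$: if $\vu_1,\vu_2$ both solve \eqref{eq22.34} with the conormal condition and the same $\vf_\alpha$, then $\vw:=\vu_1-\vu_2\in W^m_p(\Omega)$ solves $(-1)^m\cL\vw=0$ with the homogeneous conormal condition, i.e. $\int_\Omega\sum_{|\alpha|=|\beta|=m}(-1)^{m+|\alpha|}D^\alpha\phi\cdot a_{\alpha\beta}D^\beta\vw\,dx=0$ for all $\phi\in W^m_q(\Omega)$. Applying the a priori estimate of Theorem \ref{thm5}(i) to $\vw$ with $\lambda$ any fixed value $>\lambda_0$ is not quite legitimate since $\vw$ solves the $\lambda=0$ equation, not the $\lambda$-equation; instead I would rewrite $(-1)^m\cL\vw+\lambda\vw=\lambda\vw$, view $\lambda\vw=\sum_{|\alpha|\le m}D^\alpha\vg_\alpha$ with $\vg_0=\lambda\vw$ and $\vg_\alpha=0$ otherwise, and apply the estimate to get $\lambda\|D^m\vw\|_{L_p}\le N\lambda^{1}\|\vw\|_{L_p}$, which is not immediately a contradiction — so a cleaner route is to run the same $\lambda\downarrow 0$ compactness on $\vw$ after subtracting its polynomial part: $\tilde\vw_\lambda$ (the unique $\lambda$-solution with right side $\lambda$-multiple zero, hence $\vw$ itself up to scaling) satisfies $\|D^m\tilde\vw\|_{L_p}\le N\cdot 0=0$ in the limit, forcing $D^m\vw\equiv 0$, hence $\vw$ is a polynomial of degree $\le m-1$. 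The main obstacle I anticipate is the bookkeeping in this uniqueness step — making the reduction of the $\lambda=0$ homogeneous problem to the framework of Theorem \ref{thm5} airtight, in particular verifying that the polynomial-subtracted difference is genuinely the unique $\lambda$-solution of a problem whose data tends to zero, so that the a priori estimate collapses it to a degree $\le m-1$ polynomial; everything else is the standard Poincaré-plus-weak-compactness machinery.
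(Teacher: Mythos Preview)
Your limiting argument $\lambda\to 0^+$ has a genuine gap: Theorem~\ref{thm5} furnishes solutions and estimates only for $\lambda\ge\lambda_0$, and $\lambda_0$ is in general strictly positive even when all lower-order coefficients vanish. The positivity of $\lambda_0$ comes from the partition-of-unity step in the proof of Theorem~\ref{thm5} (the commutators produce lower-order derivatives of $\vu$ that must be absorbed by a large $\lambda$), not from the lower-order coefficients of $\cL$. So you simply do not have a family $\vu_\lambda$ indexed by $\lambda$ arbitrarily close to $0$, and the passage ``$\lambda_k\int_\Omega\phi\cdot\tilde\vu_{\lambda_k}\,dx\to 0$ because $\lambda_k\to 0$'' is unavailable. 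A secondary issue: when you replace $\vu_\lambda$ by $\tilde\vu_\lambda=\vu_\lambda-P_\lambda$, the function $\tilde\vu_\lambda$ does \emph{not} satisfy the same equation; the zero-order term $\lambda\vu_\lambda$ becomes $\lambda\tilde\vu_\lambda+\lambda P_\lambda$, so an extra $-\lambda P_\lambda$ lands on the right-hand side. This is fixable (one can check $\lambda\|P_\lambda\|_{L_p}\to 0$), but the first gap is not.

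The paper proceeds quite differently. For $p=2$ it applies Lax--Milgram on the closed subspace $H=\{\vu\in W^m_2(\Omega):(D^j\vu)_\Omega=0,\ 0\le j\le m-1\}$, obtaining both existence and the estimate directly, and then observes that any $W^m_2$ test function decomposes as an element of $H$ plus a polynomial of degree $\le m-1$, so the $H$-solution is a genuine conormal solution. For $p>2$ it does not send $\lambda\to 0$; instead it \emph{fixes} $\lambda=\lambda_0+1$, takes the $L_2$-solution $\vu$ (available since $\Omega$ is bounded and $\vf_\alpha\in L_p\subset L_2$), and solves $(-1)^m\cL\vv+(\lambda_0+1)\vv=\sum D^\alpha\vf_\alpha+(\lambda_0+1)\vu$ in $W^m_p$ via Theorem~\ref{thm5}. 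Sobolev embedding controls $\|\vu\|_{L_p}$ by $\|\vu\|_{W^1_2}$, and uniqueness in Theorem~\ref{thm5} forces $\vv=\vu$, upgrading $\vu$ to $W^m_p$; a bootstrap handles the full range $p>2$. For $p\in(1,2)$ the a~priori estimate comes from duality with the $p>2$ solvability, and existence by approximating $\vf_\alpha$ with bounded data. Uniqueness in all cases is reduced (again by bootstrap when $p<2$) to the $W^m_2$ case, where the Lax--Milgram argument identifies the kernel as polynomials of degree $\le m-1$. Your uniqueness sketch does not close for essentially the same reason as the existence: you cannot invoke Theorem~\ref{thm5} at $\lambda=0$, and rewriting $(-1)^m\cL\vw+\lambda\vw=\lambda\vw$ yields only $\|D^m\vw\|_{L_p}\le N\|\vw\|_{L_p}$, which is not a contradiction.
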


Finally, we present a result for second-order scalar elliptic equations in the form
\begin{equation}
                            \label{eq1005_2}
D_i(a_{ij}D_j u)+D_i(a_i u)+b_iD_i u+cu=\Div g+f\quad \text{in}\,\,\Omega
\end{equation}
with the conormal derivative boundary condition.
The result generalizes Theorem 5 of \cite{DongKim08a}, in which bounded Lipschitz domains with small Lipschitz constants are considered. It also extends the main result of \cite{BW05} to equations with lower-order terms and with leading coefficients in a more general class.
In the theorem below we assume that all the coefficients are bounded and measurable, and $a_{ij}$ satisfies \eqref{eq11.28} with $m=1$. As usual, we say that $D_i a_i+c\le 0$ in $\Omega$ holds in the weak sense if
$$
\int_\Omega(-a_i D_i\phi+c\phi)\,dx\le 0
$$
for any nonnegative $\phi\in C_0^\infty(\Omega)$. By Assumption ($\text{H}$) we mean that
$$
\int_\Omega(-a_i D_i\phi+c\phi)\,dx=0\quad \forall \phi\in C^\infty(\overline{\Omega}).
$$
Similarly, Assumption ($\text{H}^*$) is satisfied if
$$
\int_\Omega(b_i D_i\phi+c\phi)\,dx=0\quad \forall \phi\in C^\infty(\overline{\Omega}).
$$

\begin{theorem}[Scalar equations on a bounded domain]
                                        \label{thmB}
Let $p\in (1,\infty)$ and $\Omega$ be a bounded domain. Assume $D_ia_i+c\le 0$ in $\Omega$ in the weak sense. Then there exists a constant $\gamma=\gamma(d,p,\delta)$  such that, under Assumption \ref{assump1} ($\gamma$), the following hold true.

\noindent
(i) If Assumption ($\text{H}$) is satisfied, then for any $f$, $g = (g_1, \cdots, g_d) \in L_{p}(\Omega)$, the equation \eqref{eq1005_2} has a unique up to a constant solution $u\in W^1_p(\Omega)$ provided that Assumption ($\text{H}^*$) is also satisfied.
Moreover, we have
\begin{equation*}
\|Du\|_{L_p(\Omega)}\le N\|f\|_{L_p(\Omega)}+N\|g\|_{L_p(\Omega)}.
\end{equation*}

\noindent
(ii) If Assumption ($\text{H}$) is not satisfied, the solution is unique and we have
\begin{equation*}
\|u\|_{W^1_p(\Omega)}\le N\|f\|_{L_p(\Omega)}+N\|g\|_{L_p(\Omega)}.
\end{equation*}
The constants $N$ are independent of $f$, $g$, and $u$.
\end{theorem}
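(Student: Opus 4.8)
The plan is to reduce the scalar equation \eqref{eq1005_2} to the higher-order framework already established in Theorem \ref{thm5} (with $m=1$), and then deal with the lower-order terms and the solvability obstructions by a compactness/Fredholm argument. First I would observe that the leading part $D_i(a_{ij}D_ju)$ together with the ellipticity \eqref{eq11.28} for $m=1$ and Assumption \ref{assump1} ($\gamma$) fits exactly the hypotheses of Theorem \ref{thm5}. So for $\lambda$ large the operator $D_i(a_{ij}D_j\cdot)+D_i(a_i\cdot)+b_iD_i\cdot+c\cdot-\lambda$ is invertible from $W^1_p(\Omega)$ to $(W^1_q(\Omega))^*$, with the estimate from Theorem \ref{thm5}(i), since the lower-order terms $D_i(a_iu)$, $b_iD_iu$, $cu$ can be absorbed: $\|D_i(a_iu)\|\le N\|u\|_{L_p}$, $\|b_iD_iu+cu\|\le N\|u\|_{W^1_p}$, and these are controlled via interpolation and the $\lambda^{1-|\alpha|/2m}$ weights once $\lambda\ge\lambda_1$ for a suitable $\lambda_1=\lambda_1(d,p,\delta,K)$. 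This gives part (ii) once we know the solution (if it exists for $\lambda=0$) is unique.

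Next I would handle uniqueness. For part (ii), suppose $u\in W^1_p(\Omega)$ solves the homogeneous equation with the conormal condition. When $p\ge 2$, test with $\phi=\bar u$ (or the real/imaginary parts); the hypothesis $D_ia_i+c\le 0$ in the weak sense, applied with $\phi=|u|^2$-type test functions after a standard truncation, forces $Du\equiv 0$, hence $u$ is a constant, and then the constant must vanish unless Assumption (H) holds (testing the equation with $\phi\equiv 1$ and using that (H) fails means $\int_\Omega(-a_iD_i\cdot+c\cdot)$ does not annihilate constants, forcing the constant to be $0$). For general $p\in(1,\infty)$, first establish that any $W^1_p$ solution is in fact in $W^1_2$ by a bootstrap: on a bounded domain $L_p\hookrightarrow L_{p\wedge 2}$ wait — rather, use the $W^1_p$ estimate in both directions together with the fact that the solution of the homogeneous problem lies in $W^1_{p'}$ for all $p'$ by the a priori estimate applied with right-hand side $0$ after localizing; this is the usual duality argument showing $L_p$-solvability is independent of $p$. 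Then uniqueness in $W^1_2$ transfers.

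For part (i), i.e. $\lambda=0$ with Assumptions (H) and (H$^*$), I would argue by a Fredholm alternative. From Theorem \ref{thm5} with $\lambda=\lambda_1$ the operator $\mathcal{L}_0:=D_i(a_{ij}D_j\cdot)+D_i(a_i\cdot)+b_iD_i\cdot+c\cdot$ differs from the invertible operator $\mathcal{L}_0-\lambda_1$ by the compact perturbation $+\lambda_1 I$ (compact by Rellich, since $W^1_p(\Omega)\hookrightarrow\hookrightarrow L_p(\Omega)$ on the bounded domain $\Omega$). Hence $\mathcal{L}_0$ is Fredholm of index $0$ as a map $W^1_p(\Omega)\to(W^1_q(\Omega))^*$. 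Its kernel consists of constants under Assumption (H) (by the part-(ii) uniqueness analysis: $D_ia_i+c\le 0$ gives $Du=0$, and (H) makes constants admissible), so the kernel is one-dimensional. By index $0$ the cokernel is also one-dimensional; I would identify it as the span of the functional $\phi\mapsto\int_\Omega\phi\,dx$ restricted appropriately — more precisely the solvability condition is that the right-hand side $\vF:\phi\mapsto\int_\Omega(-g_iD_i\phi+f\phi)\,dx$ annihilate the kernel of the adjoint operator $\mathcal{L}_0^*$, which under Assumption (H$^*$) is spanned by the constant function; and indeed $\vF(1)=\int_\Omega f\,dx$ is not automatically zero, so one subtracts the mean: replace $f$ by $f-(f)_\Omega$, which is legitimate because adding a constant to $f$ corresponds to adding a multiple of the functional $\phi\mapsto\int\phi$, which lies in the range precisely when (H$^*$) fails — hmm, here I must be careful, and this is the main obstacle. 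The key point to get right is the exact matching of the weak formulation \eqref{eq3.02} with the adjoint problem: because the formulation is ``associated with $\vf_\alpha$'', the functional $\vF$ is the given data, and the compatibility condition is $\vF(v)=0$ for all $v$ in the (finite-dimensional) kernel of the adjoint conormal problem, which Assumption (H$^*$) pins down to be constants. Granting this, existence of $u$ follows, uniqueness up to a constant is the kernel description, and the estimate $\|Du\|_{L_p(\Omega)}\le N\|f\|_{L_p(\Omega)}+N\|g\|_{L_p(\Omega)}$ follows from the open mapping theorem on the quotient space $W^1_p(\Omega)/\{\text{constants}\}$, or alternatively by a standard contradiction-compactness argument: if it failed there would be $u_k$ with $\|Du_k\|_{L_p}=1$, $(u_k)_\Omega=0$, and $f_k,g_k\to0$ in $L_p$; by Rellich a subsequence of $u_k$ converges in $L_p$, the limit solves the homogeneous problem with zero mean hence is $0$, contradicting $\|Du_k\|_{L_p}=1$ after passing to the limit in the a priori estimate of part (ii)-type with a gained compactness. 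The hard part, as indicated, is bookkeeping the adjoint/compatibility conditions so that Assumptions (H) and (H$^*$) land exactly on ``kernel $=$ constants'' for $\mathcal{L}_0$ and $\mathcal{L}_0^*$ respectively; everything else is the by-now-standard Fredholm-plus-Rellich machinery on bounded domains combined with the $W^1_p$ a priori estimate supplied by Theorem \ref{thm5}.
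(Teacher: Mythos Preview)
Your approach is a valid alternative, but it is genuinely different from the paper's. The paper's proof is extremely brief: it invokes the classical $W^1_2$ theory for the conormal problem with lower-order terms (Theorem~13 of \cite{DongKim08a}, where the role of the sign condition $D_ia_i+c\le 0$ and of Assumptions~($\text{H}$), ($\text{H}^*$) is already worked out in the $L_2$ setting), and then extends to $p\neq 2$ by exactly the bootstrap-and-duality pattern of Corollary~\ref{cor7}: for $p>2$, take the $W^1_2$ solution $u$, apply Theorem~\ref{thm5} with $\lambda=\lambda_0+1$ to the equation $\mathcal{L}v+(\lambda_0+1)v=\text{div}\,g+f+(\lambda_0+1)u$, use Sobolev embedding to feed $u$ back in at a higher integrability, and iterate; for $p<2$, obtain the a~priori estimate by duality against the $q=p/(p-1)>2$ solvability just proved, and construct the solution by truncating the data. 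Your Fredholm route---Theorem~\ref{thm5} gives invertibility of $\mathcal{L}-\lambda_1$ for large $\lambda_1$, Rellich on the bounded $\Omega$ makes $+\lambda_1 I$ compact, hence $\mathcal{L}$ has index zero, and then ($\text{H}$), ($\text{H}^*$), together with $D_ia_i+c\le 0$ pin down the kernels---is also sound and arguably more self-contained. What the paper's approach buys is that all of the delicate work (verifying that the kernel is exactly the constants, that the adjoint kernel is exactly the constants, and that the compatibility condition is met) has already been done in the cited $L_2$ result, so the paper never has to reopen it; this is precisely the ``bookkeeping of the adjoint/compatibility conditions'' that you correctly flag as your main obstacle. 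If you complete that bookkeeping cleanly---in particular, the step of showing that a $W^1_p$ null-solution with $p<2$ actually lies in $W^1_2$ so that the energy/testing argument applies, which you only sketched---your argument goes through.
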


\section{Some auxiliary estimates} \label{sec_aux}

In this section we consider operators without lower-order terms. Denote
$$
\cL_0 \vu = \sum_{|\alpha|=|\beta|=m}D^\alpha( a_{\alpha\beta} D^\beta \vu).
$$

\subsection{$L_2$-estimates}                    \label{sec3.1}
The following $L_2$-estimate for elliptic operators in divergence form with measurable coefficients is classical. We give a sketched proof for the sake of completeness.

\begin{theorem}         \label{theorem08061901}
Let $\Omega = \bR^d$ or $\bR^d_+$.
There exists $N = N(d,m,n, \delta)$
such that, for any $\lambda \ge 0$,
\begin{equation}
                                \label{eq2010_01}
\sum_{|\alpha|\le m}\lambda^{1-\frac {|\alpha|} {2m}} \|D^\alpha \vu \|_{L_2(\Omega)}
\le N \sum_{|\alpha|\le m}\lambda^{\frac {|\alpha|} {2m}} \| \vf_\alpha \|_{L_2(\Omega)},
\end{equation}
provided that $\vu \in W_2^m(\Omega)$ and $\vf_\alpha \in L_2(\Omega)$, $|\alpha|\le m$, satisfy
\begin{equation}                             \label{eq080501}
(-1)^m\cL_0 \vu + \lambda \vu = \sum_{|\alpha|\le m}D^\alpha \vf_\alpha
\end{equation}
in $\Omega$ with the conormal derivative condition on $\partial\Omega$.
Furthermore, for any $\lambda > 0$ and $\vf_\alpha \in L_2(\Omega),|\alpha|\le m$, there exists a unique solution $\vu\in W_2^m(\Omega)$ to
the equation \eqref{eq080501} in $\Omega$ with the conormal derivative boundary condition.
\end{theorem}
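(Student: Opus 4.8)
The plan is to use the Lax--Milgram lemma together with the Gårding-type inequality coming from the ellipticity assumption \eqref{eq11.28}. First I would set $q=p/(p-1)=2$ (so the test space coincides with the solution space) and define on $W^m_2(\Omega)$ the sesquilinear form
\begin{equation*}
\mathcal{B}[\vu,\phi]=\int_\Omega\sum_{|\alpha|=|\beta|=m}(-1)^{m+|\alpha|}D^\alpha\phi\cdot a_{\alpha\beta}D^\beta\vu+\lambda\,\phi\cdot\vu\,dx,
\end{equation*}
which by the boundedness of the coefficients is continuous on $W^m_2(\Omega)\times W^m_2(\Omega)$. The key point is coercivity: from \eqref{eq11.28}, after accounting for the factor $(-1)^{m+|\alpha|}$ (which is exactly the sign produced by integrating by parts $m$ times in the weak formulation \eqref{eq3.02}), one gets
\begin{equation*}
\Re\,\mathcal{B}[\vu,\vu]\ge\delta\|D^m\vu\|_{L_2(\Omega)}^2+\lambda\|\vu\|_{L_2(\Omega)}^2.
\end{equation*}
When $\lambda>0$ this already dominates $\min(\delta,\lambda)\|\vu\|^2$ for the norm built only from $\|D^m\vu\|_{L_2}$ and $\|\vu\|_{L_2}$; since the right-hand side of \eqref{eq080501} defines a bounded antilinear functional $\phi\mapsto\sum_{|\alpha|\le m}\int_\Omega(-1)^{|\alpha|}D^\alpha\phi\cdot\vf_\alpha\,dx$ on $W^m_2(\Omega)$, Lax--Milgram yields a unique $\vu\in W^m_2(\Omega)$ solving \eqref{eq080501} with the conormal condition. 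This proves the existence-and-uniqueness clause for $\lambda>0$.

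For the a priori estimate \eqref{eq2010_01}, I would first dispose of the intermediate derivatives by interpolation: for $0<|\alpha|<m$ one has $\|D^\alpha\vu\|_{L_2}\le\varepsilon\|D^m\vu\|_{L_2}+N(\varepsilon)\|\vu\|_{L_2}$, so it suffices to bound $\lambda\|\vu\|_{L_2}^2+\|D^m\vu\|_{L_2}^2$ and then absorb. Testing \eqref{eq080501} against $\phi=\vu$ and using coercivity on the left and Cauchy--Schwarz on the right gives
\begin{equation*}
\delta\|D^m\vu\|_{L_2}^2+\lambda\|\vu\|_{L_2}^2\le\sum_{|\alpha|\le m}\|\vf_\alpha\|_{L_2}\|D^\alpha\vu\|_{L_2}.
\end{equation*}
On the right I would split the sum into the top-order term (handled by Young's inequality against $\|D^m\vu\|_{L_2}^2$ with a small constant) and the lower-order terms $\|\vf_\alpha\|_{L_2}\|D^\alpha\vu\|_{L_2}$, which after the interpolation inequality above and Young's inequality are bounded by $\varepsilon\|D^m\vu\|_{L_2}^2+\varepsilon\lambda\|\vu\|_{L_2}^2+N\sum_{|\alpha|\le m}\lambda^{|\alpha|/m}\|\vf_\alpha\|_{L_2}^2$ (the powers of $\lambda$ tracked by the scaling weight $\lambda^{|\alpha|/2m}$). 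Choosing $\varepsilon$ small and absorbing the $\|D^m\vu\|_{L_2}^2$ and $\lambda\|\vu\|_{L_2}^2$ terms into the left-hand side yields $\|D^m\vu\|_{L_2}^2+\lambda\|\vu\|_{L_2}^2\le N\sum_{|\alpha|\le m}\lambda^{|\alpha|/m}\|\vf_\alpha\|_{L_2}^2$, and then reinstating the intermediate terms via interpolation (and taking square roots) gives precisely \eqref{eq2010_01}, including the case $\lambda=0$ once one observes that for $\lambda=0$ the weights reduce to the top-order estimate $\|D^m\vu\|_{L_2}\le N\|\vf_m\|_{L_2}$ directly from the energy identity with only $|\alpha|=m$ terms on the right (and more generally the inequality is read with all $\lambda^0$ weights).

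The main obstacle, such as it is in this classical statement, is bookkeeping rather than conceptual: one must be careful that the sign $(-1)^{m+|\alpha|}$ appearing in \eqref{eq3.02} is exactly what turns the ellipticity condition \eqref{eq11.28}---which is stated for the bilinear pairing $\sum a_{\alpha\beta}(\xi_\beta,\xi_\alpha)$---into coercivity of the form $\mathcal{B}$, so that no spurious sign spoils the Gårding inequality; and one must track the powers of $\lambda$ consistently through the interpolation and Young steps so that the final constants depend only on $d,m,n,\delta$ and not on $\lambda$. A minor additional point is the $\lambda=0$ existence question, which is not asserted here (the theorem only claims existence for $\lambda>0$), so nothing needs to be done there beyond the a priori bound. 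Since $\Omega$ is all of $\bR^d$ or $\bR^d_+$ there are no issues with the domain geometry, and $W^m_2(\Omega)$ is the natural Hilbert space in which Lax--Milgram applies directly.
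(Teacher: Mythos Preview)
Your approach is essentially the same as the paper's: test the equation against $\vu$, use the ellipticity \eqref{eq11.28} to get $\delta\|D^m\vu\|_{L_2}^2+\lambda\|\vu\|_{L_2}^2$ on the left, then apply Young's inequality with $\lambda$-weights together with the interpolation inequality $\lambda^{1-|\alpha|/m}\|D^\alpha\vu\|_{L_2}^2\le N(\|D^m\vu\|_{L_2}^2+\lambda\|\vu\|_{L_2}^2)$ to absorb; existence for $\lambda>0$ then follows (the paper phrases this via the method of continuity rather than Lax--Milgram, but these are interchangeable here). One small slip in your bookkeeping: the correct weight on the right after Young's inequality is $\lambda^{|\alpha|/m-1}\|\vf_\alpha\|_{L_2}^2$, not $\lambda^{|\alpha|/m}\|\vf_\alpha\|_{L_2}^2$---this is exactly what is needed so that, after taking square roots and multiplying through by $\lambda^{1/2}$, you recover the stated exponents in \eqref{eq2010_01}.
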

\begin{proof}
By the method of continuity and a standard density argument, it suffices to prove the estimate \eqref{eq2010_01} for $\vu \in C^{\infty}(\overline{\Omega})\cap W_2^m(\Omega)$.
From the equation, it follows that
\begin{equation*}
\int_{\Omega} \left[ (D^{\alpha}\vu, a_{\alpha\beta}D^{\beta}\vu)
+ \lambda |\vu|^2 \right]\,dx
= \sum_{|\alpha|\le m} (-1)^{|\alpha|} \int_{\Omega} (D^{\alpha}\vu, \vf_{\alpha}) \, dx.
\end{equation*}
By the uniform ellipticity \eqref{eq11.28}, we get
$$
\delta \int_{\Omega} |D^m\vu|^2 \, dx
\le \int_{\Omega} \Re(a_{\alpha\beta} D^\beta \vu, D^\alpha \vu) \, dx.
$$
Hence, for any $\varepsilon>0$,
\begin{align*}
&\delta \int_{\Omega} |D^m\vu|^2 \, dx+ \lambda \int_{\Omega} |\vu|^2 \, dx
\le \sum_{|\alpha|\le m}(-1)^{|\alpha|} \int_{\Omega}\Re(D^\alpha \vu, \vf_\alpha) \, dx\\
&\le \varepsilon \sum_{|\alpha|\le m}\lambda^{\frac {m-|\alpha|} m} \int_{\Omega} |D^\alpha \vu|^2 \, dx  + N\varepsilon^{-1} \sum_{|\alpha|\le m}\lambda^{-\frac {m-|\alpha|} m}\int_{\Omega} |\vf_\alpha|^2 \, dx.
\end{align*}
To finish the proof, it suffices to use interpolation inequalities
and choose $\varepsilon$ sufficiently small depending on $\delta$, $d$, $m$, and $n$.
\end{proof}

We say that a function $\vu\in W_p(\Omega)$ satisfies \eqref{eq9.15} with the conormal derivative condition on $\Gamma\subset \partial\Omega$ if $u$ satisfies \eqref{eq3.02} for any $\phi\in W^m_q(\Omega)$ which is supported on $\Omega\cup \Gamma$.

By Theorem \ref{theorem08061901} and adapting the proofs of Lemmas 3.2 and 7.2 in \cite{DK10} to the conormal case, we have the following local $L_2$-estimate.

\begin{lemma}
                                            \label{lem6.2}
Let $0<r<R<\infty$. Assume $\vu\in C_{\text{loc}}^\infty(\overline{\bR^{d}_+})$
satisfies
\begin{equation}
                    \label{eq2.54}
\cL_0 \vu=0
\end{equation}
in $B_{R}^+$ with the conormal derivative boundary condition on $\Gamma_R$. Then there exists a constant $N=N(d,m,n,\delta)$ such that for $j=1,\ldots,m$,
$$
\|D^j\vu\|_{L_2(B_r^+)}\leq N(R-r)^{-j}\|\vu\|_{L_2(B_R^+)}.
$$
\end{lemma}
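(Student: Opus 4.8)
The statement is the standard Caccioppoli-type iterated interior/boundary estimate: bound all derivatives $D^j\vu$, $1\le j\le m$, on a smaller half-ball $B_r^+$ in terms of $\|\vu\|_{L_2(B_R^+)}$, knowing only that $\cL_0\vu=0$ with the conormal condition on $\Gamma_R$. The natural strategy is to first establish the top-order estimate $\|D^m\vu\|_{L_2(B_r^+)}\le N(R-r)^{-m}\|\vu\|_{L_2(B_R^+)}$ and then to recover the intermediate derivatives $j=1,\ldots,m-1$ by interpolation. For the top-order estimate, I would introduce a smooth cutoff $\eta$ with $\eta=1$ on $B_r^+$, $\eta=0$ outside $B_{(r+R)/2}^+$, and $|D^k\eta|\le N(R-r)^{-k}$; then test the weak formulation \eqref{eq3.02} (with $\lambda=0$, $\vf_\alpha=0$, restricted to $\Gamma_R$) against $\phi=\eta^{2m}\vu$, which is a legitimate test function because it is supported in $\bR^d_+\cup\Gamma_R$ and conormal boundary conditions place no constraint on $\phi$ at $\Gamma_R$. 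Expanding $D^\alpha(\eta^{2m}\vu)$ by the Leibniz rule produces the term $\eta^{2m}D^\alpha\vu$ plus lower-order terms carrying at least one derivative of $\eta$; using ellipticity \eqref{eq11.28} on the principal term and Cauchy–Schwarz with a small parameter $\varepsilon$ on the remainder, one absorbs the top-order pieces and is left with $\|\eta^{2m}D^m\vu\|_{L_2}^2\le N\sum_{j<m}(R-r)^{-2(m-j)}\|D^j\vu\|_{L_2(B_{(r+R)/2}^+)}^2$.

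**Closing the induction and handling intermediate orders.** The right-hand side still involves lower-order derivatives of $\vu$, so I would not try to absorb them directly; instead I would invoke the standard interpolation inequality $\|D^j v\|_{L_2}\le \varepsilon\|D^m v\|_{L_2}+N\varepsilon^{-j/(m-j)}\|v\|_{L_2}$ applied on a suitable intermediate half-ball, combined with a finite iteration over a chain of radii $r=r_0<r_1<\cdots<r_m=R$ (a Widman-type ``hole-filling''/iteration argument) to convert the estimate on $\|D^m\vu\|_{L_2(B_r^+)}$ into one controlled purely by $\|\vu\|_{L_2(B_R^+)}$ with the correct power $(R-r)^{-m}$. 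Once $\|D^m\vu\|_{L_2(B_{r'}^+)}\le N(R-r')^{-m}\|\vu\|_{L_2(B_R^+)}$ is known for all $r<r'<R$, the bounds for $j=1,\ldots,m-1$ follow by the same interpolation inequality on $B_r^+$ versus $B_{(r+R)/2}^+$, yielding $\|D^j\vu\|_{L_2(B_r^+)}\le N(R-r)^{-j}\|\vu\|_{L_2(B_R^+)}$. Throughout, the smoothness $\vu\in C_{\mathrm{loc}}^\infty(\overline{\bR^d_+})$ lets me differentiate and integrate by parts freely, and all test functions automatically satisfy the support condition at $\Gamma_R$, so the conormal boundary condition is used exactly once, in legitimizing $\phi=\eta^{2m}\vu$ with no boundary terms.

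**Main obstacle.** The delicate point is the Leibniz-rule bookkeeping for a general order-$2m$ operator: each principal term $D^\alpha\phi\cdot a_{\alpha\beta}D^\beta\vu$ with $|\alpha|=|\beta|=m$ and $\phi=\eta^{2m}\vu$ expands into many terms, and I must check that after using ellipticity to keep $\int \eta^{2m}|D^m\vu|^2$, every surviving cross term genuinely carries at least one derivative of $\eta$ (so it has a factor $(R-r)^{-1}$) and at most $m$ derivatives on $\vu$ in a form amenable to Cauchy–Schwarz with absorption. The cleanest route, which is presumably what the authors have in mind by ``adapting the proofs of Lemmas 3.2 and 7.2 in \cite{DK10},'' is to quote those arguments almost verbatim: the only change from the Dirichlet case treated there is that here $\eta^{2m}\vu$ rather than a cutoff of a difference quotient is tested, and that no boundary term appears on $\Gamma_R$ precisely because we are in the conormal setting. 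So in the write-up I would state the cutoff, indicate the test function, point to the Leibniz expansion and the absorption, cite Theorem \ref{theorem08061901} and Lemmas 3.2 and 7.2 of \cite{DK10} for the routine details, and then dispatch the intermediate orders by interpolation.
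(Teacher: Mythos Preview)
Your proposal is correct and is essentially the approach the paper has in mind: the paper's entire proof is the one-line reference ``By Theorem \ref{theorem08061901} and adapting the proofs of Lemmas 3.2 and 7.2 in \cite{DK10} to the conormal case,'' and what you have outlined is precisely that adaptation---a cutoff/Caccioppoli argument where the only change from the Dirichlet setting is that $\phi=\eta^{2m}\vu$ is automatically admissible on $\Gamma_R$. One minor remark: the explicit invocation of Theorem \ref{theorem08061901} in the paper suggests that in \cite{DK10} the argument may be organized slightly differently, by writing down the equation satisfied by $\eta\vu$ (or $\eta^m\vu$) globally on $\bR^d_+$ with commutator right-hand side and then applying the global $L_2$ estimate, rather than testing the local weak formulation directly; but this is only a cosmetic repackaging of the same computation, and your version is equally valid.
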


\begin{corollary}
                                    \label{cor6.3}
Let $0<r<R<\infty$ and $a_{\alpha\beta}=a_{\alpha\beta}(x_1)$, $|\alpha|=|\beta|=m$. Assume that $\vu\in C_{\text{loc}}^\infty(\overline{\bR^{d}_+})$ satisfies \eqref{eq2.54}
in $B_R^+$ with the conormal derivative boundary condition on $\Gamma_R$. Then for any multi-index $\theta$ satisfying $\theta_1\le m$ and $|\theta|\ge m$, we have
\begin{equation*}
\|D^\theta\vu\|_{L_2(B_r^+)}\le N\|D^m\vu\|_{L_2(B_R^+)},
\end{equation*}
where $N=N(d,m,n,\delta, R, r, \theta)$.
\end{corollary}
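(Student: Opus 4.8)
The plan is to bootstrap from Lemma \ref{lem6.2} by differentiating the system in the directions tangential to the boundary, i.e., the $x'=(x_2,\dots,x_d)$ directions. Since the coefficients $a_{\alpha\beta}$ depend only on $x_1$, tangential difference quotients (and hence tangential derivatives $D_{x'}^{\theta'}\vu$ for any multi-index $\theta'$ with $\theta'_1=0$) again solve $\cL_0 (D_{x'}^{\theta'}\vu)=0$ in $B_R^+$; moreover, because the weak formulation \eqref{eq3.02} only involves tangential integration by parts against the test function, the conormal derivative boundary condition on $\Gamma_R$ is preserved under $D_{x'}^{\theta'}$. Here I would use the interior-in-$x'$ nature of difference quotients: on a slightly smaller half-ball they are controlled by $D_{x'}\vu$ on the larger half-ball, and $\vu\in C^\infty_{\text{loc}}(\overline{\bR^d_+})$ makes all of this rigorous without boundary terms from the $x_1$-variable. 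Iterating Lemma \ref{lem6.2} applied to $D_{x'}^{\theta'}\vu$ on a chain of nested half-balls $B_r^+\subset\cdots\subset B_R^+$ then yields
\begin{equation*}
\|D^j D_{x'}^{\theta'}\vu\|_{L_2(B_r^+)}\le N\|D^m\vu\|_{L_2(B_R^+)}
\end{equation*}
for all $j=0,\dots,m$ and all tangential $\theta'$, where at the last stage one also uses a Poincaré/interpolation step to pass from $\|D^m\vu\|$ on the larger ball down from $\|\vu\|$ if needed, or simply absorbs the lower-order $\|\vu\|$ terms using the already-known $L_2$ bound $\|\vu\|_{L_2(B_R^+)}\le N\|D^m\vu\|_{L_2(B_{R'}^+)}$ coming from the conormal structure (constants vanish against the equation in the homogeneous case only up to the stated order, so one keeps $\|D^m\vu\|$ on the right throughout).

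The remaining point is to convert control of tangential derivatives of $\vu$ into control of $D^\theta\vu$ for a general $\theta$ with $\theta_1\le m$ and $|\theta|\ge m$. Write $\theta=(\theta_1,\theta')$ with $\theta_1\le m$. The idea is to use the equation itself to trade $x_1$-derivatives beyond order $m$... but here $\theta_1\le m$, so in fact every such $D^\theta\vu$ is obtained by applying at most $m$ derivatives in $x_1$ and arbitrarily many in $x'$. So $D^\theta\vu = D_1^{\theta_1}D_{x'}^{\theta'}\vu$, and since $D_{x'}^{\theta'}\vu$ solves the same homogeneous system with the conormal condition (by the argument above), Lemma \ref{lem6.2} applied to it with $j=\theta_1\le m$ gives exactly
\begin{equation*}
\|D^\theta\vu\|_{L_2(B_r^+)}=\|D_1^{\theta_1}D_{x'}^{\theta'}\vu\|_{L_2(B_r^+)}\le N(R'-r)^{-\theta_1}\|D_{x'}^{\theta'}\vu\|_{L_2(B_{R'}^+)},
\end{equation*}
and then the tangential estimate from the first paragraph bounds the right-hand side by $N\|D^m\vu\|_{L_2(B_R^+)}$. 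Combining the two nested applications over $B_r^+\subset B_{R'}^+\subset B_R^+$ gives the claim, with $N=N(d,m,n,\delta,R,r,\theta)$ as the constants accumulated depend on the number of iterations $|\theta'|$, the gaps $R'-r$ and $R-R'$ (which one may fix as fractions of $R-r$), and hence on $\theta$, $R$, $r$.

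The main obstacle is the bookkeeping for why differentiating in $x'$ preserves the conormal boundary condition on $\Gamma_R$ rather than destroying it — one must verify that in the weak formulation \eqref{eq3.02} restricted to test functions supported in $B_R^+\cup\Gamma_R$, replacing $\phi$ by $D_{x'}^{\theta'}\phi$ (equivalently, integrating the $D_{x'}^{\theta'}$ off $\vu$ onto $\phi$) introduces no boundary contributions, which is exactly the point that $x'$-integration by parts on $\bR^d_+$ generates no boundary term on $\{x_1=0\}$ and that the coefficients, depending only on $x_1$, commute with $D_{x'}^{\theta'}$. Once that is granted, everything reduces to a finite iteration of the already-established Lemma \ref{lem6.2}, and no genuinely new estimate is needed. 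I would present the difference-quotient version first to justify the tangential differentiation cleanly, then pass to genuine derivatives using $\vu\in C^\infty_{\text{loc}}(\overline{\bR^d_+})$.
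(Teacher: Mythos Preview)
Your overall strategy---differentiate tangentially and iterate Lemma \ref{lem6.2}---is exactly what the paper does, and your discussion of why tangential differentiation preserves the conormal condition is fine. But your chain of inequalities has a real gap in the range $|\theta'|<m$, i.e., when $|\theta|-m<\theta_1\le m$ (for instance $\theta=(m,1,0,\dots,0)$ with $m\ge 2$). In the second paragraph you reduce to bounding $\|D_{x'}^{\theta'}\vu\|_{L_2(B_{R'}^+)}$ by $\|D^m\vu\|_{L_2(B_R^+)}$, and in the first paragraph you claim this holds for \emph{all} tangential $\theta'$. It does not: any polynomial of degree $<m$ satisfies $\cL_0\vu=0$ with the conormal condition and has $D^m\vu\equiv 0$ while $D_{x'}^{\theta'}\vu\not\equiv 0$ for $|\theta'|<m$. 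So neither a Poincar\'e inequality nor the asserted bound $\|\vu\|_{L_2(B_R^+)}\le N\|D^m\vu\|_{L_2(B_{R'}^+)}$ is available here---that bound is simply false for the conormal problem, in contrast with the Dirichlet case.

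The paper avoids this by never trying to control $\|D_{x'}^{\theta'}\vu\|$ for small $|\theta'|$. Instead it iterates Lemma \ref{lem6.2} applied to $D_{x'}^{mk}\vu$ to obtain $\|D^m D_{x'}^{mk}\vu\|_{L_2(B_{R'}^+)}\le N\|D^m\vu\|_{L_2(B_R^+)}$ for every $k\ge 0$, and then uses an interpolation inequality in the tangential variables: for fixed $\theta_1\le m$, the function $w=D_1^{\theta_1}\vu$ has both $\|D_{x'}^{m-\theta_1}w\|_{L_2}$ (an $m$th-order derivative of $\vu$) and $\|D_{x'}^{m(k+1)-\theta_1}w\|_{L_2}$ controlled, so $\|D_{x'}^{\theta'}w\|_{L_2}=\|D^\theta\vu\|_{L_2}$ is controlled for every intermediate $|\theta'|$. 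Your argument is correct once $|\theta'|\ge m$, but to cover the remaining range you must replace the nonexistent lower-order bound with this interpolation step.
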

\begin{proof}
It is easily seen that $D_{x'}^{mk} \vu,k=1,2,\ldots,$ also satisfies \eqref{eq2.54} with the conormal derivative boundary condition on $\Gamma_R$. Then by applying Lemma \ref{lem6.2} repeatedly, we obtain
\begin{equation*}
\|D^mD^{mk}_{x'}\vu\|_{L_2(B_{R'}^+)}\le N\|D^m \vu\|_{L_2(B_{R}^+)},
\end{equation*}
where $R'=(r+R)/2$.
From this inequality and the interpolation inequality, we get the desired estimate.
\end{proof}

By using a Sobolev-type inequality, we shall obtain from Corollary \ref{cor6.3} a H\"older estimate of all the $m$-th derivatives of $\vu$ except $D^{\bar\alpha} \vu$, where $\bar\alpha=me_1=(m,0,\ldots,0)$. To compensate this lack of regularity of $D^{\bar\alpha} \vu$, we consider the quantity
$$
\Theta:=\sum_{|\beta|=m}a_{\bar\alpha\beta}D^\beta \vu.
$$

We recall the following useful estimate proved in \cite[Corollary 4.4]{DK10}.

\begin{lemma}
                            \label{corA.2}
Let $k\ge 1$ be an integer, $r\in (0,\infty)$, $p\in [1,\infty]$, $\cD=[0,r]^{d}$, and $u(x)\in L_p(\cD)$. Assume that $D_1^k u=f_0+D_1f_1+\ldots+D_1^{k-1}f_{k-1}$ in $\cD$, where $f_j\in L_p(\cD),j=0,\ldots,k-1$.
Then  $D_1 u \in L_p(\cD)$ and
\begin{equation*}
\|D_1 u\|_{L_p(\cD)}\le N\|u\|_{L_p(\cD)}+N\sum_{j=0}^{k-1}\|f_j\|_{L_p(\cD)},
\end{equation*}
where $N=N(d,k, r)>0$.
\end{lemma}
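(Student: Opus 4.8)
\textbf{Proof plan for Lemma \ref{corA.2}.} The plan is to reduce the statement to a one-dimensional fact and then integrate over the transverse variables. First I would fix $x' \in [0,r]^{d-1}$ and regard $u(\cdot, x')$, $f_j(\cdot, x')$ as functions of $x_1 \in [0,r]$ only. The hypothesis $D_1^k u = f_0 + D_1 f_1 + \cdots + D_1^{k-1} f_{k-1}$ says that, after integrating the identity against a test function in $x_1$, the distribution $D_1^{k-1}\big(D_1 u - f_{k-1}\big)$ equals $f_0 + D_1 f_1 + \cdots + D_1^{k-2} f_{k-2}$; iterating this $k-1$ times (introducing at each stage a new primitive that is bounded on $[0,r]$ by the $L_p$ norm of the corresponding $f_j$ plus lower-order contributions) one sees that $D_1 u$ differs from a function built out of the $f_j$'s and at most $k-1$ "constants of integration" (really functions of $x'$) by nothing at all. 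In other words, on the interval $[0,r]$ one can write $D_1 u = g + P$, where $\|g\|_{L_p([0,r])} \le N \sum_{j} \|f_j\|_{L_p([0,r])}$ and $P = P(x_1)$ is a polynomial in $x_1$ of degree at most $k-1$ whose coefficients are controlled by the same quantities once we also use information about $u$ itself.

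The mechanism for pinning down the polynomial part is the following elementary observation: if $D_1^k u$ is given as above, then $u$ itself, on the interval $[0,r]$, equals a polynomial of degree $\le k-1$ plus $k$-fold antiderivatives of the $f_j$'s, and the coefficients of that polynomial are recovered from the averages of $u$ against the first $k$ Legendre-type polynomials on $[0,r]$, hence bounded by $\|u\|_{L_p([0,r])}$; then $D_1 u$ is the derivative of this representation, and its polynomial part has coefficients bounded by $\|u\|_{L_p([0,r])}$ while its non-polynomial part is a sum of $(k-1-j)$-fold antiderivatives of $f_j$, each bounded in $L_p([0,r])$ by $N(r)\|f_j\|_{L_p([0,r])}$ via the boundedness of the Volterra integration operator on $L_p$ of a finite interval. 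This gives, for a.e. fixed $x'$,
\begin{equation*}
\|D_1 u(\cdot,x')\|_{L_p([0,r])} \le N\big(\|u(\cdot,x')\|_{L_p([0,r])} + \sum_{j=0}^{k-1}\|f_j(\cdot,x')\|_{L_p([0,r])}\big)
\end{equation*}
with $N = N(k,r)$. Raising to the $p$-th power (or taking suprema when $p=\infty$) and integrating $dx'$ over $[0,r]^{d-1}$ yields the claimed bound, with the constant picking up only a dependence on $d$ through the transverse dimension; the case $p=\infty$ is handled the same way with the essential supremum in place of the integral.

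The one genuine subtlety, and the step I expect to be the main obstacle, is making the "constants of integration are functions of $x'$, and they depend measurably on $x'$ with the right integrability" argument rigorous without circularity: a priori we only know $D_1^k u$ has the stated form as a distribution on the full cube $\cD$, not as a classical identity for each slice, so one must first justify slicing, i.e.\ that for a.e.\ $x'$ the one-dimensional identity holds in $\mathcal{D}'([0,r])$ with slice data $f_j(\cdot,x') \in L_p([0,r])$ (true by Fubini, since $f_j \in L_p(\cD)$ implies $f_j(\cdot,x') \in L_p([0,r])$ for a.e.\ $x'$). Once slicing is legitimate, the reconstruction of $u(\cdot,x')$ from its top $x_1$-derivative is a purely one-dimensional ODE statement, and the measurability of the recovered polynomial coefficients in $x'$ follows because they are given by explicit linear functionals (integrals against fixed test polynomials) applied to $u(\cdot,x')$, hence are measurable and, by Hölder on $[0,r]$, bounded by $\|u(\cdot,x')\|_{L_p([0,r])}$. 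After that, everything is bookkeeping with the interval-Volterra operator and Fubini. I would also remark that in our intended application the $f_j$ arise as $D^\theta$-type derivatives of $\vu$ on a cube, so the lemma is exactly what is needed to upgrade control of $D^m\vu$ and the transverse derivatives to control of the remaining pure $x_1$-derivative.
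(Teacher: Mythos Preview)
The paper does not prove this lemma; it simply quotes it from \cite[Corollary 4.4]{DK10}. Your argument is correct in outline and is in fact the standard way to establish this kind of estimate: reduce to a one-dimensional statement by slicing, reconstruct $u(\cdot,x')$ on $[0,r]$ as a polynomial of degree $\le k-1$ plus iterated Volterra primitives of the $f_j(\cdot,x')$, differentiate once, and integrate in $x'$.

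One small slip: you assert that the coefficients of the polynomial part $P$ are bounded by $\|u(\cdot,x')\|_{L_p([0,r])}$ alone. That is not quite right. Writing $u=P+\sum_j G_j$ with $G_j$ the $(k-j)$-fold primitive of $f_j$, the pairing of $u$ with a test polynomial picks up contributions from the $G_j$ as well, so the coefficients of $P$ are bounded by $\|u(\cdot,x')\|_{L_p}+N\sum_j\|f_j(\cdot,x')\|_{L_p}$. This is harmless for the final inequality, since both terms already appear on the right-hand side, but you should state it correctly.

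Your discussion of the slicing subtlety is on point: the passage from the distributional identity on $\cD$ to the one-dimensional identity for a.e.\ $x'$ goes by testing against tensor products $\psi(x_1)\eta(x')$, using Fubini, and exhausting a countable dense family of $\psi$. With that in hand the rest is, as you say, bookkeeping.
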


\begin{corollary}
                \label{lem6.6}
Let $0<r<R<\infty$ and $a_{\alpha\beta}=a_{\alpha\beta}(x_1)$.
Assume $\vu\in C_{\text{loc}}^\infty(\overline{\bR^{d}_+})$ satisfies
\eqref{eq2.54} in $B_R^+$ with the conormal derivative boundary condition on $\Gamma_R$.
Then, for any nonnegative integer $j$,
\begin{equation*}
\|D^j_{x'} \Theta\|_{L_2(B_r^+)}
+ \|D^j_{x'} D_1 \Theta \|_{L_2(B_r^+)}
\le N \|D^m\vu\|_{L_2(B_R^+)},
\end{equation*}
where $N=N(d, m,n,r,R,\delta, j)>0$.
\end{corollary}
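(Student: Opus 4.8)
The plan is to reduce everything to Corollary \ref{cor6.3} by showing that $\Theta$ and its tangential derivatives, together with one $x_1$-derivative, are controlled by $\|D^m\vu\|_{L_2(B_R^+)}$. The point of introducing $\Theta=\sum_{|\beta|=m}a_{\bar\alpha\beta}D^\beta\vu$ is that it is exactly the quantity whose normal derivative can be extracted from the equation. First I would observe that, since $\cL_0\vu=0$ with coefficients depending only on $x_1$, the equation can be rewritten by isolating the term $D^{\bar\alpha}(a_{\bar\alpha\beta}D^\beta\vu)=D_1^m\Theta$. Indeed,
$$
D_1^m\Theta = -\sum_{\substack{|\alpha|=|\beta|=m\\ \alpha\ne\bar\alpha}}D^\alpha(a_{\alpha\beta}D^\beta\vu),
$$
and every multi-index $\alpha$ on the right with $\alpha\ne\bar\alpha$ satisfies $\alpha_1\le m-1$, so $D^\alpha = D_1^{\alpha_1}D_{x'}^{\alpha-\alpha_1 e_1}$ with at least one tangential derivative. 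Hence the right-hand side is of the form $D_1^{m-1}(\text{something})+\ldots+D_1^0(\text{something})$ where each ``something'' is a tangential derivative of $a_{\alpha\beta}D^\beta\vu$, and by Corollary \ref{cor6.3} each such term has its $L_2(B_{R'}^+)$-norm bounded by $N\|D^m\vu\|_{L_2(B_R^+)}$ (note $a_{\alpha\beta}D^\beta\vu$ involves $D^\beta\vu$ with $\beta_1\le m$ and $|\beta|=m$, and applying tangential derivatives keeps $\theta_1\le m$, $|\theta|\ge m$, so Corollary \ref{cor6.3} applies, using $\|a_{\alpha\beta}\|_\infty\le\delta^{-1}$).

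Next I would apply Lemma \ref{corA.2} with $k=m$, $u=\Theta$ (restricted to a cube $\cD\subset B_{R'}^+$ of side comparable to $R-r$), $f_j$ being the coefficient-weighted tangential derivatives of $D^\beta\vu$ just described, together with the trivial bound $\|\Theta\|_{L_2}\le N\|D^m\vu\|_{L_2}$. This yields $\|D_1\Theta\|_{L_2(\cD)}\le N\|D^m\vu\|_{L_2(B_R^+)}$; combined with the tangential bound it gives the $j=0$ case of the corollary on a slightly smaller half-ball. To handle general $j$, I would note that $\Theta$ commutes with tangential differentiation in the sense that $D_{x'}^j\Theta = \sum_{|\beta|=m}a_{\bar\alpha\beta}D^\beta(D_{x'}^j\vu)$ (the coefficients depend only on $x_1$), and $D_{x'}^j\vu$ again satisfies \eqref{eq2.54} with the conormal boundary condition on $\Gamma_R$, with $\|D^m D_{x'}^j\vu\|_{L_2(B_{R''}^+)}\le N\|D^m\vu\|_{L_2(B_R^+)}$ by Corollary \ref{cor6.3}. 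So the $j=0$ argument applied to $D_{x'}^j\vu$ in place of $\vu$ gives the estimate for $D_{x'}^j\Theta$ and $D_{x'}^jD_1\Theta$, after shrinking the radius finitely many times (and then rescaling the radii back to $r<R$ by a covering/chaining argument, which only changes the constant $N$).

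The only mildly delicate point — and the step I would treat most carefully — is the bookkeeping in applying Lemma \ref{corA.2}: one must check that after isolating $D_1^m\Theta$ the right-hand side genuinely has the structure $f_0+D_1f_1+\cdots+D_1^{m-1}f_{m-1}$ with each $f_j\in L_2$ bounded as claimed, i.e.\ that no term with $\alpha_1=m$ other than $\bar\alpha$ itself appears and that the $x_1$-derivative count on each remaining $D^\alpha(a_{\alpha\beta}D^\beta\vu)$ is at most $m-1$. This is immediate from $|\alpha|=m$ and $\alpha\ne\bar\alpha\Rightarrow\alpha_1\le m-1$, but it is the place where the definition of $\Theta$ is used in an essential way, so it deserves an explicit sentence. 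Everything else is interpolation and the already-established Corollary \ref{cor6.3} and Lemma \ref{corA.2}.
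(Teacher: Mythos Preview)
Your proposal is correct and follows essentially the same route as the paper: reduce to $j=0$ via tangential differentiation (using that $D_{x'}^j\vu$ again solves the system with the conormal condition), then rewrite the equation as $D_1^m\Theta=-\sum_{\alpha_1<m}D_1^{\alpha_1}(a_{\alpha\beta}D_{x'}^{\alpha'}D^\beta\vu)$ and apply Lemma \ref{corA.2} together with Corollary \ref{cor6.3} and a covering argument. The only cosmetic difference is the order of presentation---the paper reduces to $j=0$ first and then proves it, whereas you do the reverse.
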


\begin{proof}
Due to Corollary \ref{cor6.3} and the fact that $D_{x'}^j\vu$ satisfies \eqref{eq2.54} with the conormal derivative boundary condition, it suffices to prove the desired inequality when $j = 0$ and $R$ is replaced by another $R'$ such that $r < R' < R$.
Obviously, we have
\begin{equation*}
\|\Theta \|_{L_2(B_r^+)}
\le N \|D^m\vu\|_{L_2(B_r^+)}.
\end{equation*}
Thus we prove that, for $R'=(r+R)/2$,
\begin{equation}
                        \label{eq0924}
\|D_1 \Theta \|_{L_2(B_r^+)}
\le N \|D^m\vu\|_{L_2(B_{R'}^+)}.
\end{equation}
From \eqref{eq2.54}, in $B_R^+$ we have
$$
D_1^m\Theta=-\sum_{\substack{|\alpha|=|\beta|=m\\\alpha_1<m}}
D^\alpha(a_{\alpha\beta} D^\beta \vu)
=-\sum_{\substack{|\alpha|=|\beta|=m\\\alpha_1<m}}D_1^{\alpha_1}
(a_{\alpha\beta}D_{x'}^{\alpha'}D^\beta \vu).
$$
Then the estimate \eqref{eq0924} follows from Lemma \ref{corA.2} with a covering argument and Corollary \ref{cor6.3}. The corollary is proved.
\end{proof}

\subsection{H\"older estimates}

By using the $L_2$ estimates obtained in Section \ref{sec3.1}, in this section we shall
derive several H\"older estimates of derivatives of $\vu$.
As usual, for $\mu\in (0,1)$ and a function $u$ defined on $\cD \subset \bR^{d} $, we denote
$$
[u]_{C^{\mu}(\cD)}
= \sup_{\substack{x,y\in\cD\\x\ne y}}\frac{|u(x)-u(y)|}
{|x-y|^{\mu}},
$$
$$
\|u\|_{C^{\mu}(\cD)}=[u]_{C^{\mu}(\cD)}+\|u\|_{L_{\infty}(\cD)}.
$$

\begin{lemma}
                \label{lem6.4}
Let $a_{\alpha\beta}=a_{\alpha\beta}(x_1)$.
Assume that $\vu\in C_{\text{loc}}^\infty(\overline{\bR^{d}_+})$ satisfies
\eqref{eq2.54} in $B_2^+$ with the conormal derivative boundary condition on $\Gamma_2$.
Then for any $\alpha$ satisfying $|\alpha|=m$ and $\alpha_1<m$, we have
\begin{equation*}
\|\Theta\|_{C^{1/2}(B_1^+)}+\|D^\alpha\vu\|_{C^{1/2}(B_1^+)}
\le N \|D^m\vu\|_{L_2(B_2^+)},
\end{equation*}
where $N=N(d,m,n,\delta)>0$.
\end{lemma}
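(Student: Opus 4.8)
The plan is to bootstrap the $L_2$-estimates of Section \ref{sec3.1} into H\"older bounds via a Sobolev-type embedding applied in the tangential directions, exploiting that $D_{x'}^j\vu$ again solves \eqref{eq2.54} with the conormal condition. Concretely, for any multi-index $\alpha$ with $|\alpha|=m$, $\alpha_1<m$, and for any nonnegative integer $j$, Corollary \ref{cor6.3} gives control of $\|D^\theta\vu\|_{L_2(B_r^+)}$ for all $\theta$ with $\theta_1\le m$ and $|\theta|\ge m$ by $\|D^m\vu\|_{L_2(B_R^+)}$; in particular $D_{x'}^j D^\alpha\vu\in L_2(B_r^+)$ for every $j$, since $\alpha_1<m\le m$ and $|\alpha|+j\ge m$. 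Likewise Corollary \ref{lem6.6} controls $\|D_{x'}^j\Theta\|_{L_2}+\|D_{x'}^jD_1\Theta\|_{L_2}$. Thus both $\Theta$ and $D^\alpha\vu$ lie in a space with arbitrarily many tangential $L_2$-derivatives plus one full $x_1$-derivative (for $\Theta$ this is built in; for $D^\alpha\vu$ note $\alpha_1<m$, so $D_1 D^\alpha\vu=D^{\alpha+e_1}\vu$ has $(\alpha+e_1)_1\le m$ and is again covered by Corollary \ref{cor6.3}).

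The key step is then an anisotropic Sobolev embedding on $B_r^+$: a function $w$ with $D_1 w\in L_2$ and $D_{x'}^j w\in L_2$ for $j$ up to some $k$ depending only on $d$ satisfies $w\in C^{1/2}(B_{r'}^+)$ with norm bounded by the sum of these $L_2$-norms. One clean way to see this is to write, for fixed $x_1$, the tangential regularity $\|D_{x'}^k w(x_1,\cdot)\|_{L_2(B'_{r})}$ (with $k>(d-1)/2+1/2$) to get $w(x_1,\cdot)\in C^{1/2}(B'_{r})$ uniformly, integrate in $x_1$ after using $\|D_1 w\|_{L_2}$ to bound the $x_1$-modulus of continuity of the tangential norms, or more simply apply a one-shot Sobolev–Morrey embedding in the mixed-derivative Sobolev space $\{w: D_1 w, D_{x'}^j w\in L_2,\ j\le k\}$, which embeds into $C^{1/2}$ for $k$ large enough in terms of $d$. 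Applying this with $w=\Theta$ and $w=D^\alpha\vu$, and absorbing the finitely many relevant $L_2$-norms into $\|D^m\vu\|_{L_2(B_2^+)}$ via Corollaries \ref{cor6.3} and \ref{lem6.6}, yields the stated estimate; the constant depends only on $d,m,n,\delta$ since $k=k(d)$ and all the intermediate constants depend on the radii $1$ and $2$, which are fixed here.

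I expect the main obstacle to be the anisotropic embedding itself: one must be careful that only a single $x_1$-derivative is available, so the naive isotropic $H^k\hookrightarrow C^{1/2}$ does not directly apply, and one has to interpolate (or use a tensor-product Sobolev space argument) to trade the many tangential derivatives against the lone normal derivative. A convenient device is to reduce to a cube $\cD=[0,r]^d$ as in Lemma \ref{corA.2}, reflect or extend in $x_1$ if needed, and invoke a standard mixed-norm Sobolev inequality; alternatively one fixes $x_1$, gets a uniform-in-$x_1$ $C^{1/2}$ bound in $x'$ from the tangential derivatives, and then uses the fundamental theorem of calculus in $x_1$ together with $\|D_1 w\|_{L_2}$ and a further tangential-derivative bound on $D_1 w$ (available for both $\Theta$ and $D^\alpha\vu$ by the same corollaries) to control the $x_1$-oscillation. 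Either route is routine once set up; the bookkeeping of which derivatives $D^\theta\vu$ are legitimately bounded by Corollary \ref{cor6.3} — i.e., checking $\theta_1\le m$ throughout — is the only place requiring genuine care.
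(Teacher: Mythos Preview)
Your proposal is correct and follows the same route as the paper: the paper's proof simply invokes Corollaries \ref{cor6.3} and \ref{lem6.6} together with the argument of Lemma 4.1 in \cite{DK10}, which is precisely the anisotropic Sobolev embedding you describe (many tangential $L_2$-derivatives plus one normal $L_2$-derivative, with tangential derivatives of $D_1 w$ also controlled, yielding $C^{1/2}$). Your identification of the needed bookkeeping---in particular that $D_{x'}^j D_1\Theta$ and $D_{x'}^j D_1 D^\alpha\vu$ (the latter having first index $\alpha_1+1\le m$) are both covered by the two corollaries---is exactly the point.
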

\begin{proof}
The lemma follows from the proof of Lemma 4.1 in \cite{DK10} by using Corollaries \ref{cor6.3} and \ref{lem6.6}.
\end{proof}

For $\lambda\ge0$, let
\begin{equation*}
U=\sum_{|\alpha| \le m}\lambda^{\frac 1 2-\frac {|\alpha|}{2m}} |D^\alpha \vu|,
\quad
U'=\sum_{|\alpha|\le m,\alpha_1<m} \lambda^{\frac 1 2-\frac {|\alpha|} {2m}} |D^\alpha \vu|.
\end{equation*}
Notably, since the matrix $[a_{\bar\alpha\bar\alpha}^{ij}]_{i,j=1}^n$ is positive definite, we have
\begin{equation}
                            \label{eq12.01}
N^{-1}U\le U'+|\Theta|\le NU,
\end{equation}
where $N=N(d,m,n,\delta)$.

\begin{lemma}
                \label{lem6.7}
Let $a_{\alpha\beta}=a_{\alpha\beta}(x_1)$ and $\lambda\ge 0$.
Assume that $\vu\in C_{\text{loc}}^\infty(\overline{\bR^{d}_+})$ satisfies
\begin{equation*}
(-1)^m\cL_0 \vu+\lambda \vu=0
\end{equation*}
in $B_2^+$ with the conormal derivative condition on $\Gamma_2$.
Then we have
\begin{align}
            \label{eq3.11}
\|\Theta\|_{C^{1/2}(B_1^+)}+\|U'\|_{C^{1/2}(B_1^+)}
\le N \|U\|_{L_2(B_2^+)},\\
                \label{eq14.51}
\|U\|_{L_\infty(B_1^+)}\le N\|U\|_{L_2(B_2^+)},
\end{align}
where $N=N(d,m,n,\delta)>0$.
\end{lemma}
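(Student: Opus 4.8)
The plan is to deduce everything from Lemma \ref{lem6.4} (the case $\lambda=0$), reducing the case $\lambda>0$ to it by adjoining one extra variable. First, \eqref{eq14.51} will be a formal consequence of \eqref{eq3.11} and \eqref{eq12.01}: once \eqref{eq3.11} is known, it bounds $\|\Theta\|_{L_\infty(B_1^+)}$ and $\|U'\|_{L_\infty(B_1^+)}$ by $N\|U\|_{L_2(B_2^+)}$, and then $\|U\|_{L_\infty(B_1^+)}\le N(\|U'\|_{L_\infty(B_1^+)}+\|\Theta\|_{L_\infty(B_1^+)})$ by \eqref{eq12.01}. So the whole task is \eqref{eq3.11}. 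When $\lambda=0$ this is immediate: then $U=\sum_{|\alpha|=m}|D^\alpha\vu|$ and $U'=\sum_{|\alpha|=m,\,\alpha_1<m}|D^\alpha\vu|$ (every term with $|\alpha|<m$ carries a positive power of $\lambda$), so \eqref{eq3.11} is just Lemma \ref{lem6.4} summed over the finitely many $\alpha$ with $|\alpha|=m$ and $\alpha_1<m$. For $\lambda>0$ I would use the classical device of adjoining a variable.

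Write points of $\bR^{d+1}_+$ as $(x,y)$ with $x\in\bR^d_+$, $y\in\bR$, keep $x_1$ as the first coordinate, and set $\mu:=\lambda^{1/(2m)}$ and $\vv(x,y):=\vu(x)\,e^{i\mu y}$. Since $D_y^{2m}(e^{i\mu y})=(i\mu)^{2m}e^{i\mu y}=(-1)^m\lambda\,e^{i\mu y}$, the function $\vv$ satisfies the homogeneous augmented system
\[
(-1)^m\widetilde{\cL}_0\vv:=(-1)^m\big(\cL_0\vv+D_y^{2m}\vv\big)=0
\]
in the half-ball $\widetilde{B}_2^+:=B_2(0)\cap\bR^{d+1}_+$ with the conormal derivative condition on $\widetilde{\Gamma}_2:=B_2(0)\cap\partial\bR^{d+1}_+$. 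The operator $\widetilde{\cL}_0$ is again a divergence-form operator of order $2m$ whose leading coefficients depend only on the first coordinate ($a_{\alpha\beta}(x_1)$ on the $x$-block, the identity on the $D_y^{2m}$-block, zero on the mixed blocks), and it satisfies the ellipticity condition in the relaxed sense of \cite[Remark 2.5]{DK10} with a constant depending only on $\delta$ — which is the form of the hypothesis actually used throughout this section. Hence the auxiliary results of this section, in particular Lemma \ref{lem6.4}, hold in $\bR^{d+1}$ and apply to $\vv$.

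Applying the $(d+1)$-dimensional version of Lemma \ref{lem6.4} to $\vv$ on $\widetilde{B}_2^+$ gives
\[
\|\widetilde{\Theta}\|_{C^{1/2}(\widetilde{B}_1^+)}+\sum_{\substack{|\widehat{\alpha}|=m\\ \widehat{\alpha}_1<m}}\|\widetilde{D}^{\widehat{\alpha}}\vv\|_{C^{1/2}(\widetilde{B}_1^+)}\le N\,\|\widetilde{D}^m\vv\|_{L_2(\widetilde{B}_2^+)},
\]
where $\widehat{\alpha}$ ranges over $(d+1)$-dimensional multi-indices and $\widetilde{\Theta}=\sum_{|\widehat{\beta}|=m}\widetilde{a}_{\bar\alpha\widehat{\beta}}D^{\widehat{\beta}}\vv$ with $\bar\alpha=me_1$. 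Pointwise $|\widetilde{D}^m\vv(x,y)|\le N\sum_{|\alpha|\le m}\mu^{m-|\alpha|}|D^\alpha\vu(x)|=N\,U(x)$, so $\|\widetilde{D}^m\vv\|_{L_2(\widetilde{B}_2^+)}\le N\|U\|_{L_2(B_2^+)}$. Restricting to the slice $\{y=0\}\subset\widetilde{B}_1^+$, one has $\widetilde{\Theta}(x,0)=\Theta(x)$, and for $\widehat{\alpha}=(\alpha,m-|\alpha|)$ with $\alpha_1<m$ one has $\widetilde{D}^{\widehat{\alpha}}\vv(x,0)=(i\mu)^{m-|\alpha|}D^\alpha\vu(x)$ with $|(i\mu)^{m-|\alpha|}|=\lambda^{1/2-|\alpha|/2m}$; since restriction to a hyperplane does not increase a $C^{1/2}$-norm and the index sets match, summing yields $\|\Theta\|_{C^{1/2}(B_1^+)}+\|U'\|_{C^{1/2}(B_1^+)}\le N\|U\|_{L_2(B_2^+)}$, which is \eqref{eq3.11}.

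The one point above that is not routine bookkeeping — and the main obstacle — is verifying that $\vv$ inherits the conormal condition on $\widetilde{\Gamma}_2$. I would argue: for an admissible test function $\psi\in W^m_2(\widetilde{B}_2^+)$ supported in $\widetilde{B}_2^+\cup\widetilde{\Gamma}_2$, insert it into the weak formulation for $\widetilde{\cL}_0$ and integrate the $D_y^{2m}$-term by parts twice in $y$ (no boundary terms, as $\psi$ vanishes near the spherical part of $\partial\widetilde{B}_2^+$); this converts that term into $\lambda(\psi,\vv)$, turning the identity into the $(d+1)$-dimensional analogue of \eqref{eq3.02}. Then by Fubini in $y$, for a.e.\ $y$ the inner integral is exactly the weak formulation of $(-1)^m\cL_0\vu+\lambda\vu=0$ on the smaller half-ball $B_{\sqrt{4-y^2}}^+$ with the conormal condition on $\Gamma_{\sqrt{4-y^2}}$, tested against $\psi(\cdot,y)$, hence vanishes. (The companion check, that $\widetilde{\cL}_0$ satisfies the relaxed ellipticity with a $\delta$-dependent constant, is the exact analogue of the corresponding step in the Dirichlet treatment of \cite{DK10}.)
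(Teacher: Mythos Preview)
Your proof is correct and takes essentially the same approach as the paper---Agmon's trick of adjoining an extra variable to reduce $\lambda>0$ to Lemma~\ref{lem6.4}---the only cosmetic difference being your choice of $e^{i\mu y}$ in place of the paper's real auxiliary $\eta(y)=\cos(\lambda^{1/(2m)}y)+\sin(\lambda^{1/(2m)}y)$. One small slip: in your conormal verification, the $y$-term in the weak formulation is $\int D_y^m\psi\cdot D_y^m\vv$, so converting it to $\lambda\int\psi\cdot\vv$ requires integrating by parts $m$ times in $y$, not twice.
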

\begin{proof}
First we prove \eqref{eq3.11}. The case when $\lambda=0$ follows from Lemma \ref{lem6.4}. To deal with the case $\lambda>0$, we follow an idea by S. Agmon, which was originally used in a quite different situation. Let $
\eta(y)=\cos(\lambda^{1/(2m)}y)+\sin(\lambda^{1/(2m)}y)
$
so that $\eta$ satisfies
$$
D^{2m}\eta=(-1)^m\lambda \eta,\quad
\eta(0)=1,\quad |D^j\eta(0)|=\lambda^{j/(2m)},\,\,\,j=1,2,\ldots.
$$
Let $z = (x,y)$ be a point in $\bR^{d+1}$, where $x \in \bR^{d}$, $y \in \bR$,
and $\hat{\vu}(z)$ and $\hat{B}_r^+$ be given by
$$
\hat{\vu}(z) = \hat{\vu}(x,y) = \vu(x)\eta(y),
\quad
\hat{B}_r^+ = \{ |z| < r: z \in \bR^{d+1},x_1>0 \}.
$$
Also define
$$
\hat\Theta=\sum_{|\beta|=m}a_{\bar\alpha\beta}D^{(\beta,0)} \hat\vu.
$$
It is easily seen that $\hat{\vu}$ satisfies
$$
(-1)^m\cL_0\hat{\vu}+(-1)^mD^{2m}_y \hat{\vu} = 0
$$
in $\hat{B}_2^+$ with the conormal derivative condition on $\hat{B}_2\cap \partial \bR^{d+1}_+$.
By Lemma \ref{lem6.4} applied to $\hat{\vu}$ we have
\begin{equation}                                 \label{eq0804}
 \|\hat\Theta\|_{C^{1/2}(\hat{B}_1^+)}+\big\| D_z^\beta\hat{\vu}\big\|_{C^{1/2}(\hat{B}_1^+)}
\le N(d,m,n,\delta) \|D^m_z\hat{\vu}\|_{L_2(\hat{B}_2^+)}
\end{equation}
for any $\beta=(\beta_1,\ldots,\beta_{d+1})$ satisfying $|\beta|=m$ and $\beta_1<m$.
Notice that for any $\alpha=(\alpha_1,\ldots,\alpha_d)$ satisfying $|\alpha|\le m$ and $\alpha_1<m$,
$$
\lambda^{\frac 1 2-\frac {|\alpha|} {2m}} \big\|D^\alpha\vu\big\|_{C^{1/2}(B_1^+)}
\le N\big\| D^\beta_z\hat{\vu} \big\|_{C^{1/2}(\hat{B}_1^+)},\quad \beta=(\alpha_1,\ldots,\alpha_d,m-|\alpha|),
$$
$$
\|\Theta\|_{C^{1/2}(B_1^+)}\le \|\hat\Theta\|_{C^{1/2}(\hat{B}_1^+)},
$$
and $D_z^m \hat{\vu}$ is a linear combination of
$$
\lambda^{\frac 1 2-\frac k {2m}}\cos( \lambda^{\frac 1 {2m}} y) D^k_x\vu,
\quad
\lambda^{\frac 1 2-\frac k {2m}}\sin( \lambda^{\frac 1 {2m}} y) D^k_x\vu,\quad k=0,1,\ldots,m.
$$
Thus  the right-hand side of \eqref{eq0804} is less than the right-hand side of \eqref{eq3.11}. This completes the proof of \eqref{eq3.11}. Finally, we get \eqref{eq14.51} from \eqref{eq3.11}  and \eqref{eq12.01}.
\end{proof}

Similarly, we have the following interior estimate.

\begin{lemma}
                \label{cor3.5}
Let $a_{\alpha\beta}=a_{\alpha\beta}(x_1)$ and $\lambda\ge0$.
Assume that $\vu\in C_{\text{loc}}^\infty(\bR^{d})$ satisfies
\begin{equation*}
(-1)^m\cL_0 \vu+\lambda \vu=0
\end{equation*}
in $B_2$.
Then we have
\begin{align*}
\|\Theta\|_{C^{1/2}(B_1)}+\left\|U'\right\|_{C^{1/2}(B_1)}
\le N \|U\|_{L_2(B_2)},\\
\|U\|_{L_\infty(B_1)}\le N\|U\|_{L_2(B_2)},
\end{align*}
where $N=N(d,m,n,\delta)>0$.
\end{lemma}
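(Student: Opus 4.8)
The plan is to reduce the interior estimate for Lemma \ref{cor3.5} to the boundary estimate of Lemma \ref{lem6.7} by a reflection (even extension) argument in the $x_1$-variable. The point is that a function defined on all of $\bR^d$ satisfying $(-1)^m\cL_0\vu+\lambda\vu=0$ in $B_2$ can, after restricting to the half ball $B_2^+$, be viewed as a solution with the conormal boundary condition on $\Gamma_2$ — indeed, any smooth function on $\bR^d$ trivially satisfies the weak conormal formulation on $B_2^+$ for test functions supported in $B_2\cup\Gamma_2$, since the conormal condition is precisely the natural boundary condition coming from integration by parts with no prescribed trace. Thus $\vu|_{B_2^+}$ fulfils the hypotheses of Lemma \ref{lem6.7}, which gives the H\"older estimates of $\Theta$ and $U'$ on $B_1^+$ and the $L_\infty$ bound of $U$ on $B_1^+$, controlled by $\|U\|_{L_2(B_2^+)}\le \|U\|_{L_2(B_2)}$.

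Next, I would apply the same reasoning to the half ball $\{x_1<0\}\cap B_2$: the function $\vu(-x_1,x')$ solves an elliptic system of the same type (the ellipticity and the one-variable dependence of the coefficients are preserved under the reflection $x_1\mapsto -x_1$, up to relabelling $a_{\alpha\beta}$, which only changes constants through $\delta$) with the conormal condition on $\Gamma_2$, so Lemma \ref{lem6.7} again applies and yields the corresponding estimates on the lower half ball $B_1^-$. Combining the bounds from $B_1^+$ and $B_1^-$, together with the continuity of $\vu$ and its derivatives across $\{x_1=0\}$ (which holds since $\vu\in C^\infty_{\text{loc}}(\bR^d)$), I get the H\"older seminorm estimate and the $L_\infty$ estimate on the full ball $B_1$. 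The only mild subtlety is matching the two half-ball H\"older estimates across the hyperplane: this follows from the elementary fact that if a function is $C^{1/2}$ on each of two closed half balls sharing a flat face and is continuous across the face, then it is $C^{1/2}$ on the union with comparable seminorm, since any two points either lie in a common half ball or can be joined through a point on the face with controlled distances.

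Alternatively, and perhaps more cleanly, one can simply repeat the arguments of Lemmas \ref{lem6.2}--\ref{lem6.7} verbatim in the interior setting: the local $L_2$ Caccioppoli estimate of Lemma \ref{lem6.2} has an obvious interior analogue (testing the equation $(-1)^m\cL_0\vu=0$ against $\zeta^{2m}\vu$ with a cutoff $\zeta$), Corollary \ref{cor6.3} carries over since $D^{mk}_{x'}\vu$ still solves the equation in the interior, Corollary \ref{lem6.6} carries over by the same use of Lemma \ref{corA.2}, and then Lemma \ref{lem6.4} and the Agmon trick in Lemma \ref{lem6.7} go through word for word with $B_r^+$ replaced by $B_r$ and $\hat B_r^+$ by $\hat B_r$, there being no boundary term to worry about. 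In either approach the main obstacle is essentially bookkeeping rather than a genuine difficulty: one must be careful that the H\"older estimate for $D^\alpha\vu$ (for $|\alpha|=m$, $\alpha_1<m$) controls all top-order derivatives except $D^{\bar\alpha}\vu$, and that $D^{\bar\alpha}\vu$ is recovered only through the combination $\Theta=\sum_{|\beta|=m}a_{\bar\alpha\beta}D^\beta\vu$, using the positive-definiteness of $[a^{ij}_{\bar\alpha\bar\alpha}]$ as in \eqref{eq12.01} to pass between $U$ and $U'+|\Theta|$. Given that all of these ingredients are already in place, the proof is short: invoke Lemma \ref{lem6.7} on $B_1^+$ and its reflection, then patch.
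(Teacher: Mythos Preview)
Your second approach --- repeating the chain Lemma \ref{lem6.2} $\to$ Corollary \ref{cor6.3} $\to$ Corollary \ref{lem6.6} $\to$ Lemma \ref{lem6.4} $\to$ Lemma \ref{lem6.7} with $B_r^+$ replaced by $B_r$ --- is correct and is exactly what the paper does: the lemma is introduced with ``Similarly, we have the following interior estimate'' and no separate proof is given.

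Your first approach, however, has a genuine gap. It is \emph{not} true that a smooth solution of $(-1)^m\cL_0\vu+\lambda\vu=0$ in the full ball $B_2$ automatically satisfies the conormal condition on $\Gamma_2$ when restricted to $B_2^+$. The conormal condition is ``natural'' in the variational sense, but that means it is the boundary condition \emph{forced} by the weak formulation, not one that is automatically satisfied. Concretely, take $m=1$, $\cL_0=\Delta$, $\lambda=0$: the function $\vu(x)=x_1$ is harmonic in $B_2$, yet $\partial_{x_1}\vu\equiv 1\neq 0$ on $\Gamma_2$, so the weak identity
\[
\int_{B_2^+} D\phi\cdot D\vu\,dx=0
\]
fails for test functions $\phi$ that do not vanish on $\Gamma_2$ (the boundary term $\int_{\Gamma_2}\phi\,\partial_{x_1}\vu$ survives). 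The same obstruction persists for higher order: integration by parts on $B_2^+$ produces boundary terms on $\Gamma_2$ that encode the conormal data of $\vu$, and these vanish only if $\vu$ happens to satisfy those conditions --- which a generic interior solution does not. Hence you cannot feed $\vu|_{B_2^+}$ into Lemma \ref{lem6.7}, and the reflection-and-patch strategy does not get off the ground. Stick with your second route.
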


\subsection{The maximal function theorem and a generalized Fefferman-Stein theorem}
We recall the maximal function theorem and  a generalized Fefferman-Stein theorem.
Let
$$
\cQ=\{B_r(x): x \in \bR^{d}, r \in (0,\infty)\}.
$$
For a function $g$ defined in $\bR^{d}$,
the maximal function of $g$ is given by
$$
\cM  g (x) = \sup_{B \in \cQ, x \in B} \dashint_{B} |g(y)| \, dy.
$$
By the Hardy--Littlewood maximal function theorem,
$$
\| \cM  g \|_{L_p(\bR^{d})} \le N \| g\|_{L_p(\bR^{d})},
$$
if $g \in L_p(\bR^{d})$, where $1 < p < \infty$ and $N = N(d,p)$.

Theorem \ref{th081201} below is from \cite{Krylov08}
and can be considered as a generalized version of the Fefferman-Stein Theorem.
To state the theorem,
let
$$
\bC_l = \{ C_l(i_1, \ldots, i_d), i_1, \ldots, i_d \in \bZ, i_1\ge 0 \},
\quad l \in \bZ
$$
be the collection of
partitions given by dyadic cubes in $\bR^{d}_+$
\begin{equation*}
 [ i_1 2^{-l}, (i_1+1)2^{-l} ) \times \ldots \times [ i_d 2^{-l}, (i_d+1)2^{-l} ).
\end{equation*}

\begin{theorem}                         \label{th081201}
Let $p \in (1, \infty)$, and $U,V,F\in L_{1,\text{loc}}(\bR^{d}_+)$.
Assume that we have $|U| \le V$
and, for each $l \in \bZ$ and $C \in \bC_l$,
there exists a measurable function $U^C$ on $C$
such that $|U| \le U^C \le V$ on $C$ and
\begin{equation*}                            
\int_C |U^C - \left(U^C\right)_C| \,dx
\le \int_C F(x) \,dx.
\end{equation*}
Then
$$
\| U \|_{L_p(\bR^{d}_+)}^p
\le N(d,p) \|F\|_{L_p(\bR^{d}_+)}\| V \|_{L_p(\bR^{d}_+)}^{p-1}.
$$
\end{theorem}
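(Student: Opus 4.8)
The plan is to reduce the statement to a sharp-function estimate of Fefferman--Stein type, adapted to the dyadic filtration $\{\bC_l\}_{l\in\bZ}$ on $\bR^d_+$. First I would introduce, for each $l$, the dyadic averaging operator $\bE_l$ defined by $\bE_l h(x)=(h)_C$ where $C\in\bC_l$ is the unique cube of generation $l$ containing $x$; these operators form a martingale-type filtration, and the corresponding dyadic maximal function $\bM h(x)=\sup_{l}\bE_l|h|(x)$ satisfies the weak $(1,1)$ and strong $(p,p)$ bounds on $L_p(\bR^d_+)$ by the Hardy--Littlewood maximal function theorem quoted just above (restricted to the half space; note the cubes in $\bC_l$ are contained in $\overline{\bR^d_+}$, so the standard covering argument applies verbatim). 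The dyadic sharp function I would use is
\[
U^{\#}(x)=\sup_{l\in\bZ}\ \dashint_{C_l(x)}\bigl|U(y)-(U)_{C_l(x)}\bigr|\,dy,
\]
where $C_l(x)$ denotes the cube in $\bC_l$ containing $x$.

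The key steps, in order, are as follows. Step 1: show the pointwise bound $U^{\#}(x)\le \bM F(x)$. Indeed, fix $x$ and $l$; let $C=C_l(x)$. By hypothesis there is $U^C$ with $|U|\le U^C\le V$ on $C$ and $\int_C|U^C-(U^C)_C|\le\int_C F$. Writing $U=(U^C-c)-(U^C-U)$ with an arbitrary constant $c$ and using that $(U)_C$ is the best $L_1$-constant up to a factor $2$, one gets
\[
\dashint_C|U-(U)_C|\,dy\le 2\dashint_C|U-c|\,dy
\]
for a suitable choice of $c$; taking $c=(U^C)_C$ and estimating $|U-(U^C)_C|\le|U^C-(U^C)_C|+|U^C-U|$ with $|U^C-U|=U^C-U\le U^C$ and a symmetric manipulation, one bounds $\dashint_C|U-(U)_C|$ by a constant multiple of $\dashint_C|U^C-(U^C)_C|\le\dashint_C F$, hence by $\bM F(x)$. (The precise constant is harmless; it is absorbed into $N(d,p)$.) Step 2: invoke the dyadic Fefferman--Stein inequality $\|h\|_{L_p}\le N(p)\|h^{\#}\|_{L_p}$ valid for martingale-type filtrations — this is the content of the cited result from \cite{Krylov08} and is proved by a good-$\lambda$ (distributional) inequality relating the level sets of $\bM h$ and $h^{\#}$. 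Applying it to $h=U$ and combining with Step 1 gives $\|U\|_{L_p(\bR^d_+)}\le N\|\bM F\|_{L_p(\bR^d_+)}\le N\|F\|_{L_p(\bR^d_+)}$, which is weaker than claimed (it does not exhibit the factor $\|V\|_{L_p}^{p-1}$).

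Step 3 is where the genuine improvement — and the main obstacle — lies: upgrading $\|U\|_p\le N\|F\|_p$ to the \emph{bilinear} bound $\|U\|_p^p\le N\|F\|_p\|V\|_p^{p-1}$. The idea is to run a sharper good-$\lambda$ argument that keeps track of $V$: since $|U|\le V$ pointwise, on the set where $\bM U$ is large one also has $V$ comparatively large, and the excess oscillation controlled by $U^{\#}\le\bM F$ can be paired against $V$ rather than against $U$ itself. Concretely I would establish, for suitable absolute constants, a distributional inequality of the form
\[
\bigl|\{\bM U>\beta\lambda,\ \bM F\le\varepsilon\lambda\}\bigr|
\le C\varepsilon\,\bigl|\{\bM V>\lambda\}\bigr|+(\text{small})\bigl|\{\bM U>\lambda\}\bigr|,
\]
multiply by $p\lambda^{p-1}$, integrate in $\lambda$ over $(0,\infty)$, and optimize in $\varepsilon$; the cross term produces $\int(\bM F)(\bM V)^{p-1}$, to which Hölder with exponents $p$ and $p/(p-1)$ and the maximal function theorem are applied to yield $\|F\|_p\|V\|_p^{p-1}$. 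The delicate point is that this is exactly the nonstandard estimate proved in \cite{Krylov08}; rather than reproduce the good-$\lambda$ bookkeeping, I would cite it directly, having verified that the two structural hypotheses it requires — namely $|U|\le V$ and the per-cube oscillation control by $\int_C F$ — are precisely what we have assumed. Thus the proof is: reduce to the sharp-function formulation via Step 1, then quote the bilinear dyadic Fefferman--Stein estimate of Krylov for the conclusion.
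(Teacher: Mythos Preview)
The paper does not prove Theorem~\ref{th081201}; it is quoted verbatim from \cite{Krylov08} and used as a black box. Your final position in Step~3 --- verify the hypotheses and cite Krylov's result directly --- is therefore exactly what the paper does, and on that level your proposal is correct.

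That said, your Steps~1--2 toward an independent proof contain a genuine gap. The claimed pointwise bound $U^{\#}(x)\le N\,\bM F(x)$ does not follow from the hypotheses, and your sketch of it breaks at the line ``$|U^C-U|=U^C-U\le U^C$ and a symmetric manipulation, one bounds $\dashint_C|U-(U)_C|$ by a constant multiple of $\dashint_C|U^C-(U^C)_C|$''. The term $\dashint_C|U^C-U|$ is controlled only by $\dashint_C U^C$, not by the oscillation $\dashint_C|U^C-(U^C)_C|$; there is no ``symmetric manipulation'' that removes the zero-frequency part $(U^C)_C$. Concretely, take $d=1$, $V=1_{[0,1)}$, $U=1_{[0,1/2)}-1_{[1/2,1)}$, and $U^C=V|_C$ for every dyadic $C$. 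Then $U^C$ has zero oscillation on every $C\subset[0,1)$, so the minimal admissible $F$ can be supported in $[1,\infty)$; yet the dyadic oscillation of $U$ on $[0,1)$ equals $1$. Thus $U^{\#}$ is not dominated by $\bM F$ on $[0,1)$ with a constant independent of the configuration, and no classical Fefferman--Stein applied to $U$ alone can give the conclusion. This is precisely why the theorem allows $U^C$ to depend on the cube and why the bilinear factor $\|V\|_{L_p}^{p-1}$ is essential rather than cosmetic; the weaker linear bound $\|U\|_{L_p}\le N\|F\|_{L_p}$ you derive in Step~2 is not a consequence of the hypotheses.

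In short: drop the attempted reduction to the ordinary sharp function of $U$, and simply cite \cite{Krylov08} as the paper does; if you want to include a proof, you must run the good-$\lambda$/stopping-time argument on the family $\{U^C\}$ (with $V$ entering through the pointwise sandwich $|U|\le U^C\le V$) rather than on $U$.
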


\subsection{Approximations of Reifenberg domains}

Let $\Omega$ be a domain in $\bR^d$. Throughout this subsection, we assume that, for any $x\in \partial\Omega$ and $r\in (0,1]$, $\Omega$ satisfies \eqref{eq3.49} in an appropriate coordinate system.
That is, $\Omega$ satisfies the following assumption with $\gamma<1/50$.

\begin{assumption}[$\gamma$]
                                        \label{assump11}
There is a constant $R_0\in (0,1]$ such that the following holds.
For any $x\in \partial\Omega$ and $r\in (0,R_0]$, there is a coordinate system depending on $x$ and $r$
such that in this new coordinate system, we have
\begin{equation}
                    \label{eq1005_1}
 \{(y_1,y'):x_1+\gamma r<y_1\}\cap B_r(x)
 \subset\Omega_r(x)
 \subset \{(y_1,y'):x_1-\gamma r<y_1\}\cap B_r(x).
\end{equation}
\end{assumption}

For any $\varepsilon\in (0,1)$, we define
\begin{equation}
                                        \label{eq2.22}
\Omega^\varepsilon=\{x\in \Omega\,|\,\dist(x,\partial\Omega)>\varepsilon\}.
\end{equation}

We say that a domain is a Lipschitz domain if locally the boundary is the graph of a Lipschitz function in some coordinate system. More precisely,

\begin{assumption}[$\theta$]
                                    \label{assump3}
There is a constant $R_1\in (0,1]$ such that, for any $x\in \partial\Omega$ and $r\in(0,R_1]$, there exists a Lipschitz
function $\phi$: $\bR^{d-1}\to \bR$ such that
$$
\Omega\cap B_r(x_0) = \{x \in B_r(x_0)\, :\, x_1 >\phi(x')\}
$$
and
$$
\sup_{x',y'\in B_r'(x_0'),x' \neq y'}\frac {|\phi(y')-\phi(x')|}{|y'-x'|}\le \theta
$$
in some coordinate system.
\end{assumption}

We note that if $\Omega$ satisfies Assumption \ref{assump3} ($\theta$) with a constant $R_1$, then $\Omega$ satisfies Assumption \ref{assump11} with $R_1$ and $\theta$ in place of $R_0$ and $\gamma$, respectively.

Next we show that $\Omega^\varepsilon$ is a Lipschitz domain and Reifenberg flat with uniform parameters if $\Omega$ is Reifenberg flat. A related result was proved in \cite{BW07} which, in our opinion, contains a flaw.

\begin{lemma}
                            \label{lem3.11}
Let $\Omega$ satisfy Assumption \ref{assump11} ($\gamma$).
Then for any $\varepsilon\in (0,R_0/4)$, $\Omega^\varepsilon$ satisfies Assumption \ref{assump11} ($N_0\gamma^{1/2}$) with $R_0/2$ in place of $R_0$, and satisfies Assumption \ref{assump3} ($N_0\gamma^{1/2}$) with $R_1=\varepsilon$.
Here $N_0$ is a universal constant.
\end{lemma}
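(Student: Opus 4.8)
The plan is to realize $\partial\Omega^{\varepsilon}$ locally as a level set of the distance function $g(x):=\dist(x,\partial\Omega)=\dist(x,\Omega^{c})$ and to transfer the Reifenberg flatness of $\partial\Omega$ at scales comparable to $\varepsilon$ to $\partial\Omega^{\varepsilon}$. We may assume $\Omega\neq\bR^{d}$ and $\partial\Omega^{\varepsilon}\neq\emptyset$. We begin with the elementary consequences of Assumption~\ref{assump11}: for $x_{0}\in\partial\Omega$ and $0<\rho\le R_{0}$, in the (translated) coordinate system attached to $(x_{0},\rho)$ with $x_{0}$ at the origin one has $\Omega^{c}\cap B_{\rho}(x_{0})\subset\{y_{1}\le\gamma\rho\}$ and $\{y_{1}\le-\gamma\rho\}\cap B_{\rho}(x_{0})\subset\Omega^{c}$. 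Hence, for any $y$ with $\gamma\rho<y_{1}$ lying well inside $B_{\rho}(x_{0})$, one has $y_{1}-\gamma\rho\le g(y)\le y_{1}+\gamma\rho$; the nearest point $p\in\Omega^{c}$ of such a $y$ lies in $B_{\rho}(x_{0})$ with $p_{1}\le\gamma\rho$, whence the ``almost vertical'' bound $|y'-p'|^{2}\le g(y)^{2}-(y_{1}-p_{1})^{2}\le 4y_{1}\gamma\rho$. A comparison of $|y-p|^{2}$ before and after a vertical shift gives $g(y_{1}+h,y')^{2}-g(y_{1},y')^{2}\ge h(y_{1}+h-\gamma\rho)$ for $h>0$; combined with the $1$-Lipschitz property of $g$, this shows $g(\,\cdot\,,y')$ is strictly increasing on any vertical segment through the region $\{y_{1}\sim\varepsilon\}$, with difference quotients bounded below by an absolute positive constant.

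Now fix $z_{0}\in\partial\Omega^{\varepsilon}$. Since $\Omega^{\varepsilon}$ is open and $\overline{\Omega^{\varepsilon}}$ is disjoint from $\partial\Omega$, we have $z_{0}\in\Omega$ and $g(z_{0})=\varepsilon$, so there is $x_{0}\in\partial\Omega$ with $|z_{0}-x_{0}|=\varepsilon$. In the coordinate system attached to $(x_{0},\rho)$ with $\rho$ of the order of $8\varepsilon$ (admissible once $\varepsilon$ is a small fraction of $R_{0}$), the bounds above yield the sandwich
\begin{equation*}
\{y_{1}>\varepsilon+\gamma\rho\}\cap B_{\rho/2}(x_{0})\ \subset\ \Omega^{\varepsilon}\cap B_{\rho/2}(x_{0})\ \subset\ \{y_{1}>\varepsilon-\gamma\rho\}\cap B_{\rho/2}(x_{0}),
\end{equation*}
so $\partial\Omega^{\varepsilon}\cap B_{\rho/2}(x_{0})$, and in particular $z_{0}$, lies in the slab $\{\,|y_{1}-\varepsilon|\le\gamma\rho\,\}$, and $\Omega^{\varepsilon}$ coincides locally with $\{g>\varepsilon\}$. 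By the strict vertical monotonicity of $g$, for each $y'$ in a horizontal ball around $z_{0}'$ of radius comparable to $\varepsilon$ there is a unique $\phi(y')\in[\varepsilon-\gamma\rho,\varepsilon+\gamma\rho]$ with $g(\phi(y'),y')=\varepsilon$; near $z_{0}$ the set $\partial\Omega^{\varepsilon}$ is the graph $\{y_{1}=\phi(y')\}$ and $\Omega^{\varepsilon}$ is $\{y_{1}>\phi(y')\}$.

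To estimate the Lipschitz constant of $\phi$, take two graph points $A=(\phi(y'),y')$ and $C=(\phi(\tilde y'),\tilde y')$ with $|y'-\tilde y'|=h$. If $h\le\gamma^{1/2}\varepsilon$, insert $B=(\phi(y'),\tilde y')$: writing $g(A)^{2}=\varepsilon^{2}$ and estimating $g(B)$ through the nearest point of $A$ — whose horizontal displacement from $y'$ is $O(\gamma^{1/2}\varepsilon)$ by the almost-vertical bound — yields $|g(B)-\varepsilon|\le N\gamma^{1/2}h$, and the lower bound on the vertical difference quotients then gives $|\phi(\tilde y')-\phi(y')|\le N\gamma^{1/2}h$. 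If $h>\gamma^{1/2}\varepsilon$, use instead that $A$ and $C$ both lie in the slab of width $2\gamma\rho=O(\gamma\varepsilon)\le N\gamma^{1/2}h$. In either case $|\phi(\tilde y')-\phi(y')|\le N_{0}\gamma^{1/2}h$, which is Assumption~\ref{assump3} ($N_{0}\gamma^{1/2}$) with $R_{1}=\varepsilon$; restricting this graph to balls $B_{r}(z_{1})$ with $z_{1}\in\partial\Omega^{\varepsilon}$ near $z_{0}$ and $r\le\varepsilon$ also gives \eqref{eq1005_1} for $\Omega^{\varepsilon}$ at all such scales.

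For scales $r\in[\varepsilon,R_{0}/2]$ we rerun the sandwich with $\rho$ now comparable to $r+\varepsilon$: the same computation traps $\Omega^{\varepsilon}\cap B_{r}(z_{0})$ between the hyperplanes $\{y_{1}=(z_{0})_{1}\pm 2\gamma\rho\}$, and since $2\gamma\rho\le N\gamma r\le N_{0}\gamma^{1/2}r$ (using $r\ge\varepsilon$), this is exactly \eqref{eq1005_1} for $\Omega^{\varepsilon}$ at $z_{0}$ and scale $r$. Together with the previous paragraph this establishes Assumption~\ref{assump11} ($N_{0}\gamma^{1/2}$) with $R_{0}/2$ in place of $R_{0}$. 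The bookkeeping of admissible radii — keeping $\rho\le R_{0}$ while $B_{r}(z_{0})\subset B_{\rho/2}(x_{0})$ — is what accounts for the factors $R_{0}/4$ and $R_{0}/2$ in the statement. We expect the one genuinely delicate step to be the Lipschitz bound on $\phi$: the nearest-point comparison is efficient only for $h\lesssim\gamma^{1/2}\varepsilon$, and it must be patched with the crude slab bound in the complementary range in order to get a Lipschitz constant of size $\gamma^{1/2}$ rather than $O(1)$, uniformly in $h$. A minor additional point is to confirm that $\partial\Omega^{\varepsilon}$ is precisely the graph of $\phi$, with no extra components, which rests on the strict monotonicity of $g$ in the $y_{1}$-direction on the region under consideration.
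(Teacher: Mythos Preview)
Your proof is correct and follows essentially the same route as the paper's: the same two-case split on whether $|y'-\tilde y'|$ is below or above the threshold $\gamma^{1/2}\varepsilon$, the same key geometric input (the nearest boundary point has horizontal displacement $O(\gamma^{1/2}\varepsilon)$), and the same slab argument for scales $r\in[\varepsilon,R_0/2]$. The only difference is packaging: you phrase the small-$h$ estimate through the distance function $g$ and its vertical monotonicity, whereas the paper argues directly that the second point lies above the sphere $\partial B_\varepsilon(w)$ centered at the nearest boundary point and reads off the slope of that sphere; your version has the minor advantage of making the graph representation of $\partial\Omega^\varepsilon$ explicit.
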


\begin{proof}
We first prove that $\Omega^\varepsilon$ satisfies Assumption \ref{assump3} ($N_0\gamma^{1/2}$) with $R_1 = \varepsilon >0$.
In particular, we show that, for each $x_0 \in \partial\Omega^{\varepsilon}$, there exists a function $\phi:\bR^{d-1} \to \bR$
such that
\begin{equation}
							\label{eq0930}
\Omega^{\varepsilon} \cap B_{\varepsilon}(x_0)
= \{ x \in B_{\varepsilon}(x_0) : x_1 > \phi(x') \},
\quad
\frac{|\phi(y') - \phi(x')|}{|x'-y'|} \le N_0 \gamma^{1/2}
\end{equation}
for all $x',y' \in B'_{\varepsilon}(x_0')$, $x' \ne y'$.
Indeed, this implies Assumption \ref{assump3} ($N_0\gamma^{1/2}$) since for a fixed $x_0 \in \partial \Omega^\varepsilon$ we can use the same $\phi$ for all $r \in (0,\varepsilon)$.

Let $0$ be a point on $\partial \Omega$ such that $|x_0 - 0| = \varepsilon$.
That is, we have a coordinate system and $r_0 := 4 \varepsilon  < R_0$ such that $\partial \Omega \cap B_{r_0}(0)$ is trapped between
$\{x_1 = \gamma r_0\}$ and $\{x_1 = - \gamma r_0\}$. See Figure \ref{fg3}.
Note that $B_{\varepsilon}(x_0) \subset B_{r_0}(0)$
since, for $x \in B_{\varepsilon}(x_0)$,
$$
|x| \le |x-x_0| + |x_0| < 2\varepsilon < r_0 = 4\varepsilon.
$$
We show that for any $y, z \in \partial \Omega^\varepsilon \cap B_{\varepsilon}(x_0)$
\begin{equation}
							\label{eq1003_1}
|y_1 - z_1| \le N_0 \gamma^{1/2} |y'-z'|,
\end{equation}
which implies \eqref{eq0930}.
For $y, z \in \partial \Omega^\varepsilon \cap B_{\varepsilon}(x_0)$,
we see that
\begin{equation}
							\label{eq1003_2}
\varepsilon - \gamma r_0 < y_1 < \varepsilon + \gamma r_0,
\quad
\varepsilon - \gamma r_0 < z_1 < \varepsilon + \gamma r_0.
\end{equation}
Without loss of generality we assume that $y_1 \ge z_1$.
\begin{figure}[b]
\centering
\includegraphics{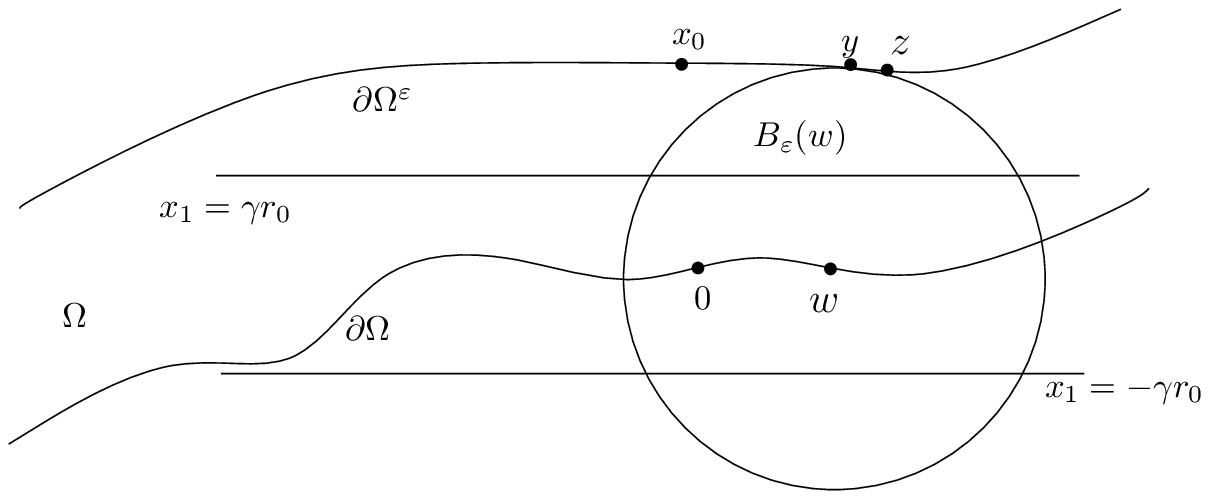}
\caption{}\label{fg3}
\end{figure}
To prove \eqref{eq1003_1}, let us consider two cases.
First, let $\varepsilon \gamma^{1/2} \le |y'-z'|$.
In this case, due to the inequalities \eqref{eq1003_2}, we have
$$
\frac{|y_1-z_1|}{|y'-z'|} \le \frac{2\gamma r_0}{\varepsilon \gamma^{1/2}} = 8 \gamma^{1/2},
$$
which proves \eqref{eq1003_1}.

Now let $|y'-z'| \le \varepsilon \gamma^{1/2}$.
In this case, find $w \in \partial \Omega$ such that $|y-w| = \varepsilon$.
Note that $B_\varepsilon(w) \subset B_{r_0}(0)$ since
$$
|w| \le |w-y| + |y-x_0| + |x_0| < 3 \varepsilon < r_0 = 4\varepsilon.
$$
We estimate $|w'-z'|$ as follows.
Using the fact that
$-\gamma r_0 < w_1 < \gamma r_0$
and the first inequality in \eqref{eq1003_2},
we have
$$
|y_1-w_1| \ge \varepsilon - 2 \gamma r_0 >0.
$$
Thus using the equality
$$
|w'-y'|^2 + |w_1-y_1|^2 = \varepsilon^2,
$$
we see that
$$
|w'-y'|^2 \le \varepsilon^2 - (\varepsilon - 2\gamma r_0)^2
\le 4 \varepsilon \gamma r_0 = 4^2 \varepsilon^2 \gamma.
$$
Hence
$$
|w'-z'| \le |w'-y'| + |y'-z'| <
5\varepsilon \gamma^{1/2}.
$$
Since $y_1 \ge z_1$, $|w'-z'| \le 5\varepsilon \gamma^{1/2}$,
and $z$ is above the ball $B_\varepsilon(\omega)$ (recall that $\gamma<1/50$),
it follows that
$$
\frac{y_1 - z_1}{|y'-z'|}
\le \frac{d}{dx} \left(-\sqrt{\varepsilon^2 - x^2}\right) \bigg|_{x=5 \varepsilon \gamma^{1/2}}
\le N_0 \gamma^{1/2}.
$$
Thus \eqref{eq1003_1} is proved.
Therefore, we have proved that $\Omega^\varepsilon$ satisfies Assumption \ref{assump3} ($N_0\gamma^{1/2}$) with $R_1 = \varepsilon$.
As pointed out earlier, this shows that $\Omega^\varepsilon$ satisfies \eqref{eq1005_1} for all $0 < r < \varepsilon$.
Thus in order to completely prove that $\Omega^\varepsilon$ satisfies Assumption \ref{assump11} ($N_0\gamma^{1/2}$) with $R_0/2$, we need to prove that
$\Omega^\varepsilon$ satisfies \eqref{eq1005_1} for $\varepsilon \le r < R_0/2$.

Let $\varepsilon \le r < R_0/2$ and $x_0 \in \partial\Omega^{\varepsilon}$. Find $0 \in \partial\Omega$
such that $|x_0 - 0| = \varepsilon$.
Then
$$
B_r(x_0) \subset B_R(0),
$$
where
$R = \varepsilon+r < R_0$.
Then the first coordinate $x_1$ of the point $x \in \partial \Omega^{\varepsilon} \cap B_r(x_0)$ is trapped by
$$
\varepsilon - \gamma R < x_1 < \varepsilon + \gamma R,
$$
which is the same as
$$
\varepsilon - \gamma ( \varepsilon+r) < x_1 < \varepsilon + \gamma ( \varepsilon+r).
$$
Note that
$$
\gamma (\varepsilon + r) \le 2 \gamma r \le 2 \gamma^{1/2}r.
$$
Thus each $x_1$ of $x \in \partial\Omega^\varepsilon \cap B_r(x_0)$ satisfies
\eqref{eq1005_1} with $2\gamma^{1/2}$ in place of $\gamma$.
The lemma is proved.
\end{proof}

The next approximation result is well known. See, for instance, \cite{Lie85}.

\begin{lemma}
                                \label{lem3.19}
Let $\Omega$ be a domain in $\bR^d$ and satisfy Assumption \ref{assump3} ($\theta$) with some $\theta>0$ and $R_1\in (0,1]$. Then there exists a sequence of expanding smooth subdomains $\Omega^k,k=1,2,\ldots$, such that $\Omega^k\to \Omega$ as $k\to \infty$ and each $\Omega^k$ satisfies Assumption \ref{assump3} ($N_0\theta$) with $R_1/2$ in place of $R_1$. Here $N_0$ is a universal constant.
\end{lemma}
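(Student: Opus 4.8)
The strategy is the classical one — flatten, regularize, patch, and take regular level sets — but carried out so that the Lipschitz character degrades only by a universal factor. I would first build a single function $\Psi\in C^\infty(\Omega)$ such that $\Psi>0$ in $\Omega$, $\Psi(x)\to 0$ uniformly as $x\to\partial\Omega$, $\Psi\asymp\dist(x,\partial\Omega)$ in a collar of $\partial\Omega$, and, in every coordinate system of Assumption \ref{assump3} for $\Omega$, $\partial_{y_1}\Psi\ge 1-N_0\theta$ and $|\nabla_{y'}\Psi|\le N_0\theta$ on that collar. Granting such a $\Psi$, one applies Sard's theorem to pick regular values $t_k\downarrow 0$ and sets $\Omega^k:=\{x\in\Omega:\Psi(x)>t_k\}$. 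Then $\partial\Omega^k=\{\Psi=t_k\}$ is a regular level set, so $\Omega^k$ is smooth; $t_k\downarrow 0$ makes the $\Omega^k$ expanding; $\Psi\asymp\dist$ near $\partial\Omega$ together with $\Psi>0$ in $\Omega$ gives $\Omega^k\Subset\Omega$ and $\bigcup_k\Omega^k=\Omega$, i.e. $\Omega^k\to\Omega$; and the gradient bounds on $\Psi$, via the implicit function theorem, show that in each chart $\{\Psi>t_k\}$ is the region above a graph with Lipschitz constant $\le N_0\theta/(1-N_0\theta)\le N_0\theta$, i.e. $\Omega^k$ satisfies Assumption \ref{assump3} ($N_0\theta$), the admissible scale shrinking from $R_1$ to $R_1/2$ to leave room for the covering and regularization scales below.

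To construct $\Psi$: fix a locally finite cover of a collar of $\partial\Omega$ by balls $B_{R_1/8}(x_j)$, $x_j\in\partial\Omega$, whose doubled balls have overlap multiplicity $\le N(d)$, and a subordinate smooth partition of unity $\{\zeta_j\}$ with $\sum_j\zeta_j\equiv 1$ on the collar and $|D\zeta_j|\le N(d)/R_1$. In the $j$-th chart write $\Omega\cap B_{R_1}(x_j)=\{y_1>\phi_j(y')\}$ with $\mathrm{Lip}(\phi_j)\le\theta$ (extend $\phi_j$ to $\bR^{d-1}$ with the same constant), and let $\Phi_j$ be the Stein regularization of $\phi_j$: average $\phi_j$ over the Whitney cubes of $\{y_1>\phi_j(y')\}$ and glue with a smooth partition of unity adapted to those cubes. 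The key point is that each derivative of $\Phi_j$ is controlled by $\theta$ rather than by $1$, namely $|\nabla_{y'}\Phi_j|+|\partial_{y_1}\Phi_j|\le N_0\theta$ and $|D^k\Phi_j|\le N_0\theta\,h^{1-k}$ with $h=y_1-\phi_j(y')$, because $D\Phi_j$ only sees oscillations of the Lipschitz function $\phi_j$ over sets of diameter $\asymp h$; moreover $|\Phi_j-\phi_j|\le N_0\theta\,h$. Hence $F_j(x):=y_1-\Phi_j(y_1,y')$ is $C^\infty$ on $\Omega\cap B_{R_1/2}(x_j)$, extends by $0$ to $\partial\Omega$, satisfies $F_j\asymp\dist(x,\partial\Omega)$, and $DF_j=e_1+O(\theta)$ in the $j$-th coordinates. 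Set $\Psi:=\sum_j\zeta_jF_j$ on the collar, and extend $\Psi$ smoothly and positively into the interior with $\Psi\asymp\min(\dist(x,\partial\Omega),R_1)$.

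The crux — and what I expect to be the main obstacle — is the gradient estimate for $\Psi$ in a fixed chart of Assumption \ref{assump3} for $\Omega$, which is where the smallness of $\theta$ is used and where one must guarantee that $N_0$ does not secretly absorb the geometry of $\Omega$. The ingredient is that for two overlapping cover balls the attached coordinate systems differ by an angle $\le N_0\theta$ (a portion of $\partial\Omega$ of extent $\sim R_1$ cannot be a $\theta$-Lipschitz graph over two hyperplanes unless they are $N_0\theta$-close), so that, read in one fixed chart, every $DF_j$ is still $e_1+O(\theta)$. Now write $D\Psi=\sum_j\zeta_jDF_j+\sum_j(D\zeta_j)(F_j-F_{j_0})$ for a fixed $j_0$ with $x\in B_{R_1/8}(x_{j_0})$, using $\sum_jD\zeta_j=0$: the first sum is a convex combination of the $DF_j$, hence $e_1+O(\theta)$; for the second, $D(F_j-F_{j_0})=O(\theta)$ and $F_j-F_{j_0}$ vanishes on $\partial\Omega$, so $|F_j-F_{j_0}|\le N_0\theta R_1$ throughout the collar, and with $|D\zeta_j|\le N(d)/R_1$ and the bounded overlap the correction is $O(N_0\theta)$. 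This yields $\partial_{y_1}\Psi\ge 1-N_0\theta>0$ and $|\nabla_{y'}\Psi|\le N_0\theta$ on the collar, as required. Everything else — the Whitney covering estimates, Sard's theorem, and checking the four listed properties of $\Omega^k$ — is routine once this bound is in hand.
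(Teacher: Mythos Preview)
The paper does not actually prove this lemma; it simply records it as ``well known'' and cites Lieberman's \emph{Regularized distance and its applications} \cite{Lie85}. Your proposal is precisely a sketch of the regularized-distance construction carried out in that reference: build a smooth $\Psi$ on $\Omega$ with $\Psi\asymp\dist(\cdot,\partial\Omega)$ whose gradient in each local chart is $e_1+O(\theta)$, then take regular sublevel sets via Sard's theorem. So your approach coincides with the one the paper is implicitly invoking, and the outline is sound.

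One small remark on your write-up: the key gradient bound you isolate --- that in overlapping charts the coordinate frames differ by an angle $\le N_0\theta$, so that $\sum_j(D\zeta_j)(F_j-F_{j_0})=O(\theta)$ after cancellation --- is exactly the point Lieberman handles, and your heuristic for it is correct. If you want to turn this into a self-contained proof rather than a citation, the place to be most careful is the claim $|\partial_{y_1}\Phi_j|\le N_0\theta$ for the Stein regularization: this requires that the mollification scale be tied to $h=y_1-\phi_j(y')$ (Whitney-type) rather than fixed, which you do say but which is easy to get wrong in the details.
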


\section{Systems on a half space}
                            \label{sec4}

\subsection{Estimates of mean oscillations}
Now we prove the following estimate of mean oscillations.
As in Section \ref{sec_aux}, we assume that all the lower-order coefficients of $\cL$ are zero. For $\vf_\alpha= (f_\alpha^1, \ldots, f_\alpha^n)^{\text{tr}}$, we denote
$$
F=\sum_{|\alpha|\le m}\lambda^{\frac {|\alpha|} {2m}-\frac 1 2}|\vf_\alpha|.
$$

\begin{proposition}
                \label{prop7.9}
Let $x_0\in \overline{\bR^d_+}$, $\gamma \in (0,1/4)$, $r\in (0,\infty)$, $\kappa\in [64,\infty)$, $\lambda\ge 0$, $\nu \in (2,\infty)$, $\nu'=2\nu/(\nu-2)$, and
$\vf_\alpha= (f_\alpha^1, \ldots, f_\alpha^n)^{\text{tr}} \in L_{2,\text{loc}}(\overline{\bR^{d}_+})$.
Assume that $\kappa r\le R_0$ and $\vu\in W_{\nu,\text{loc}}^m(\overline{\bR^d_+})$ satisfies
\begin{equation}
                    \label{eq11.13}
(-1)^m\cL \vu+\lambda \vu=\sum_{|\alpha|\le m}D^\alpha \vf_\alpha
\end{equation}
in $B^+_{\kappa r}(x_0)$ with the conormal derivative condition on $\Gamma_{\kappa r}(x_0)$. Then under Assumption \ref{assumption20100901} ($\gamma$),
there exists a function $U^B$ depending on
$B^+:=B^+_{\kappa r}(x_0)$ such that $N^{-1}U\le U^B\le NU$ and
$$
\big(|U^B-(U^B)_{B_r^+(x_0)}|\big)_{B_r^+(x_0)}
\le N(\kappa^{-1/2}+(\kappa \gamma)^{1/2} \kappa^{d/2}) \big(U^2\big)_{B^+_{\kappa r}(x_0)}^{1/2}
$$
\begin{equation}
            \label{eq5.42}
+N\kappa^{d/2}\left[(F^2)_{B^+_{\kappa r}(x_0)}^{1/2}+\gamma^{1/\nu'}
(U^\nu)_{B^+_{\kappa r}(x_0)}^{1/\nu}\right],
\end{equation}
where $N=N(d,m,n,\delta,\nu)>0$.
\end{proposition}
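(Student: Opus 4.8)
\medskip\noindent\textbf{Plan of proof.}
I would follow the scheme of \cite{DK10}, splitting into an \emph{interior case} ($B_{\kappa r}(x_0)\subset\bR^d_+$) and a \emph{boundary case}, in which, after harmless changes of $\kappa$, $r$, and $R_0$, one may assume $x_0\in\partial\bR^d_+$. By Assumption \ref{assumption20100901}($\gamma$) applied at the appropriate scale there are $\cT_B=\rho x+\xi\in\bO$ and simple coefficients $\{\bar a_{\alpha\beta}(y_1)\}\in\bA$, in the variable $y=\cT_B x$, with $\sup_{|\alpha|=|\beta|=m}(|a_{\alpha\beta}-\bar a_{\alpha\beta}|)_{B}\le\gamma$ on the ball $B$ under consideration; in the boundary case $\rho_{11}\ge\cos(\gamma/2)$, so the image of $\partial\bR^d_+$ is a hyperplane through the origin whose normal lies within angle $\gamma/2$ of the $y_1$-axis. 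In the interior case I work directly on $B_{\kappa r/2}(x_0)$ in the $y$-coordinates. In the boundary case, since the flat boundary is not quite perpendicular to the measurable direction $y_1$, I would invoke Lemma \ref{lem4.2}: after replacing $\vf_\alpha$ by suitable $\tilde\vf_\alpha$ with
$$
\sum_{|\alpha|\le m}\lambda^{\frac{|\alpha|}{2m}-\frac12}\bigl(|\tilde\vf_\alpha-\vf_\alpha|^2\bigr)_{B^+}^{1/2}\le N(\kappa\gamma)^{1/2}\bigl(U^2\bigr)_{B^+}^{1/2},
$$
the function $\vu$ itself solves the conormal problem for $(-1)^m\bar\cL_0+\lambda$ on a half ball of radius comparable to $\kappa r$ whose flat boundary is $\{y_1=0\}$, perpendicular to $y_1$; here $\bar\cL_0$ denotes the operator with leading coefficients $\bar a_{\alpha\beta}(y_1)$. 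In either case, comparing the equations for $\vu$ and $\bar\cL_0$ gives, in the $y$-coordinates,
$$
(-1)^m\bar\cL_0\vu+\lambda\vu=(-1)^m\sum_{|\alpha|=|\beta|=m}D^\alpha\bigl((\bar a_{\alpha\beta}-a_{\alpha\beta})D^\beta\vu\bigr)+\sum_{|\alpha|\le m}D^\alpha\tilde\vf_\alpha
$$
on the relevant (half-)ball, with $\tilde\vf_\alpha=\vf_\alpha$ in the interior case.

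\medskip
Next I would decompose $\vu=\vw+\vv$: let $\vv$ be the $W_2^m$-solution — on $\bR^d$ (interior case), or on $\bR^d_+$ with the conormal condition (boundary case) — of the displayed equation with the right-hand side multiplied by the indicator function of $B^+_{\kappa r/8}$; existence and the $L_2$-bound come from Theorem \ref{theorem08061901}. Then $\vw:=\vu-\vv$ solves the homogeneous system $(-1)^m\bar\cL_0\vw+\lambda\vw=0$ on $B^+_{\kappa r/8}$ with the conormal condition on the flat part of its boundary. For $\vv$, Theorem \ref{theorem08061901} together with H\"older's inequality in the form $(|a_{\alpha\beta}-\bar a_{\alpha\beta}|^{\nu'})_B\le N\gamma$ (using the $L_\infty$-bound on the coefficients and $1/\nu'=1/2-1/\nu$), and $\|\tilde\vf_\alpha\|\le\|\vf_\alpha\|+\|\tilde\vf_\alpha-\vf_\alpha\|$, give after normalizing by $|B^+|$
$$
\bigl(U_\vv^2\bigr)_{B^+_{\kappa r}}^{1/2}\le N\bigl(F^2\bigr)_{B^+_{\kappa r}}^{1/2}+N\gamma^{1/\nu'}\bigl(U^\nu\bigr)_{B^+_{\kappa r}}^{1/\nu}+N(\kappa\gamma)^{1/2}\bigl(U^2\bigr)_{B^+_{\kappa r}}^{1/2},
$$
$U_\vv$ being the analogue of $U$ for $\vv$ (the last term present only in the boundary case, coming from $\tilde\vf_\alpha-\vf_\alpha$). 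For $\vw$, whose leading coefficients depend on $y_1$ alone and whose right-hand side vanishes, Lemma \ref{cor3.5} (interior) or Lemma \ref{lem6.7} (boundary), rescaled from radius $2$ to radius $\kappa r/8$, yields, with $\Theta_\vw:=\sum_{|\beta|=m}\bar a_{\bar\alpha\beta}D^\beta\vw$ and $U'_\vw$ the analogue of $U'$ for $\vw$,
$$
\bigl[\,U'_\vw+|\Theta_\vw|\,\bigr]_{C^{1/2}(B^+_{\kappa r/16})}\le N(\kappa r)^{-1/2}\bigl(U_\vw^2\bigr)_{B^+_{\kappa r/8}}^{1/2},\qquad
\|U_\vw\|_{L_\infty(B^+_{\kappa r/16})}\le N\bigl(U_\vw^2\bigr)_{B^+_{\kappa r/8}}^{1/2},
$$
and $N^{-1}U_\vw\le U'_\vw+|\Theta_\vw|\le NU_\vw$ by \eqref{eq12.01} applied to $\vw$ (which recovers $D^{\bar\alpha}\vw$).

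\medskip
Finally I would define $U^B$, pulled back to the original coordinates, to be the truncation of $U'_\vw+|\Theta_\vw|$ to the interval $[N_1^{-1}U,\,N_1U]$ with $N_1=N_1(d,m,n,\delta)$ large, so that automatically $N_1^{-1}U\le U^B\le N_1U$. Since $|D^\alpha\vu|\le|D^\alpha\vw|+|D^\alpha\vv|$ forces $|U-U_\vw|\le U_\vv$ pointwise, and $U'_\vw+|\Theta_\vw|$ is comparable to $U_\vw$, choosing $N_1$ past the comparability constants gives $|U^B-(U'_\vw+|\Theta_\vw|)|\le N_1U_\vv$ pointwise, whence
$$
\bigl(|U^B-(U^B)_{B^+_r(x_0)}|\bigr)_{B^+_r(x_0)}\le Nr^{1/2}\bigl[\,U'_\vw+|\Theta_\vw|\,\bigr]_{C^{1/2}(B^+_r)}+2N_1\bigl(U_\vv\bigr)_{B^+_r}.
$$
Using $B^+_r\subset B^+_{\kappa r/16}$ ($\kappa\ge64$), the H\"older bound, $U_\vw\le U+U_\vv$, and the comparison of averages over $B^+_{\kappa r/8}$ and $B^+_{\kappa r}$, the first term is $\le N\kappa^{-1/2}\bigl[(U^2)_{B^+_{\kappa r}}^{1/2}+(U_\vv^2)_{B^+_{\kappa r}}^{1/2}\bigr]$; and $(U_\vv)_{B^+_r}\le(U_\vv^2)_{B^+_r}^{1/2}\le N\kappa^{d/2}(U_\vv^2)_{B^+_{\kappa r}}^{1/2}$ since $|B^+_{\kappa r}|/|B^+_r|\le2\kappa^d$. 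Inserting the bound for $(U_\vv^2)_{B^+_{\kappa r}}^{1/2}$ from the previous step and absorbing $\kappa^{-1/2}\le\kappa^{d/2}$ yields \eqref{eq5.42}.

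\medskip
The crux, and the one genuinely new point over \cite{DK10}, is the boundary case — specifically Lemma \ref{lem4.2} and the bound on $\tilde\vf_\alpha-\vf_\alpha$: one must show that a perturbation of the right-hand side of controlled size $O\bigl((\kappa\gamma)^{1/2}(U^2)_{B^+}^{1/2}\bigr)$ lets $\vu$ \emph{itself} — which, unlike a solution of the Dirichlet problem, does not vanish on $\partial\Omega$, so the cut-off/Hardy argument of \cite{DK10} is unavailable — satisfy the conormal problem on a subdomain whose flat boundary is exactly perpendicular to the measurable direction, making the half-space H\"older estimate Lemma \ref{lem6.7} applicable. The remainder is routine bookkeeping: tracking the powers of $\kappa$ through the volume ratios and verifying the two-sided comparison of $U^B$ with $U$.
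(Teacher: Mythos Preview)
Your outline follows the paper's strategy---the interior/boundary split, the appeal to Lemma~\ref{lem4.2}, the decomposition into a homogeneous and an inhomogeneous piece, and the H\"older estimate for the former---but in the boundary case there are two concrete gaps.

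First, the bound you attribute to Lemma~\ref{lem4.2} is not correct. The extra terms $\vg_\alpha$ and $\vh$ produced by that lemma involve $D^\beta\vu$ and $\vu$ restricted (after reflection) to the thin region $\Omega^*\subset\{-2\gamma R<y_1<0\}$, whose relative measure in $B_R^+$ is $O(\gamma)$ (not $O(\kappa\gamma)$). There is no way to bound the $L_2$ average of $1_{\Omega^*}|D^m\vu|$ by a small multiple of $(U^2)^{1/2}_{B^+}$; one must invoke H\"older with the higher exponent $\nu$, which yields $N\gamma^{1/\nu'}(U^\nu)^{1/\nu}$ instead. Thus the correct bound on what you call $(U_\vv^2)^{1/2}$ is the paper's \eqref{eq21.52h}, with no $(U^2)^{1/2}$ contribution at all.

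Second, and more seriously, you have lost track of the domain after the rotation. In the $y$-coordinates the original set $B_r^+(x_0)$ becomes $\Omega_r(y_0)$, a half-ball cut by the \emph{tilted} hyperplane $\partial\Omega$, whereas your decomposition $\vu=\vw+\vv$ and the H\"older estimate from Lemma~\ref{lem6.7} live only on the \emph{upright} half-ball $B_{R/4}^+(\hat y_0)\subset\{y_1>0\}$. The sliver $\cD_1:=\Omega_r(y_0)\cap\{y_1<0\}$ is not covered---indeed your $\vw$, and hence your $U^B$, is not even defined there. The paper avoids this by setting $U^B:=U'+|\Theta|$ built directly from $\vu$ (with $\Theta=\sum_{|\beta|=m}\bar a_{\bar\alpha\beta}D^\beta\vu$), which is globally defined and comparable to $U$ by \eqref{eq12.01} without any truncation device. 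It then compares the oscillation over $\Omega_r(y_0)$ to that over $\cD_2=B_r^+(y_0)$ via the triangle inequality, and bounds the sliver contribution by $(1_{\cD_1}U)_{\Omega_r(y_0)}\le N(\kappa\gamma)^{1/2}\kappa^{d/2}(U^2)^{1/2}_{\Omega_R}$, using $|\cD_1|/|\cD_2|\le N\kappa\gamma$ (the strip has width $\sim\gamma\kappa r$ against the small ball of radius $r$). This sliver step---not the Lemma~\ref{lem4.2} modification---is the actual source of the $(\kappa\gamma)^{1/2}\kappa^{d/2}(U^2)^{1/2}$ term in \eqref{eq5.42}. The paper also disposes separately of the trivial regime $\kappa\gamma>1/10$, which you do not mention.
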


The proof of the proposition is split into two cases.

{\em Case 1: the first coordinate of $x_0$ $\ge \kappa r/16$.} In this case, we have
$$
B_r^+(x_0)=B_r(x_0)\subset B_{\kappa r/16}(x_0)\subset \bR^{d}_+.
$$
With $B_{\kappa r/16}$ in place of $B^+_{\kappa r}$ in the right-hand side of \eqref{eq5.42}, the problem is reduced to an interior mean oscillation estimate. Thus the proof can be done in the same way as in Proposition 7.10 in \cite{DK10} using Theorem \ref{theorem08061901} and Lemma \ref{cor3.5}.

{\em Case 2: $0\le$ the first coordinate of $x_0$ $< \kappa r/16$.}
Notice that in this case,
\begin{equation}
                                \label{eq22.15}
B_r^+(x_0)\subset B^+_{\kappa r/8}(\hat x_0) \subset B^+_{\kappa r/4}(\hat x_0)\subset B^+_{\kappa r/2}(\hat x_0)
\subset B^+_{\kappa r}(x_0),
\end{equation}
where $\hat x_0:=(0,x_0')$.
Denote $R=\kappa r/2(< R_0)$. Because of Assumption \ref{assumption20100901}, after a linear transformation, which is an orthogonal transformation determined by $B=B_R(\hat x_0)$ followed by a translation downward,  we may assume
\begin{equation}
                    \label{eq17.29}
B_R^+(\hat{y}_0)
 \subset\Omega_{R}(\hat{y}_0)
 \subset \{(y_1,y'):-2\gamma R< y_1\}\cap B_{R}(\hat{y}_0)
\end{equation}
and
\begin{equation}
                                \label{eq17_50}
\sup_{|\alpha|=|\beta|=m}\int_{B_R(\hat{y}_0)} |a_{\alpha\beta}(x) - \bar{a}_{\alpha\beta}(y_1)| \, dy \le \gamma |B_R|.
\end{equation}
\begin{figure}[h]
\centering
\includegraphics{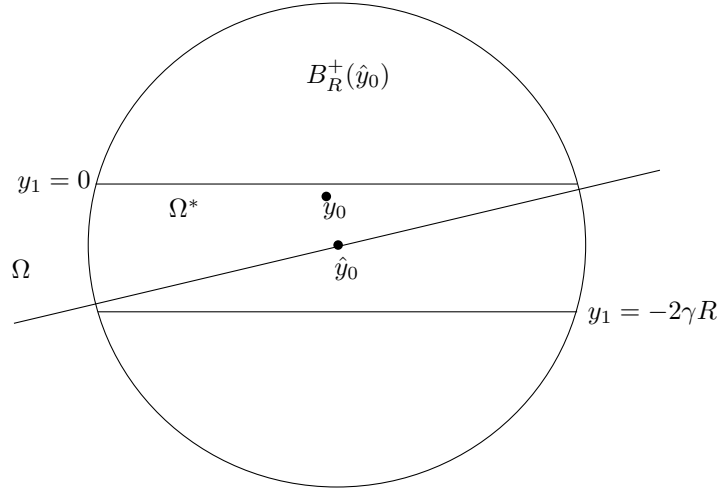}
\caption{$\hat y_0 $ (or $y_0$) is the new coordinates of $\hat x_0$ (or $x_0$).}\label{fg1}
\end{figure}
Here $\Omega$ is the image of $\bR^d_+$ under the linear transformation
and $\hat y_0 $ (or $y_0$) is the new coordinates of $\hat x_0$ (or $x_0$). See Figure \ref{fg1}.
Then \eqref{eq22.15} becomes
\begin{equation}
                                \label{eq22.16}
\Omega_r(y_0)\subset \Omega_{R/4}(\hat y_0) \subset \Omega_{R/2}(\hat y_0)
\subset \Omega_{R}(\hat y_0)
\subset \Omega_{\kappa r}(y_0).
\end{equation}

For convenience of notation, in the new coordinate system we still denote the corresponding unknown function, the coefficients, and the data by $\vu$, $a_{\alpha\beta}$, $\bar{a}_{\alpha\beta}$, and $\vf_\alpha$, respectively.
Note that, without loss of generality, we may assume that the coefficients $\bar{a}_{\alpha\beta}(y_1)$ in \eqref{eq17_50} are infinitely differentiable.

Below we present a few lemmas, which should be read as parts of the proof of the second case.

Let us introduce the following well-known extension operator.
Let $\{c_1, \cdots, c_{m}\}$ be the solution to the system:
\begin{equation}							\label{eq0514-02}
\sum_{k=1}^{m} \left(-\frac{1}{k}\right)^j c_k = 1,
\quad
j=0,\cdots,m-1.	
\end{equation}
For a function $w$ defined on $\bR^d_+$, set
\begin{equation*}
\cE_{m} w =
\left\{
\begin{aligned}
&w(y_1,y')	\quad \text{if} \quad y_1 > 0\\
&\sum_{k=1}^{m} c_k w(-\frac{1}{k}y_1,y') \quad \text{otherwise}
\end{aligned}
\right..
\end{equation*}
Note that
$\cE_{m} w \in W^{m}_{2,\text{loc}}(\bR^d)$ if $w \in W^{m}_{2,\text{loc}}(\overline{\bR^d_+})$.
Indeed, by \eqref{eq0514-02}
$$
D_1^j \left(\sum_{k=1}^{m} c_k w(-\frac{1}{k}y_1,y')\right)\bigg|_{y_1=0}
= \sum_{k=1}^{m} \left(-\frac1k\right)^j c_k D_1^jw(0,y')
= D_1^jw(0,y')
$$
for $j = 0, \cdots, m-1$.

Denote $\Omega^*=\bR^d_-\cap\Omega\cap B_R(\hat y_0)$. Recall that in the new coordinate system we still denote the corresponding unknown function, the coefficients, and the data by $\vu$, $a_{\alpha\beta}$, $\bar{a}_{\alpha\beta}$, and $\vf_\alpha$, respectively. Throughout the end of this subsection, the derivatives are taken with respect to the $y$-coordinates.
The following lemma contains the key observation in the proof of Proposition \ref{prop7.9}.

\begin{lemma}
                            \label{lem4.2}
The function $\vu$ satisfies
\begin{align}
                                    \label{eq17.23b}
(-1)^m \cL_0 \vu+\lambda \vu&=(-1)^m \sum_{|\alpha|=|\beta|=m}D^\alpha\left(
(\bar a_{\alpha\beta}- a_{\alpha\beta})D^\beta \vu\right)\\
&\quad + \sum_{|\alpha|\le m} D^\alpha \tilde \vf_\alpha+\sum_{|\alpha|=m}D^\alpha \vg_\alpha-\lambda \vh\nonumber
\end{align}
in $B_R^+(\hat y_0)$ with the conormal derivative boundary condition on $\Gamma_R(\hat y_0)$.
In the above, $\cL_0$ is the differential operator with the coefficients $\bar a_{\alpha\beta}$ from \eqref{eq17_50}, and
\begin{align*}
\tilde \vf_\alpha&=\vf_\alpha+c_{\alpha,k}
\vf_\alpha(-ky_1,y')\,1_{(-ky_1,y')\in \Omega^*},\\
\vg_\alpha&=c_{\alpha,k}(-1)^{m+1}\sum_{|\beta|=m}\sum_{k=1}^m
a_{\alpha\beta}(-ky_1,y') (D^{\beta}\vu)(-ky_1,y')\,1_{(-ky_1,y')\in \Omega^*},\\
\vh&=\sum_{k=1}^m
kc_k\vu(-ky_1,y')\, 1_{(-ky_1,y')\in \Omega^*},
\end{align*}
where
$c_{\alpha,k}=(-1)^{\alpha_1}c_kk^{-\alpha_1+1}$ are constants.
\end{lemma}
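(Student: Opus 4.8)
The plan is to take the weak formulation \eqref{eq3.02} of the original equation \eqref{eq9.15} on $\bR^d_+$ (with zero lower-order coefficients) and restrict attention to test functions $\phi$ supported in $B_R(\hat y_0)$ and extend across the flat hyperplane $\{y_1=0\}$ using the reflection operator $\cE_m$. Concretely, for $\psi \in W^m_q\big(B_R^+(\hat y_0)\big)$ supported on $B_R^+(\hat y_0)\cup\Gamma_R(\hat y_0)$, the extension $\phi:=\cE_m\psi$ lies in $W^m_q\big(B_R(\hat y_0)\big)$ by the remark following the definition of $\cE_m$ (the coefficients $c_k$ solve \eqref{eq0514-02}, so the first $m-1$ normal traces match), and — after multiplying by a cut-off — $\phi$ may be used as a legitimate test function in \eqref{eq3.02}. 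The key point is that $\phi$ restricted to $\Omega^*=\bR^d_-\cap\Omega\cap B_R(\hat y_0)$ is, by construction, the finite sum $\sum_k c_k\psi(-\tfrac1k y_1,y')$, so one can compute $D^\alpha\phi$ on $\Omega^*$ explicitly via the chain rule: each $y_1$-derivative produces a factor $-k^{-1}$, hence $D^\alpha\phi(y)=\sum_k(-1)^{\alpha_1}k^{-\alpha_1}c_k\,(D^\alpha\psi)(-ky_1,y')$ on $\Omega^*$ (note the inverse map $y_1\mapsto -ky_1$), which is exactly where the constants $c_{\alpha,k}=(-1)^{\alpha_1}c_k k^{-\alpha_1+1}$ — the extra $k$ coming from the change-of-variables Jacobian in the integral over $\Omega^*$ — originate.

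The main steps, in order, are: (1) Write $\int_{\bR^d_+}(\cdots)\,dx = \vF(\cE_m\psi)$ where the left side involves only integration over $\bR^d_+$ and the right side, being a bounded functional, splits as an integral over $B_R^+$ plus an integral over $\Omega^*$ (the part of $\cE_m\psi$'s support lying in $y_1<0$); the $\Omega^*$ piece is where $\vF$ is applied to the reflected test function. (2) In the $\Omega^*$ integral, perform the substitution $y_1\mapsto -ky_1$ to pull everything back to $B_R^+$ (this is where $k^{-\alpha_1+1}$ appears), and collect terms: the original right-hand side $\sum D^\alpha\vf_\alpha$ contributes the reflected-source correction to $\tilde\vf_\alpha$; the leading bilinear term $\sum_{|\alpha|=|\beta|=m}(-1)^{m+|\alpha|}D^\alpha\phi\cdot a_{\alpha\beta}D^\beta\vu$ evaluated on $\Omega^*$ contributes $\vg_\alpha$ after integration by parts to move the $\psi$-derivatives back into divergence form; and the zeroth-order term $\lambda\phi\cdot\vu$ on $\Omega^*$ contributes $-\lambda\vh$ (the sign and the factor $kc_k$ again coming from the Jacobian). (3) Replace $a_{\alpha\beta}$ by $\bar a_{\alpha\beta}$ in the leading term on $B_R^+$, moving the difference $\bar a_{\alpha\beta}-a_{\alpha\beta}$ to the right-hand side as the first term in \eqref{eq17.23b}. (4) Read off that the resulting identity is precisely the weak formulation of \eqref{eq17.23b} with the conormal condition on $\Gamma_R(\hat y_0)$, valid for all admissible $\psi$.

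The step I expect to be the main obstacle is step (2): verifying that after the change of variables and integration by parts, all the boundary terms on $\Gamma_R(\hat y_0)$ generated by reflecting $\psi$ across $\{y_1=0\}$ cancel, so that no spurious boundary contribution survives and the conormal condition is genuinely preserved. This is exactly where the defining relations \eqref{eq0514-02} for $\{c_k\}$ are used a second time — they guarantee that the normal traces $D_1^j(\cE_m\psi)|_{y_1=0^-}=D_1^j\psi|_{y_1=0^+}$ agree for $j=0,\dots,m-1$, so $\cE_m\psi$ is genuinely in $W^m_q$ with no distributional mass on the hyperplane, and correspondingly the integration-by-parts identities carry through without extra interface terms. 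One must also be careful that $\Omega^*\subset\bR^d_-$ is exactly the region where $\cE_m\psi$ is given by the reflection formula and that the map $y\mapsto(-ky_1,y')$ sends $\Omega^*$ into $B_R^+(\hat y_0)$ for each $k=1,\dots,m$ (so that the pulled-back integrands make sense as functions on $B_R^+$); this uses \eqref{eq17.29}, i.e., that $\Omega$ in the new coordinates lies within $O(\gamma R)$ of the half space, together with $k\le m$ and the geometry of $B_R(\hat y_0)$. The remaining bookkeeping — tracking signs, the powers of $k$, and the indicator functions $1_{(-ky_1,y')\in\Omega^*}$ — is routine once this geometric containment and the trace-matching are in hand.
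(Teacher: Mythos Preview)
Your approach is correct and essentially identical to the paper's: extend the test function $\psi$ via $\cE_m$, insert $\cE_m\psi$ into the weak formulation on $\Omega_R(\hat y_0)=B_R^+(\hat y_0)\cup\Omega^*$, change variables in the $\Omega^*$ integral, and add/subtract $\bar a_{\alpha\beta}$ on $B_R^+$. The obstacle you anticipate, however, is not one: no integration by parts is ever performed, the argument stays entirely in weak form, and after the substitution the $\Omega^*$ contributions already appear as $\int_{B_R^+(\hat y_0)} D^\alpha\psi\cdot(\cdots)\,dz$ (i.e., already the weak form of $D^\alpha(\cdots)$, with the indicator handling support), so there are no interface terms to cancel and \eqref{eq0514-02} is used exactly once---to ensure $\cE_m\psi\in W^m_q$.
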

\begin{proof}
Take a test function $\phi=(\phi^1,\phi^2,\ldots,\phi^n)\in W^m_2(B_R^+(\hat y_0))$ which vanishes near $\bR^d_+\cap \partial B_R(\hat y_0)$. Due to \eqref{eq17.29}, it is easily seen that $\cE_m \phi\in W^m_2(\Omega_R(\hat y_0))$ and vanishes near $\Omega\cap \partial B_R(\hat y_0)$. Since $\vu$ satisfies \eqref{eq11.13} with the conormal derivative condition on  $\partial\Omega\cap B_R(\hat y_0)$, we have
$$
\int_{\Omega_R(\hat y_0)}\sum_{|\alpha|=|\beta|=m}D^\alpha\cE_m\phi \cdot
a_{\alpha\beta}D^\beta\vu +\lambda \cE_m\phi\cdot \vu\,dy
$$
$$
=\sum_{|\alpha|\le m}\int_{\Omega_R(\hat y_0)}(-1)^{|\alpha|}D^\alpha\cE_m\phi\cdot
\vf_\alpha\, dy.
$$
From this identity and the definition of the extension operator $\cE_m$, a straightforward calculation gives
$$
\int_{B_R^+(\hat y_0)}\sum_{|\alpha|=|\beta|=m}D^\alpha\phi \cdot
\bar a_{\alpha\beta}D^\beta\vu +\lambda \phi\cdot \vu\,dy
$$
$$
=\sum_{|\alpha|=|\beta|=m}
\int_{B_R^+(\hat y_0)}D^\alpha\phi \cdot
(\bar a_{\alpha\beta}-a_{\alpha\beta})D^\beta\vu \, dy
+\sum_{|\alpha|\le m}\int_{B_R^+(\hat y_0)}(-1)^{|\alpha|}D^\alpha\phi\cdot
\tilde\vf_\alpha\,dy
$$
$$
=\sum_{|\alpha|=m}
\int_{B_R^+(\hat y_0)}D^\alpha\phi \cdot
(-1)^{|\alpha|}\vg_\alpha \,dy
-\lambda \int_{B_R^+(\hat y_0)}\phi\cdot\vh\,dy.
$$
The lemma is proved.
\end{proof}

Set
$$
G_\alpha = (-1)^m \sum_{|\beta|=m} \left(\bar a_{\alpha\beta} - a_{\alpha\beta}\right)D^\beta u + \tilde{\vf}_\alpha + \vg_\alpha
\quad
\text{for}
\quad
|\alpha|=m,
$$
$$
G_\alpha = \tilde{\vf}_\alpha
\quad
\text{for}
\quad
0\le |\alpha| < m.
$$
We see that $G_\alpha \in L_2(B_R^+(\hat{y}_0))$, and by \eqref{eq17.23b}
$$
(-1)^m \cL_0 \vu + \lambda \vu = \sum_{|\alpha| \le m} D^{\alpha} G_\alpha - \lambda \vh.
$$
Take $\varphi$ to be an infinitely differentiable function such that
$$
0 \le \varphi \le 1,
\quad
\varphi = 1 \,\, \text{on} \,\, B_{R/2}(\hat y_0),
\quad
\varphi = 0 \,\, \text{outside} \,\, B_R(\hat y_0).
$$
Then we find a unique solution $\vw \in W_2^m(\bR^d_+)$ satisfying
\begin{equation}
							\label{eq08_01}
(-1)^m \cL_0 \vw + \lambda \vw = \sum_{|\alpha| \le m} D^{\alpha} (\varphi G_\alpha) - \lambda \varphi\vh
\end{equation}
with the conormal derivative condition on $\partial \bR^d_+$.
By Theorem \ref{theorem08061901} we have
\begin{equation}
							\label{eq08_02}
\sum_{|\alpha|\le m}\lambda^{\frac12-\frac {|\alpha|} {2m}} \|D^\alpha \vw \|_{L_2(\bR^d_+)}
\le  N \sum_{|\alpha|\le m}\lambda^{\frac {|\alpha|} {2m}-\frac12} \|\varphi G_\alpha\|_{L_2(\bR^d_+)}
+ N \lambda \|\varphi \vh\|_{L_2(\bR^d_+)}.
\end{equation}
%
Now we set $\vv := \vu-\vw$ in $B_R(\hat y_0) \cap \overline{\bR^d_+}$.
Then $\vv$ satisfies
\begin{equation}
							\label{eq08_03}
(-1)^m \cL_0 \vv + \lambda \vv = 0
\end{equation}
in $B_{R/2}^+(\hat y_0)$ with the conormal derivative condition on $\Gamma_{R/2}(\hat y_0)$.
Since the coefficients of $\cL_0$ are infinitely differentiable, by the classical theory $\vv$ is infinitely differentiable in $B_{R/2}(\hat{y}_0) \cap \overline{\bR^d_+}$.

Recall that
$$
U=\sum_{|\alpha| \le m}\lambda^{\frac 1 2-\frac {|\alpha|}{2m}} |D^\alpha \vu|,
\quad
F=\sum_{|\alpha|\le m}\lambda^{\frac {|\alpha|} {2m}-\frac 1 2}|\vf_\alpha|.
$$

\begin{lemma}
We have
\begin{equation}
            \label{eq21.52h}
\sum_{k=0}^m\lambda^{\frac 1 2-\frac k {2m}}(|D^k  \vw|^2)_{B_R^+(\hat y_0)}^{1/2}
\le N\gamma^{1/ {\nu'}} (U^\nu)_{\Omega_R(\hat y_0)}^{1/ \nu}+
N(F^2)_{\Omega_R(\hat y_0)}^{1/2},
\end{equation}
where $\nu$ and $\nu'$ are from Proposition \ref{prop7.9}.
\end{lemma}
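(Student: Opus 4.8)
The plan is to apply the $L_2$-estimate \eqref{eq08_02} to $\vw$ and then bound each term on its right-hand side over $B_R(\hat y_0)$, the ball containing $\mathrm{supp}\,\varphi$. Since $(|D^k\vw|^2)_{B_R^+(\hat y_0)}^{1/2}\le N|B_R^+(\hat y_0)|^{-1/2}\|D^k\vw\|_{L_2(\bR^d_+)}$ and, by \eqref{eq17.29}, $|B_R^+(\hat y_0)|\approx|\Omega_R(\hat y_0)|\approx R^d$, the left-hand side of \eqref{eq21.52h} is controlled by $N|\Omega_R(\hat y_0)|^{-1/2}$ times the right-hand side of \eqref{eq08_02}. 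Recalling the definition of $G_\alpha$ and the decomposition furnished by Lemma \ref{lem4.2}, this reduces matters to estimating, over $B_R^+(\hat y_0)\subset\Omega_R(\hat y_0)$, four families of terms: (a) the coefficient-difference term $(\bar a_{\alpha\beta}-a_{\alpha\beta})D^\beta\vu$ (present only for $|\alpha|=m$, hence carrying weight $\lambda^0$); (b) the data term $\tilde\vf_\alpha$; (c) the reflection corrections $\vg_\alpha$; and (d) the zeroth-order term $\lambda\vh$.

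For (a) I would use H\"older's inequality on $B_R^+(\hat y_0)$ with the conjugate exponents $\nu/2$ and $\nu'/2$, bound $|\bar a_{\alpha\beta}-a_{\alpha\beta}|^{\nu'}\le(2\delta^{-1})^{\nu'-1}|\bar a_{\alpha\beta}-a_{\alpha\beta}|$, and invoke \eqref{eq17_50} to get $\int_{B_R^+(\hat y_0)}|\bar a_{\alpha\beta}-a_{\alpha\beta}|^{\nu'}\le N\gamma|B_R|$; since $|D^\beta\vu|\le U$ for $|\beta|=m$ and $1/\nu'=\tfrac12-\tfrac1\nu$, this produces the contribution $N\gamma^{1/\nu'}(U^\nu)_{\Omega_R(\hat y_0)}^{1/\nu}$. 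For (b) I would split $\tilde\vf_\alpha$ into $\vf_\alpha$, which gives $N(F^2)_{\Omega_R(\hat y_0)}^{1/2}$ directly from the definition of $F$, and the reflected pieces $\vf_\alpha(-ky_1,y')\,1_{(-ky_1,y')\in\Omega^*}$, for which the change of variables $z=(-ky_1,y')$ (Jacobian $1/k$), whose image lies in $\Omega^*\subset\Omega_R(\hat y_0)$, again yields $N(F^2)_{\Omega_R(\hat y_0)}^{1/2}$.

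The delicate point, and the main obstacle, is (c) and (d): a crude change of variables as in (b) would bound these only by $N(U^2)_{\Omega_R(\hat y_0)}^{1/2}$, which is far too large to be absorbed in the subsequent iteration. The key observation is that $\Omega^*=\bR^d_-\cap\Omega\cap B_R(\hat y_0)$ is \emph{thin}: by the transformed Reifenberg flatness \eqref{eq17.29} it lies in the slab $\{-2\gamma R<y_1\le0\}\cap B_R(\hat y_0)$, so $|\Omega^*|\le N\gamma|B_R|$. Hence, after the change of variables $z=(-ky_1,y')$, I would apply H\"older on $\Omega^*$ with exponents $\nu/2$ and $\nu'/2$ to extract the factor $|\Omega^*|^{1/2-1/\nu}\le N(\gamma|B_R|)^{1/\nu'}$, and then use $|a_{\alpha\beta}|\le\delta^{-1}$ together with $|D^\beta\vu|\le U$ for (c), and $\lambda^{1/2}|\vu|\le U$ for (d), to obtain the bound $N\gamma^{1/\nu'}(U^\nu)_{\Omega_R(\hat y_0)}^{1/\nu}$ in each case (here $\vu\in W^m_{\nu,\mathrm{loc}}$ from Proposition \ref{prop7.9} is what makes the $L_\nu$-norms meaningful). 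Adding the contributions from (a)--(d) gives \eqref{eq21.52h}, with $N$ depending only on $d,m,n,\delta,\nu$.
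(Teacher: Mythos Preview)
Your proposal is correct and follows essentially the same route as the paper: apply the $L_2$-estimate \eqref{eq08_02}, split the right-hand side into the four groups (a)--(d), handle (a) via H\"older and \eqref{eq17_50}, handle (b) by a change of variables giving $(F^2)_{\Omega_R(\hat y_0)}^{1/2}$, and exploit the thinness $|\Omega^*|\le N\gamma|B_R|$ of the reflected strip together with H\"older (exponents $\nu/2,\nu'/2$) to squeeze the factor $\gamma^{1/\nu'}$ out of (c) and (d). The only cosmetic difference is that the paper writes (c) and (d) as averages over $\Omega_R(\hat y_0)$ against the indicator $1_{\{-2\gamma R<y_1<0\}}$ before applying H\"older, whereas you first change variables and then apply H\"older on $\Omega^*$; the outcome is the same. (Your weight $\lambda^{1/2}$ on the $\vh$-term is the correct one coming from Theorem \ref{theorem08061901}.)
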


\begin{proof}
By \eqref{eq08_02} and the definition of $G_\alpha$, we have
\begin{multline*}
\sum_{|\alpha|\le m}\lambda^{\frac 1 2-\frac {|\alpha|} {2m}}\|D^{\alpha}  \vw\|_{L_2(\bR^d_+)}
\le N \sum_{|\alpha|=|\beta|=m}\|\varphi(\bar a_{\alpha\beta}- a_{\alpha\beta})D^\beta \vu\|_{L_2(\bR^d_+)}
\\
+N\sum_{|\alpha|\le m}\lambda^{\frac{|\alpha|}{2m}-\frac 1 2} \|\varphi\tilde \vf_\alpha\|_{L_2(\bR^d_+)}
+N \sum_{|\beta|=m}\|\varphi \vg_\alpha\|_{L_2(\bR^d_+)}
+N \lambda \|\varphi \vh\|_{L_2(\bR^d_+)}.
\end{multline*}

Note that $\Omega^*$ lies in the strip $B_R(\hat y_0)\cap \{y:  -2\gamma R<y_1<0\}$.
Thus, by the definitions of $\tilde \vf_\alpha$, $\vg_\alpha$, and $\vh$,
it follows that the left-hand side  of \eqref{eq21.52h} is less than a constant times
\begin{align*}
\sum_{|\alpha|=|\beta|=m}&\left(|(\bar a_{\alpha\beta}- a_{\alpha\beta})D^\beta \vu|^2\right)^{1/2}_{B_R^+(\hat y_0)}
+ \sum_{|\alpha|\le m}\lambda^{\frac{|\alpha|}{2m}-\frac 1 2} \left(|\vf_\alpha|^2\right)^{1/2}_{\Omega_R(\hat y_0)}\\
&+ \left(I_{\{-2\gamma R<y_1<0\}}|D^{m}\vu|^2\right)^{1/2}_{\Omega_R(\hat y_0)}+\lambda
\left(I_{\{-2\gamma R<y_1<0\}}|\vu|^2\right)^{1/2}_{\Omega_R(\hat y_0)}\\
&:=I_1+I_2+I_3+I_4.
\end{align*}
By using \eqref{eq17_50} and H\"older's inequality, we see that
$$
I_1\le N\gamma^{1/ {\nu'}} (U^\nu)_{\Omega_R(\hat y_0)}^{1/ \nu}.
$$
It is clear that $I_2$ is bounded by $N(F^2)_{\Omega_R(\hat y_0)}^{1/2}$.
Observe that by H\"older's inequality we have
\begin{align*}
\left(I_{\{-2\gamma R < y_1 < 0\}}
|D^{m}\vu|^2\right)^{1/2}_{\Omega_R(\hat y_0)}
&\le \left(I_{\{-2\gamma R < y_1 < 0\}}\right)_{\Omega_R(\hat y_0)}^{1/\nu'}
\left(|D^m\vu|^\nu\right)^{1/\nu}_{\Omega_R(\hat y_0)}
\\
&\le N\gamma^{1/\nu'}(|D^m\vu|^{\nu})_{\Omega_R(\hat y_0)}^{1/\nu}.
\end{align*}
Thus $I_3$ is also bounded by $N\gamma^{1/ {\nu'}} (U^\nu)_{\Omega_R(\hat y_0)}^{1/ \nu}$. In a similar way, $I_4$ is bounded by $N\gamma^{1/ {\nu'}} (U^\nu)_{\Omega_R(\hat y_0)}^{1/ \nu}$.
Therefore, we conclude \eqref{eq21.52h}.
\end{proof}

Now we are ready to complete the proof of Proposition \ref{prop7.9}.
We shall show that $U^B:=U'+|\Theta|$ satisfies the inequalities in the proposition.
First, we consider the case when $\kappa \gamma \le 1/10$.
By \eqref{eq21.52h}, it follows that
\begin{equation}
            \label{eq18.34h}
(W^2)_{B_{R}^+(\hat y_0)}^{1/2}
\le N\gamma^{1/ {\nu'}} (U^\nu)_{\Omega_{R}(\hat y_0)}^{1/ \nu}+
N(F^2)_{\Omega_{R}(\hat y_0)}^{1/2}.
\end{equation}
Noting that $B_r^+(y_0) \subset B_R^+(\hat y_0)$ and, since $\kappa \gamma\le 1/10$, $|B_R^+(\hat y_0)|/|B_r^+(y_0)|
\le N(d) \kappa^{d}$, we obtain from \eqref{eq18.34h}
\begin{equation}
                                \label{eq28_01}
(W^2)_{B_r^+(y_0)}^{1/2}
\le N\kappa^{\frac d 2} \left(\gamma^{1/ {\nu'}} (U^\nu)_{\Omega_{R}(\hat y_0)}^{1/ \nu}+
(F^2)_{\Omega_{R}(\hat y_0)}^{1/2}\right).
\end{equation}
Next we denote
$$
\cD_1=\Omega_{r}(y_0)\cap \{y_1<0\}=\Omega^*,
\quad
\cD_2=B_{r}^+(y_0),
\quad
\cD_3=B_{R/4}^+(\hat y_0).
$$
Since $\vv$ in \eqref{eq08_03} is infinitely differentiable,  by applying Lemma \ref{lem6.7} to the system \eqref{eq08_03} with a scaling argument, we compute
\begin{multline}
                                \label{eq23.10}
\big(|V'-(V')_{\cD_2}|\big)_{\cD_2}
+\big(|\hat \Theta-(\hat \Theta)_{\cD_2}|\big)_{\cD_2}
\le N r^{1/2}\big([V']_{C^{1/2}(\cD_2)}
+[\hat \Theta]_{C^{1/2}( \cD_2)}\big)\\
\le N r^{1/2}\big([V']_{C^{1/2}(\cD_3)}
+[\hat \Theta]_{C^{1/2}( \cD_3)}\big)
\le N\kappa^{-1/2}(V^2)_{B^+_{R/2}(\hat y_0)}^{1/2}.
\end{multline}
Thanks to the fact that $\kappa\gamma\le 1/10$, we have
\begin{equation}
                                \label{eq21.31}
|\cD_1|\le N\kappa\gamma |\cD_2|,\quad
|\Omega_R(\hat y_0)|\le N\kappa^d |\cD_2|.
\end{equation}
By combining \eqref{eq18.34h}, \eqref{eq28_01}, and \eqref{eq23.10}, we get
\begin{align*}
\big(|&U^{B}-(U^B)_{\cD_2}|\big)_{\cD_2}\nonumber\\
&\le N\big(|V'-(V')_{\cD_2}|\big)_{\cD_2}
+ N \big(|\hat \Theta-(\hat \Theta)_{\cD_2}|\big)_{{\cD_2}}
+ N \big(W\big)_{\cD_2}\nonumber\\
&\le  N\kappa^{-1/2}(U^2)_{B^+_{R/2}(\hat y_0)}^{1/2}
+N\kappa^{\frac d 2} \left(\gamma^{1/ {\nu'}} (U^\nu)_{\Omega_{R}(\hat y_0)}^{1/ \nu}+
(F^2)_{\Omega_{R}(\hat y_0)}^{1/2}\right).
\end{align*}
By using the triangle inequality and the assumption $\kappa \gamma\le 1/10$,
$$
\big(|U^B-(U^B)_{\Omega_r(y_0)}|\big)_{\Omega_r(y_0)}
\le N\big(|U^B-(U^B)_{\cD_2}|\big)_{\cD_2}
$$
$$
+N\kappa\gamma
(U^B)_{\cD_2}
+N(1_{\cD_1} U^B )_{\Omega_r(y_0)}.
$$
We use \eqref{eq12.01}, \eqref{eq21.31}, and H\"older's inequality to bound the last two terms on the right-hand side above as follows:
$$
\kappa \gamma (U^B)_{\cD_2}\le N\kappa \gamma (|U|^2)^{1/2}_{\cD_2}
\le N\kappa \gamma \kappa^{d/2}(|U|^2)^{1/2}_{\Omega_R(\hat y_0)},
$$
\begin{equation*}
(1_{\cD_1} U^B )_{\Omega_r(y_0)}\le (1_{\cD_1})^{1/2}_{\Omega_r(y_0)}(|U|^2)^{1/2}_{\Omega_r(y_0)}
\le N(\kappa \gamma)^{1/2} \kappa^{d/2}(|U|^2)_{\Omega_R(\hat y_0)}^{1/2}.
\end{equation*}
Therefore,
\begin{multline}
                                \label{eq21.12}
\big(|U^B-(U^B)_{\Omega_r(y_0)}|\big)_{\Omega_r(y_0)}
\le  N\big(\kappa^{-1/2}+(\kappa \gamma)^{1/2} \kappa^{d/2}\big) (|U|^2)_{\Omega_R(\hat y_0)}^{1/2}\\
+N\kappa^{\frac d 2} \left(\gamma^{1/ {\nu'}} (U^\nu)_{\Omega_{R}(\hat y_0)}^{1/ \nu}+(F^2)_{\Omega_{R}(\hat y_0)}^{1/2}\right).
\end{multline}

In the remaining case when $\kappa\gamma>1/10$, by \eqref{eq12.01} and \eqref{eq22.16},
\begin{align*}
\big(|U^B-(U^B)_{\Omega_r(y_0)}|\big)_{\Omega_r(y_0)}&\le N(U^B)_{\Omega_r(y_0)}\le N(U)_{\Omega_r(y_0)}\\
&\le N(|U|^2)_{\Omega_r(y_0)}^{1/2}
\le N\kappa^{d/2}(|U|^2)_{\Omega_R(\hat y_0)}^{1/2},
\end{align*}
where in the last inequality, we used the obvious inequality $|\Omega_R(\hat y_0)|\le N\kappa^d|\Omega_r(y_0)|$.
Therefore, in this case, \eqref{eq21.12} still holds.
Finally, we transform the obtained inequality back to the original coordinates to get the inequality \eqref{eq5.42}.
This completes the proof of Proposition \ref{prop7.9}.

\subsection{Proof of Theorem \ref{thm3}}
We finish the proof of Theorem \ref{thm3} in this subsection.
First we observe that by taking a sufficiently large $\lambda_0$ and using interpolation inequalities, we can move all the lower-order terms of $\cL \vu$ to the right-hand side.
Thus in the sequel we assume that all the lower-order coefficients of $\cL$ are zero.

Recall the definition of $\bC_l,l\in \bZ$ above Theorem \ref{th081201}.
Notice that if $x\in C\in \bC_l$, then for the smallest $r>0$
such that $C\subset B_r(x)$ we have
$$
\dashint_{C} \dashint_{C}|g(y)-g(z)|
\,dy\,dz\leq N(d)
\dashint_{B^+_r(x)} \dashint_{B^+_r(x)}|g(y)-g(z)|
\,dy\,dz.
$$
We use this inequality in the proof of the following corollary.

\begin{corollary}                           \label{cor001b}
Let $\gamma \in (0,1/4)$, $\lambda > 0$, $\nu\in (2,\infty)$, $\nu'=2\nu/(\nu-2)$, and
$z_0 \in \overline{\bR^d_+}$.
Assume that $\vu\in W_{\nu,\text{loc}}^m(\overline{\bR^d_+})$ vanishes outside $B_{\gamma R_0}(z_0)$ and
satisfies
$$
(-1)^m\cL \vu+\lambda \vu=\sum_{|\alpha|\le m}D^\alpha \vf_\alpha
$$
locally in $\bR^d_+$ with the conormal derivative condition on $\partial \bR^d_+$,
where $\vf_\alpha\in L_{2,\text{loc}}(\overline{\bR^d_+})$.
Then under Assumption \ref{assumption20100901} ($\gamma$),
for each $l \in \bZ$, $C \in \bC_l$, and $\kappa \ge 64$,
there exists a function $U^C$ depending on $C$
such that $N^{-1}U\le U^C\le NU$ and
\begin{equation}
							\label{eq08_04}
\left(|U^C-(U^C)_{C}|\right)_{C}
\le N \left(F_{\kappa}\right)_C,
\end{equation}
where
$N=N(d,\delta,m,n,\tau)$ and
\begin{align*}
F_{\kappa}&=
\big(\kappa^{-1/2}+(\kappa \gamma)^{1/2}\kappa^{d/2}\big)\big(\cM (1_{\bR^d_+}U^2)\big)^{1/2}\\
&\quad +\kappa^{d/2}\big[\big(\cM(1_{\bR^d_+}F^2)\big)^{1/2}+\gamma^{1/\nu'}
(\cM(1_{\bR^d_+}U^{\nu}))^{1/\nu}\big].
\end{align*}
\end{corollary}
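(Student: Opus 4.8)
The plan is to deduce Corollary \ref{cor001b} from Proposition \ref{prop7.9} by the standard ``dyadic cube versus ball'' comparison. First I would fix $l\in\bZ$ and $C=C_l(i_1,\dots,i_d)\in\bC_l$, and let $r>0$ be the smallest radius such that $C\subset B_r(x)$ for $x\in C$; note $r\simeq 2^{-l}$, and by the remark preceding the corollary,
$$
\dashint_C\dashint_C|g(y)-g(z)|\,dy\,dz\le N(d)\dashint_{B_r^+(x)}\dashint_{B_r^+(x)}|g(y)-g(z)|\,dy\,dz ,
$$
which bounds $(|U^C-(U^C)_C|)_C$ by a constant times the mean oscillation of $U^C$ over $B_r^+(x)$ (after choosing $U^C$ to be the restriction to $C$ of the function $U^B$ produced by the proposition, with $B=B^+_{\kappa r}(x)$). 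The hypothesis $\vu$ vanishes outside $B_{\gamma R_0}(z_0)$ is what lets us run the argument: since $\gamma<1/4$ and $\kappa\ge 64$, either $\kappa r\le R_0$, in which case Proposition \ref{prop7.9} applies directly on $B^+_{\kappa r}(x)$, or $\kappa r> R_0$, in which case $B^+_{\kappa r}(x)$ already engulfs the support of $\vu$ and one argues by a cruder bound, using that the support has diameter $\le 2\gamma R_0$. In either case the conclusion should be packaged uniformly.

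Concretely, in the first subcase I would apply Proposition \ref{prop7.9} with this $\kappa$ and $r$, obtaining $U^B$ on $B^+:=B^+_{\kappa r}(x)$ with $N^{-1}U\le U^B\le NU$ and the bound \eqref{eq5.42}. Setting $U^C:=U^B|_C$ gives $N^{-1}U\le U^C\le NU$ on $C$, and the displayed comparison inequality yields
$$
(|U^C-(U^C)_C|)_C\le N\,(|U^B-(U^B)_{B_r^+(x)}|)_{B_r^+(x)}
$$
up to the elementary fact that $(|w-(w)_D|)_D\le 2(|w-(w)_{D'}|)_{D'}\cdot|D'|/|D|$ is replaced by the symmetric double-integral form above, which avoids the ratio. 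Then \eqref{eq5.42} gives the right-hand side as
$$
N(\kappa^{-1/2}+(\kappa\gamma)^{1/2}\kappa^{d/2})(U^2)_{B^+}^{1/2}
+N\kappa^{d/2}\big[(F^2)_{B^+}^{1/2}+\gamma^{1/\nu'}(U^\nu)_{B^+}^{1/\nu}\big].
$$
Finally I would dominate each averaged quantity over $B^+=B^+_{\kappa r}(x)$ by the corresponding maximal function evaluated at any point of $C$: since $x\in C\subset B^+$, we have $(1_{\bR^d_+}U^2)_{B^+}\le \cM(1_{\bR^d_+}U^2)(x)$, and likewise for $F^2$ and for $U^\nu$ (the last using $\cM$ applied to $(1_{\bR^d_+}U^\nu)^{?}$ — one takes the maximal function of $1_{\bR^d_+}U^{\nu}$ and then the $1/\nu$ power). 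Assembling these bounds reproduces exactly $F_\kappa$ in the statement, proving \eqref{eq08_04}.

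For the second subcase $\kappa r>R_0$, I would note that $B^+_{\kappa r}(x)\supset \mathrm{supp}\,\vu$ because the support has diameter at most $2\gamma R_0<\kappa r$ once $x$ is within $O(2^{-l})$ of it (if $x$ is far from the support then $U^C\equiv 0$ on $C$ and there is nothing to prove). On such a cube one simply takes $U^C:=U|_C$ (or $U':=U'+|\Theta|$ as before) and estimates $(|U^C-(U^C)_C|)_C\le N(U^C)_C\le N(U^2)_C^{1/2}$. Using $C\subset B_r(x)$ and $r> R_0/\kappa$ together with the fact that $\vu$ is supported in a ball of radius $\gamma R_0\le \gamma\kappa r$, a volume comparison $|B_r(x)|\le N\kappa^{d}|C|$ and the support condition convert this into a multiple of $\kappa^{d/2}(\cM(1_{\bR^d_+}U^2)(x))^{1/2}$, which is absorbed into $F_\kappa$. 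I expect the main obstacle to be purely bookkeeping: making the case split on $\kappa r$ versus $R_0$ clean, and in the first subcase correctly matching the $\Omega_R(\hat y_0)$-averages that appear inside \eqref{eq5.42} (after transforming back to the original coordinates, these become averages over $B^+_{\kappa r}(x_0)$) to the Hardy--Littlewood maximal function, in particular getting the exponent on $\gamma$ and the powers of $\kappa$ to come out exactly as in $F_\kappa$. No genuinely new idea is needed beyond Proposition \ref{prop7.9} and the maximal function theorem.
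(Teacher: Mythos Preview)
Your treatment of the case $\kappa r\le R_0$ is correct and is exactly the paper's argument: apply Proposition \ref{prop7.9}, set $U^C=U^B$, pass from the cube to the containing half-ball via the displayed double-average inequality, and dominate the averages over $B^+_{\kappa r}(x_0)$ by maximal functions evaluated at points of $C$.

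The gap is in the case $\kappa r>R_0$. Your crude estimate
\[
(|U^C-(U^C)_C|)_C\le N(U^2)_C^{1/2}
\]
followed by a volume comparison yields at best a multiple of $\big(\cM(1_{\bR^d_+}U^2)\big)^{1/2}$ (your inequality $|B_r(x)|\le N\kappa^d|C|$ is spurious, since $|B_r|$ and $|C|$ are already comparable). This term is \emph{not} absorbed by $F_\kappa$: the coefficient of $\big(\cM(1_{\bR^d_+}U^2)\big)^{1/2}$ in $F_\kappa$ is $\kappa^{-1/2}+(\kappa\gamma)^{1/2}\kappa^{d/2}$, which in the proof of Theorem \ref{theorem001b} is made small by first choosing $\kappa$ large and then $\gamma$ small. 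A term of order one times $\big(\cM U^2\big)^{1/2}$ would survive on the right-hand side and the absorption in Theorem \ref{theorem001b} would fail.

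The missing step is to use the small support via H\"older's inequality rather than a pure $L_2$ bound. With $U^C=U$ the paper computes
\begin{align*}
(U^2)_{B_r^+(x_0)}^{1/2}
&\le N\kappa^{d/2}\big(1_{B^+_{\gamma R_0}(z_0)}U^2\big)_{B_{\kappa r}(x_0)}^{1/2}\\
&\le N\kappa^{d/2}\big(1_{B_{\gamma R_0}(z_0)}\big)_{B_{\kappa r}(x_0)}^{1/\nu'}\big(1_{\bR^d_+}U^\nu\big)_{B_{\kappa r}(x_0)}^{1/\nu}
\le N\kappa^{d/2}\gamma^{1/\nu'}\big(\cM(1_{\bR^d_+}U^\nu)\big)^{1/\nu},
\end{align*}
where the last inequality uses $\kappa r>R_0$ (so $|B_{\gamma R_0}|/|B_{\kappa r}|\le\gamma^d\le\gamma$). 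This lands precisely on the third term of $F_\kappa$, carrying the small factor $\gamma^{1/\nu'}$ that your argument omits. Once you insert this H\"older step, your proof is complete and agrees with the paper's.
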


\begin{proof}
For each $\kappa \ge 64$ and $C \in \bC_l$, let $B_r(x_0)$ be the smallest ball containing $C$.
Clearly, $x_0 \in \overline{\bR^d_+}$.

If $\kappa r > R_0$, then we take $U^C = U$.
Note that the volumes of $C$, $B_r(x_0)$, and $B^+_r(x_0)$ are comparable, and $C\subset B_r^+(x_0)$.
Then by the triangle inequality and H\"{o}lder's inequality, the left-hand side of \eqref{eq08_04} is less than
\begin{multline*}
N \left(U^2\right)^{1/2}_{B_r^+(x_0)}
\le N \kappa^{d/2} \big( 1_{B^+_{\gamma R_0}(z_0)}U^2 \big)^{1/2}_{B_{\kappa r}(x_0)}\\
\le N \kappa^{d/2} \big( 1_{B_{\gamma R_0(z_0)}} \big)_{B_{\kappa r}(x_0)}^{1/\nu'} \big( 1_{\bR^d_+ }U^\nu \big)^{1/\nu}_{B_{\kappa r}(x_0)}
\le N \kappa^{d/2} \gamma^{1/\nu'} \big( 1_{\bR^d_+ }U^\nu \big)^{1/\nu}_{B_{\kappa r}(x_0)}.
\end{multline*}
Here the first inequality is because $|B_{\kappa r}(x_0)|\le 2\kappa^d |B_r^+(x_0)|$ and $U$ vanishes outside $B_{\gamma R_0}(z_0)$. The second inequality follows from H\"older's inequality. In the last inequality we used $\kappa r>R_0$ and $\gamma^d \le \gamma$.
Note that
\begin{equation}
							\label{eq08_05}
\big( 1_{\bR^d_+ }U^\nu \big)^{1/\nu}_{B_{\kappa r}(x_0)}
\le
\cM^{1/\nu} \big(1_{\bR^d_+ }U^\nu \big)(x)
\end{equation}
for all $x \in C$.
Hence the inequality \eqref{eq08_04} follows.

If $\kappa r \le R_0$, from Proposition \ref{prop7.9}, we find $U^B$ with $B^+ = B^+_{\kappa r}(x_0)$.
Take $U^C = U^B$. Then by Proposition \ref{prop7.9} we have
\begin{equation}
							\label{eq08_06}
\left( |U^C - \left(U^C\right)_C| \right)_C \le N(d) I,
\end{equation}
where $I$ is the right-hand side of the inequality \eqref{eq5.42}. Note that, for example,
$$
\left( U^2 \right)_{B_{\kappa r}^+(x_0)}
\le N(d) \big( 1_{\bR^d_+}U^2 \big)_{B_{\kappa r}(x_0)}.
$$
Using this and inequalities like \eqref{eq08_05}, we see that \eqref{eq08_06} implies the desired inequality \eqref{eq08_04}.
\end{proof}

\begin{theorem}                         \label{theorem001b}
Let $p \in (2,\infty)$, $\lambda > 0$, $z_0\in \overline{\bR^d_+}$,
and $\vf_{\alpha} \in L_p(\bR^d_+)$.
There exist positive constants $\gamma\in (0,1/4)$ and $N$,
depending only on $d$, $\delta$, $m$, $n$, $p$, such that under Assumption \ref{assumption20100901} ($\gamma$),
for $\vu \in W_p^m(\bR^d_+)$ vanishing outside $B_{\gamma R_0}(z_0)$ and satisfying
$$
(-1)^m\cL \vu+\lambda \vu=\sum_{|\alpha|\le m}D^\alpha \vf_\alpha
$$
in $\bR^d_+$ with the conormal derivative condition on $\partial \bR^d_+$,
we have
$$
\|U\|_{L_p(\bR^d_+)}
\le N \| F\|_{L_p(\bR^d_+)},
$$
where $N = N(d,\delta,m,n,p)$.
\end{theorem}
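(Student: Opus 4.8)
\textbf{Proof proposal for Theorem \ref{theorem001b}.}
The plan is to deduce the $L_p$ bound from the mean oscillation estimate of Corollary \ref{cor001b} via the generalized Fefferman--Stein theorem (Theorem \ref{th081201}), using a bootstrap in the exponent to avoid circularity. First I would fix $\nu = p/2 \cdot \text{(something)} $; more precisely, pick $\nu \in (2,p)$, so that $\nu < p$ and hence the maximal function operators $\cM(1_{\bR^d_+}U^\nu)^{1/\nu}$ and $\cM(1_{\bR^d_+}F^2)^{1/2}$ are bounded on $L_p(\bR^d_+)$ by the Hardy--Littlewood maximal function theorem (applied with exponents $p/\nu > 1$ and $p/2 > 1$ respectively), while $\cM(1_{\bR^d_+}U^2)^{1/2}$ is bounded on $L_p$ as well. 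The a priori information $\vu \in W_p^m$ with compact support guarantees $U \in L_p(\bR^d_+)$, so all these quantities are finite and in $L_p$.

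Next, apply Theorem \ref{th081201} with the triple $(U, V, F) = (U, NU, N F_\kappa)$: Corollary \ref{cor001b} supplies, for every $l \in \bZ$ and $C \in \bC_l$, a function $U^C$ with $N^{-1} U \le U^C \le N U$ (so $|U| \le U^C \le V$ after adjusting constants) satisfying $\int_C |U^C - (U^C)_C| \, dx \le \int_C N F_\kappa \, dx$. The theorem then yields
$$
\|U\|_{L_p(\bR^d_+)}^p \le N \|F_\kappa\|_{L_p(\bR^d_+)} \|U\|_{L_p(\bR^d_+)}^{p-1},
$$
and dividing by $\|U\|_{L_p}^{p-1}$ (finite and, we may assume, nonzero) gives $\|U\|_{L_p} \le N \|F_\kappa\|_{L_p}$. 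Now expand $F_\kappa$ and use the maximal function theorem termwise:
$$
\|U\|_{L_p(\bR^d_+)} \le N\big(\kappa^{-1/2} + (\kappa\gamma)^{1/2}\kappa^{d/2}\big)\|U\|_{L_p(\bR^d_+)} + N\kappa^{d/2}\big[\|F\|_{L_p(\bR^d_+)} + \gamma^{1/\nu'}\|U\|_{L_p(\bR^d_+)}\big],
$$
where $N$ now also depends on $p$ and $\nu$ (hence ultimately only on $d,\delta,m,n,p$ once $\nu$ is fixed in terms of $p$).

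The final step is the absorption argument: choose $\kappa$ large enough that $N\kappa^{-1/2} \le 1/4$, then, with $\kappa$ now fixed, choose $\gamma$ small enough that $N(\kappa\gamma)^{1/2}\kappa^{d/2} \le 1/4$ and $N\kappa^{d/2}\gamma^{1/\nu'} \le 1/4$. This forces $\gamma = \gamma(d,\delta,m,n,p) \in (0,1/4)$ as required, and the coefficient of $\|U\|_{L_p}$ on the right is at most $3/4$, so it can be absorbed into the left-hand side, leaving $\|U\|_{L_p(\bR^d_+)} \le N \|F\|_{L_p(\bR^d_+)}$ with $N = N(d,\delta,m,n,p)$. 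The main obstacle is not any single calculation but ensuring the logical ordering is sound: one must fix $\nu < p$ before invoking the maximal function theorem, verify that $\|U\|_{L_p}$ is genuinely finite (so that division and absorption are legitimate), and pick $\kappa$ before $\gamma$ so that the smallness of $\gamma$ can compensate for the $\kappa^{d/2}$ factors; the interplay of these parameters is exactly where the argument could go wrong if handled carelessly.
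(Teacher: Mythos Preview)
Your proposal is correct and follows essentially the same route as the paper: fix $\nu\in(2,p)$, feed Corollary \ref{cor001b} into the generalized Fefferman--Stein theorem (Theorem \ref{th081201}) to obtain $\|U\|_{L_p}\le N\|F_\kappa\|_{L_p}$, estimate $\|F_\kappa\|_{L_p}$ via the Hardy--Littlewood maximal theorem, and then absorb by choosing $\kappa$ large first and $\gamma$ small afterward. The only cosmetic difference is that the paper collects the smallness condition into a single inequality with threshold $1/2$ rather than your three $1/4$ pieces.
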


\begin{proof}
Let $\gamma > 0$ and $\kappa \ge 64$ be constants to be specified below.
Take a  constant $\nu$ such that $p > \nu > 2$.
Then we see that $\vu \in W_{\nu,\text{loc}}^m(\overline{\bR^d_+})$ and all the conditions in Corollary \ref{cor001b} are satisfied.

For each $l \in \bZ$ and $C \in \bC_l$, let $U^C$ be the function from Corollary \ref{cor001b}.
Then by Corollary \ref{cor001b} and Theorem \ref{th081201}
we have
$$
\| U \|_{L_p(\bR^d_+)}^p \le N \|F_\kappa\|_{L_p(\bR^d_+)} \|U\|_{L_p(\bR^d_+)}^{p-1}.
$$
The implies that
$$
\|U \|_{L_p(\bR^d_+)} \le N \|F_\kappa\|_{L_p(\bR^d_+)}.
$$
Now we observe that by the Hardy--Littlewood maximal function theorem
\begin{multline*}
\|F_\kappa\|_{L_p(\bR^d_+)}
\le \|F_\kappa\|_{L_p(\bR^d)}
\le N \big(\kappa^{-1/2} + (\kappa \gamma)^{1/2} \kappa^{d/2}\big)\|1_{\bR^d_+} U\|_{L_p(\bR^d)}\\
+ N \kappa^{d/2} \|1_{\bR^d_+} F\|_{L_p(\bR^d)} + N \kappa^{d/2}\gamma^{1/\nu'} \|1_{\bR^d_+} U\|_{L_p(\bR^d)}.
\end{multline*}
To complete the proof, it remains to choose a sufficiently large $\kappa$, and then a sufficiently small $\gamma$
so that
$$
N \big(\kappa^{-1/2} + (\kappa \gamma)^{1/2} \kappa^{d/2}\big)
+ N \kappa^{d/2}\gamma^{1/\nu'} < 1/2.
$$
\end{proof}

\begin{proof}[Proof of Theorem \ref{thm3}]
We treat the following three cases separately.

{\em Case 1: $p=2$.} In this case, the theorem follows from Theorem \ref{theorem08061901}.

{\em Case 2: $p\in (2,\infty)$.} Assertion (i) follows from Theorem \ref{theorem001b} and the standard partition of unity argument. Then Assertion (ii) is derived from Assertion (i) by using the method of continuity. Finally, Assertion (iii) is due to a standard scaling argument.

{\em Case 3: $p\in (1,2)$.} In this case, Assertion (i) is a consequence of the duality argument and the $W^m_q$-solvability obtained above for $q=p/(p-1)\in (2,\infty)$. With the a priori estimate, the remaining part of the theorem is proved in the same way as in Case 2. The theorem is proved.
\end{proof}

\section{Systems on a Reifenberg flat domain}
                            \label{Reifenberg}

In this section, we consider elliptic systems on a Reifenberg flat domain. The crucial ingredients of the proofs below are the interior and the boundary estimates established in Sections \ref{sec_aux}, a result in \cite{Sa80,KS80} on the ``crawling of ink drops'', and an idea in \cite{CaPe98}.

By a scaling, in the sequel we may assume $R_0=1$ in Assumption \ref{assump1}. Recall the definitions of $U$ and $F$ in Sections \ref{sec_aux} and \ref{sec4}.

\begin{lemma}
                            \label{lem7.3}
Let $\gamma \in (0,1/50)$, $R\in (0,1]$, $\lambda\in (0,\infty)$, $\nu\in (2,\infty)$, $\nu'=2\nu/(\nu-2)$, $\vf_\alpha= (f_\alpha^1, \ldots, f_\alpha^n)^{\text{tr}} \in L_{2,\text{loc}}(\overline{\Omega})$, $|\alpha|\le m$. Assume that $a_{\alpha\beta}\equiv 0$ for any $\alpha,\beta$ satisfying $|\alpha|+|\beta|<2m$ and that $\vu\in W_{\nu, \text{loc}}^m(\overline{\Omega})$ satisfies \eqref{eq11_01}
locally in $\Omega$ with the conormal derivative condition on $\partial \Omega$.
Then the following hold true.

\noindent(i) Suppose $0\in \Omega$, $\dist(0,\partial\Omega)\ge R$, and Assumption \ref{assump1} ($\gamma$) (i) holds at the origin. Then there exists nonnegative functions $V$ and $W$ in $B_R$ such that $U \le V+W$ in $B_R$, and $V$ and $W$ satisfy
\begin{equation*}
(W^2)_{B_R}^{1/2}
\le N\gamma^{1/\nu'} (U^\nu)_{B_R}^{1/\nu}+
N(F^2)_{B_R}^{1/2}
\end{equation*}
and
\begin{equation*}
\|V\|_{L_\infty(B_{R/4})}
\le N\gamma^{1/\nu'} (U^\nu)_{B_{R}}^{1/ \nu}+
N(F^2)_{B_{R}}^{1/2}+
N(U^2)_{B_{R}}^{1/2},
\end{equation*}
where $N=N(d,n,m,\delta,\nu)>0$ is a constant.

\noindent(ii) Suppose $0\in \partial\Omega$ and Assumption \ref{assump1} ($\gamma$) (ii) holds at the origin. Then there exists nonnegative functions $V$ and $W$ in $\Omega_R$ such that $U \le V+W$ in $\Omega_R$, and $W$ and $V$ satisfy
\begin{equation}
            \label{eq17.10}
(W^2)_{\Omega_R}^{1/2}
\le N\gamma^{1 /\nu'} (U^\nu)_{\Omega_R}^{1/\nu}+
N(F^2)_{\Omega_R}^{1/2}
\end{equation}
and
\begin{equation} \label{eq17.11}
\|V\|_{L_\infty(\Omega_{R/4})} \le N\gamma^{1/\nu'}
(U^\nu)_{\Omega_R}^{1/\nu}
+N(F^2)_{\Omega_R}^{1/2}+ N(U^2)_{\Omega_R}^{1/2},
\end{equation}
where $N=N(d,n,m,\delta,\nu)>0$ is a constant.
\end{lemma}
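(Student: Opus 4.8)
The strategy mirrors the proof of Proposition \ref{prop7.9}, with the key algebraic input supplied by Lemma \ref{lem4.2}. I will prove (ii) in detail since (i) is the interior case, which is simpler and essentially contained in \cite{DK10}. So assume $0\in\partial\Omega$ and that Assumption \ref{assump1} ($\gamma$)(ii) holds at the origin, in the coordinate system for which \eqref{eq13.07} and \eqref{eq3.49} hold with $x=0$. After passing to that coordinate system, the portion of $\partial\Omega$ inside $B_R$ is trapped in the strip $\{|y_1|<\gamma R\}$, and the leading coefficients satisfy a partial-BMO bound in the measurable direction $y_1$. As in the half-space case, I replace $a_{\alpha\beta}$ by its average $\bar a_{\alpha\beta}(y_1):=\dashint_{B_R'}a_{\alpha\beta}(y_1,z')\,dz'$, which lies in $\bA$ (after a routine check of ellipticity for the averaged coefficients) and satisfies an $L_1$-oscillation bound of size $\gamma|B_R|$ by \eqref{eq13.07}.

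\textbf{Key steps.} First, apply Lemma \ref{lem4.2} (with $\hat y_0$ replaced by the origin and $R$ here in place of $R$ there): the function $\vu$ then satisfies, in $B_R^+$ with the conormal condition on $\Gamma_R$, an equation of the form $(-1)^m\cL_0\vu+\lambda\vu=\sum_{|\alpha|\le m}D^\alpha G_\alpha-\lambda\vh$, where $\cL_0$ has the smooth $y_1$-dependent coefficients $\bar a_{\alpha\beta}$, and $G_\alpha$, $\vh$ are supported in the thin strip $\{-2\gamma R<y_1<0\}$ plus the original data, exactly as in the text following Lemma \ref{lem4.2}. Here I use the Reifenberg flatness \eqref{eq3.49} to guarantee that $\cE_m\phi\in W^m_2(\Omega_R)$ and vanishes near $\Omega\cap\partial B_R$, which is what makes the extension argument run. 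Second, introduce the cutoff $\varphi$ (equal to $1$ on $B_{R/2}$, supported in $B_R$) and let $\vw\in W_2^m(\bR^d_+)$ solve $(-1)^m\cL_0\vw+\lambda\vw=\sum_{|\alpha|\le m}D^\alpha(\varphi G_\alpha)-\lambda\varphi\vh$ with the conormal condition on $\partial\bR^d_+$; Theorem \ref{theorem08061901} gives the $L_2$-bound on $\vw$, and since the right-hand side is concentrated in the strip, the estimate for $\sum_k\lambda^{1/2-k/2m}\|D^k\vw\|_{L_2}$ is controlled by $N\gamma^{1/\nu'}(U^\nu)_{\Omega_R}^{1/\nu}+N(F^2)_{\Omega_R}^{1/2}$, arguing term-by-term as in the proof of \eqref{eq21.52h} using H\"older's inequality on the strip of relative measure $\lesssim\gamma$. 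Setting $W:=\sum_k\lambda^{1/2-k/2m}|D^k\vw|$ (extended by a comparable quantity to all of $\Omega_R$) gives \eqref{eq17.10} after using \eqref{eq12.01}. Third, set $\vv:=\vu-\vw$ on $B_R^+\cap\overline{\bR^d_+}$; then $\vv$ solves the homogeneous system $(-1)^m\cL_0\vv+\lambda\vv=0$ in $B_{R/2}^+$ with the conormal condition on $\Gamma_{R/2}$, hence is smooth there by the classical theory with smooth $\bar a_{\alpha\beta}(y_1)$. Apply Lemma \ref{lem6.7} (with the Agmon-type lifting already built into it) with scaling to get the $L_\infty$ bound on $B_{R/4}^+$ of the corresponding quantity $V':=\sum_{|\alpha|\le m,\,\alpha_1<m}\lambda^{1/2-|\alpha|/2m}|D^\alpha\vv|$ together with $|\hat\Theta|$, in terms of $(V^2)_{B_{R/2}^+}^{1/2}\le N(U^2)^{1/2}+N(W^2)^{1/2}$. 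Finally, set $V:=V'+|\hat\Theta|$ on $B_{R/4}^+$; since $\Omega_{R/4}\setminus B_{R/4}^+$ sits in the thin strip, I extend $V$ there by $U$ itself (paying a factor that is absorbed into the $\gamma^{1/\nu'}$-term via H\"older as in the estimate of $(1_{\cD_1}U^B)_{\Omega_r}$ in the proof of Proposition \ref{prop7.9}), and combine with \eqref{eq12.01} to conclude $U\le V+W$ in $\Omega_R$ and \eqref{eq17.11}. Part (i) follows the same scheme without the extension operator, using Lemma \ref{cor3.5} in place of Lemma \ref{lem6.7} and \eqref{eq13.07} in place of both \eqref{eq13.07} and \eqref{eq3.49}.

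\textbf{Main obstacle.} The delicate point is the same one flagged in the introduction: on a genuinely Reifenberg domain the boundary is not a graph, so one cannot directly localize $\vu$ to a half-space. The resolution is Lemma \ref{lem4.2}: the extension operator $\cE_m$ transplants test functions on $B_R^+$ to admissible test functions on $\Omega_R$ precisely because the boundary is pinched inside $\{|y_1|<\gamma R\}\subset\{-2\gamma R<y_1\}$, and the price is the extra lower-order-looking terms $\vg_\alpha$, $\vh$ supported in the strip. The second technical nuisance is that the measurable direction need not be exactly normal to (the flat model of) the boundary, but the Reifenberg condition \eqref{eq3.49} says it is within $\gamma$ of it, which is enough for the inclusions $B_R^+\subset\Omega_R\subset\{-2\gamma R<y_1\}\cap B_R$ that every step above relies on; the small discrepancy contributes only another $O(\gamma)$-fraction of volume, handled by the same H\"older-on-the-strip device. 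No new estimate beyond Theorem \ref{theorem08061901}, Lemma \ref{lem6.7}, Lemma \ref{cor3.5}, and Lemma \ref{lem4.2} is needed.
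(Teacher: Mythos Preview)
Your overall strategy is correct and matches the paper's: apply Lemma \ref{lem4.2} to rewrite the equation on $B_R^+$, split $\vu=\vw+\vv$ via Theorem \ref{theorem08061901}, and invoke Lemma \ref{lem6.7} for the homogeneous piece $\vv$. However, there is a genuine error in your extension step. You propose to extend $V$ to the strip $\Omega_{R/4}\setminus B_{R/4}^+$ by setting $V:=U$ there, claiming the extra contribution is ``absorbed into the $\gamma^{1/\nu'}$-term via H\"older.'' This does not work: \eqref{eq17.11} is an $L_\infty$ bound, and $\|U\|_{L_\infty}$ on the strip is not controlled by any integral average of $U$, no matter how thin the strip is. The H\"older-on-the-strip device you cite from Proposition \ref{prop7.9} bounds $L_1$ or $L_2$ averages of $1_{\text{strip}}\,U$, not pointwise values.

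The fix---and what the paper actually does---is to reverse the roles: extend $V$ by $0$ on $\Omega^*=\Omega_R\cap\{y_1<0\}$ and extend $W$ by $U$ there. Then $U\le V+W$ holds trivially on $\Omega^*$; the $L_\infty$ bound \eqref{eq17.11} reduces to $\|V\|_{L_\infty(B_{R/4}^+)}$, which Lemma \ref{lem6.7} already controls by $N(V^2)_{B_{R/2}^+}^{1/2}\le N(U^2)_{\Omega_R}^{1/2}+N(W^2)_{\Omega_R}^{1/2}$; and the extra piece $(1_{\Omega^*}U^2)_{\Omega_R}^{1/2}$ appearing in $(W^2)_{\Omega_R}^{1/2}$ is precisely the kind of strip average that H\"older bounds by $N\gamma^{1/\nu'}(U^\nu)_{\Omega_R}^{1/\nu}$, yielding \eqref{eq17.10}. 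With this correction the rest of your outline goes through.
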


\begin{proof}
The proof is similar to that of Proposition \ref{prop7.9} with some modifications.
We assume that Assumption \ref{assump1} holds in the original coordinates.
Without loss of generality, we may further  assume that the coefficients $\bar a_{\alpha\beta}$ are infinitely differentiable.

Assertion (i) is basically an interior estimate which does not involve boundary conditions, so the proof is exactly the same as that of Assertion (i) in \cite[Lemma 8.3]{DK10}.

Next, we prove Assertion (ii).
Due to Assumption \ref{assump1}, by shifting the origin upward, we can assume that
$$
B_R^+(x_0)
\subset \Omega_R(x_0) \subset \{(x_1,x') : -2\gamma R < x_1 \} \cap B_R(x_0)
$$
where $x_0 \in \partial \Omega$ (see Figure \ref{fg2}). Define $\bar a_{\alpha\beta}$ as in Section \ref{sec4}.
\begin{figure}
\centering
\includegraphics{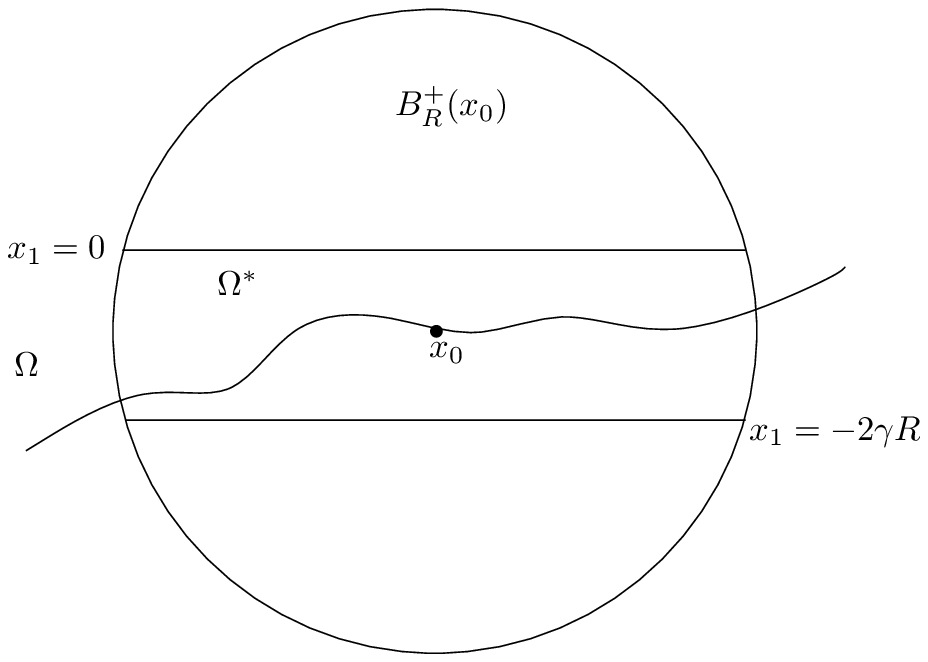}
\caption{}\label{fg2}
\end{figure}
Then $\vu$ satisfies \eqref{eq17.23b} in $B_R^+(x_0)$ with the conormal derivative condition on $\Gamma_R(x_0)$.
By following the argument in the proof of Proposition \ref{prop7.9}, we can find $\vw \in W_2^m(\bR^d_+)$ and $\vv \in W_{2}^m(B_R^+(x_0))$ such that $\vu = \vw + \vv$, the function $\vw$ satisfies
\begin{equation}
							\label{eq08_07}
\sum_{k=0}^m\lambda^{\frac 1 2-\frac k {2m}}(|D^k  \vw|^2)_{B_R^+(x_0)}^{1/2}
\le N\gamma^{1/ {\nu'}} (U^\nu)_{\Omega_R(x_0)}^{1/ \nu}+
N(F^2)_{\Omega_R(x_0)}^{1/2},
\end{equation}
and the function $\vv$ satisfies
$$
(-1)^m \cL \vv + \lambda \vv = 0
$$
in $B_{R/2}^+(x_0)$ with the conormal derivative condition on $\Gamma_{R/2}(x_0)$. We define $V$ and $W$ in $B_R^+(x_0)$ as in Section \ref{sec4}.
As noted in the proof of Proposition \ref{prop7.9}, we can assume that $\vv$ is infinitely differentiable.
Applying Lemma \ref{lem6.7}, we get
\begin{equation}
							\label{eq08_08}
\|V\|_{L_\infty(B^+_{R/4}(x_0))}
\le N (V^2)_{B^+_{R/2}(x_0)}^{1/2}.
\end{equation}
Now we extend $W$ and $V$ on $\Omega^* = \bR^d_- \cap \Omega_R(x_0)$
by setting $W = U$ and $V = 0$, respectively.
Then we see that by H\"{o}lder's inequality and \eqref{eq08_07}
\begin{align*}
\left(W^2\right)^{1/2}_{\Omega_R(x_0)}
&= \left[ \frac{1}{|\Omega_R(x_0)|}\int_{B_R^+(x_0)} W^2 \, dx
+ \frac{1}{|\Omega_R(x_0)|} \int_{\Omega^*} U^2 \, dx \right]^{1/2}\\
&\le N \left( W^2 \right)_{B_R^+(x_0)}^{1/2}
+ N \left( 1_{\Omega^*} \right)_{\Omega_R(x_0)}^{1/\nu'}
\left( U^\nu \right)^{1/\nu}_{\Omega_R(x_0)}\\
&\le N\gamma^{1/ {\nu'}} (U^\nu)_{\Omega_R(x_0)}^{1/ \nu}+
N(F^2)_{\Omega_R(x_0)}^{1/2}.
\end{align*}
Upon recalling that the origin was shifted from $x_0$, we arrive at \eqref{eq17.10}.
To prove \eqref{eq17.11} we observe that by \eqref{eq08_08} and the fact that $V\le U+W$
$$
\|V\|_{L_\infty(\Omega_{R/4}(x_0))}
= \|V\|_{L_\infty(B_{R/4}^+(x_0))}
\le N \left(V^2\right)^{1/2}_{B_{R/2}^+(x_0)}
$$
$$
\le N \left(V^2\right)^{1/2}_{\Omega_R(x_0)}
\le N \left(W^2\right)^{1/2}_{\Omega_R(x_0)}
+ N \left(U^2\right)^{1/2}_{\Omega_R(x_0)}.
$$
This together with \eqref{eq17.10} gives \eqref{eq17.11}.
This completes the proof of the lemma.
\end{proof}

For a function $f$ on a set $\cD\subset\bR^{d}$, we define
its maximal function $\cM f$ by $\cM f=\cM (I_{\cD}f)$.
For any $s>0$, we introduce two level sets
$$
\cA(s)=\{x\in \Omega:U> s\},
$$
$$
\cB(s)=\Big\{x\in \Omega:\gamma^{-1/\nu'}\big(\cM(F^2)\big)^{1/2}+
\big(\cM(U^\nu)\big)^{1/\nu}>s\Big\}.
$$
With Lemma \ref{lem7.3} in hand, we get the following corollary.

\begin{corollary}
                                    \label{cor7.5}
Under the assumptions of Lemma \ref{lem7.3}, suppose $0\in \overline{\Omega}$ and Assumption \ref{assump1} ($\gamma$) holds. Let $s\in (0,\infty)$ be a constant. Then there exists a constant $\kappa\in (1,\infty)$, depending only on $d$, $n$, $m$, $\delta$, and $\nu$, such that the following holds.
If
\begin{equation}
                    \label{eq15.59}
\big|\Omega_{R/32}\cap \cA(\kappa s)\big|> \gamma^{2/\nu'} |\Omega_{R/32}|,
\end{equation}
then we have $\Omega_{R/32}\subset \cB(s)$.
\end{corollary}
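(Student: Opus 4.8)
The plan is to prove the contrapositive: assuming $\Omega_{R/32}\not\subset\cB(s)$, I will derive $\big|\Omega_{R/32}\cap\cA(\kappa s)\big|\le\gamma^{2/\nu'}|\Omega_{R/32}|$ once $\kappa=\kappa(d,n,m,\delta,\nu)$ is taken sufficiently large, which contradicts \eqref{eq15.59}. So fix $x_1\in\Omega_{R/32}$ with $x_1\notin\cB(s)$. By the definition of $\cB(s)$ and nonnegativity of the two terms, this yields $\cM(F^2)(x_1)\le\gamma^{2/\nu'}s^2$ and $\cM(U^\nu)(x_1)\le s^\nu$, and these are the only facts about $U$ and $F$ that will be used.

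Next I would apply Lemma \ref{lem7.3} on a suitable set $\cO$, splitting into two cases. If $\dist(0,\partial\Omega)>R/8$, use part (i) of the lemma at the origin with radius $\rho:=\min\{R,\dist(0,\partial\Omega)\}\in(R/8,R]$, so that $\cO:=B_\rho$ and the $L_\infty$-bound for $V$ holds on $\cO':=B_{\rho/4}$; if $\dist(0,\partial\Omega)\le R/8$, pick $y_0\in\partial\Omega$ with $|y_0|\le R/8$ and use part (ii) at $y_0$ at radius $R$ (legitimate since part (ii) of Assumption \ref{assump1} ($\gamma$) holds at $y_0$ for every radius $\le R_0=1$), so that $\cO:=\Omega_R(y_0)$ and the bound for $V$ holds on $\cO':=\Omega_{R/4}(y_0)$. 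In both cases one checks directly that $\Omega_{R/32}\subset\cO'\subset\cO\subset B_{2R}(x_1)$; moreover $|B_{2R}(x_1)|\le N(d)|\cO|$ and $|\cO|\le N(d)|\Omega_{R/32}|$, the latter because the Reifenberg condition in Assumption \ref{assump1} forces $\Omega$ to fill a fixed fraction of every small ball centered in $\overline{\Omega}$, so that $|\Omega_{R/32}|\ge N(d)^{-1}|B_{R/32}|$. Lemma \ref{lem7.3} then produces nonnegative $V,W$ on $\cO$ with $U\le V+W$ on $\cO$ and, by \eqref{eq17.10}--\eqref{eq17.11} (or their interior analogues),
$$\big(W^2\big)^{1/2}_{\cO}\le N\gamma^{1/\nu'}\big(U^\nu\big)^{1/\nu}_{\cO}+N\big(F^2\big)^{1/2}_{\cO},\qquad\|V\|_{L_\infty(\cO')}\le N\gamma^{1/\nu'}\big(U^\nu\big)^{1/\nu}_{\cO}+N\big(F^2\big)^{1/2}_{\cO}+N\big(U^2\big)^{1/2}_{\cO}.$$

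Now I would convert the maximal-function bounds at $x_1$ into bounds on averages over $\cO$. Since $\cO\subset B_{2R}(x_1)$ and $|B_{2R}(x_1)|\le N(d)|\cO|$, we get $\big(U^\nu\big)_{\cO}\le N(d)\cM(U^\nu)(x_1)\le N(d)s^\nu$ and $\big(F^2\big)_{\cO}\le N(d)\gamma^{2/\nu'}s^2$, while $\big(U^2\big)^{1/2}_{\cO}\le\big(U^\nu\big)^{1/\nu}_{\cO}\le Ns$ by Hölder's inequality. Substituting into the two displayed estimates and using $\gamma<1$, we obtain $\big(W^2\big)^{1/2}_{\cO}\le N_0\gamma^{1/\nu'}s$ and $\|V\|_{L_\infty(\cO')}\le N_1 s$ with $N_0,N_1$ depending only on $d,n,m,\delta,\nu$. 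Take $\kappa:=\max\{2N_1,\,2N_0\sqrt{N(d)},\,2\}$. On $\Omega_{R/32}\subset\cO'$ we then have $V\le N_1 s\le\kappa s/2$, so if $x\in\Omega_{R/32}$ satisfies $U(x)>\kappa s$ then $W(x)>\kappa s-V(x)\ge\kappa s/2$; hence by Chebyshev's inequality,
$$\big|\Omega_{R/32}\cap\cA(\kappa s)\big|\le\big|\{x\in\cO:W(x)>\kappa s/2\}\big|\le\frac{4(W^2)_{\cO}|\cO|}{\kappa^2 s^2}\le\frac{4N_0^2 N(d)\gamma^{2/\nu'}}{\kappa^2}|\Omega_{R/32}|\le\gamma^{2/\nu'}|\Omega_{R/32}|.$$
This contradicts \eqref{eq15.59}, so $\Omega_{R/32}\subset\cB(s)$.

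The Hölder and Chebyshev manipulations are routine; the step needing real care is the geometric bookkeeping in the two cases — in particular, when $0$ lies close to but not on $\partial\Omega$, arranging that a single application of Lemma \ref{lem7.3}(ii) at a nearest boundary point and at scale $R$ captures all of $\Omega_{R/32}$ in its quarter-set, and keeping $|\cO|$ comparable to $|\Omega_{R/32}|$, which rests on the interior measure-density consequence of Reifenberg flatness. One also has to choose $\kappa$ simultaneously large enough to push the $L_\infty$-bound for $V$ below $\kappa s/2$ on the good set and large enough for the Chebyshev estimate for $W$ to overcome the factor $\gamma^{2/\nu'}$.
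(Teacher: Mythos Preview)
Your proof is correct and follows essentially the same approach as the paper's: argue by contraposition, pick a point in $\Omega_{R/32}\setminus\cB(s)$ to control the relevant averages via the maximal function, split into the interior case ($\dist(0,\partial\Omega)$ large) and the near-boundary case (apply Lemma~\ref{lem7.3}(ii) at a nearest boundary point with radius $R$), and finish with Chebyshev on $W$ after absorbing the $L_\infty$-bound of $V$ into $\kappa$. The only cosmetic differences are that the paper normalizes $s=1$ and, in the interior case, works directly with the fixed radius $R/8$ rather than your $\rho=\min\{R,\dist(0,\partial\Omega)\}$; your more explicit tracking of the volume comparisons $|\cO|\sim|\Omega_{R/32}|$ and $\cO\subset B_{2R}(x_1)$ is a welcome elaboration of steps the paper leaves implicit.
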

\begin{proof}
By dividing $\vu$ and $\vf$ by $s$, we may assume $s=1$.
We prove by contradiction. Suppose at a point $x\in \Omega_{R/32}$, we have
\begin{equation}
                            \label{eq15.38}
\gamma^{-1/\nu'}\big(\cM(F^2)(x)\big)^{1/2}+
\big(\cM(U^\nu)(x)\big)^{1/\nu}\le 1.
\end{equation}
Let us consider two cases.

First we consider the case when $\dist(0,\partial \Omega)\ge R/8$.
Notice that
$$
x\in\Omega_{R/32}=B_{R/32}\subset B_{R/8}\subset  \Omega.
$$
Due to Lemma \ref{lem7.3} (i), we have $U \le V + W$ and, by \eqref{eq15.38},
\begin{equation}
                                \label{eq15.47}
\|V\|_{L_\infty(B_{R/32})}
\le N_1,
\quad
(W^2)_{B_{R/8}}^{1/2}
\le N_1\gamma^{1/\nu'},
\end{equation}
where $N_1$ and constants $N_i$ below depend only on $d$, $n$, $m$, $\delta$, and $\nu$. By \eqref{eq15.47}, the triangle inequality and Chebyshev's inequality, we get
\begin{multline}
                                \label{eq5.20}
\big|\Omega_{R/32}\cap \cA(\kappa)\big|=
\big|\{x\in \Omega_{R/32}: U>\kappa\}\big|\\
\le \big|\{x\in \Omega_{R/32}: W>\kappa-N_1\}\big|
\le (\kappa-N_1)^{-2} N_1^2 \gamma^{2/\nu'}|B_{R/8}|,
\end{multline}
which contradicts with \eqref{eq15.59} if we choose $\kappa$ sufficiently large.

Next we consider the case when $\dist(0,\partial \Omega)< R/8$. We take $y\in \partial\Omega$ such that $|y|=\dist(0,\partial \Omega)$. Notice that in this case we have
$$
x\in\Omega_{R/32}\subset \Omega_{R/4}(y)\subset \Omega_{R}(y).
$$
Due to Lemma \ref{lem7.3} (ii), we have $U\le V+ W$ in $\Omega_R(y)$ and, by \eqref{eq15.38},
\begin{equation}
                                \label{eq17.23c}
\|V\|_{L_\infty(\Omega_{R/32})}\le \|V\|_{L_\infty(\Omega_{R/4}(y))}
\le N_2,
\quad
(W^2)_{\Omega_{R}(y)}^{1/2}
\le N_2\gamma^{1/\nu'}.
\end{equation}
By \eqref{eq17.23c}, the triangle inequality and Chebyshev's inequality, we still get \eqref{eq5.20} with $N_2$ in place of $N_1$,
which contradicts with \eqref{eq15.59} if we choose $\kappa$ sufficiently large.
\end{proof}

\begin{theorem}                         \label{theorem101}
Let $p \in (2,\infty)$, $\lambda > 0$, $x_0\in \bR^{d}$
and $\vf_{\alpha} \in L_p(\Omega)$. Suppose that $a_{\alpha\beta}\equiv 0$ for any $\alpha,\beta$ satisfying $|\alpha|+|\beta|<2m$, and $\vu \in W_p^m(\Omega)$ is supported on $B_{\gamma}(x_0)\cap \overline{\Omega}$ and satisfies \eqref{eq11_01} in $\Omega$ with the conormal derivative boundary condition.
There exist positive constants $\gamma \in (0,1/50)$ and $N$,
depending only on $d,\delta,m,n, p$, such that, under Assumption \ref{assump1} ($\gamma$)
we have
\begin{equation}
                                \label{eq16.00}
\|U \|_{L_p(\Omega)}
\le N \|F\|_{L_p(\Omega)},
\end{equation}
where $N = N(d,\delta,m,n,p)$.
\end{theorem}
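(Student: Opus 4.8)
We shall prove \eqref{eq16.00} by running the same perturbation–plus–large–scale scheme as in Theorem \ref{theorem001b}, but with the generalized Fefferman--Stein theorem (Theorem \ref{th081201}), which is tailored to the flat geometry of $\bR^d_+$, replaced by a level-set estimate built from Corollary \ref{cor7.5} and the ``crawling of ink drops'' lemma of \cite{Sa80,KS80}, in the spirit of \cite{CaPe98}. By a dilation we may assume $R_0=1$. Fix $\nu\in(2,p)$ and put $\nu'=2\nu/(\nu-2)$; since $\vu\in W^m_p(\Omega)$ and $\nu<p$ we have $\vu\in W^m_{\nu,\text{loc}}(\overline\Omega)$, so Lemma \ref{lem7.3} and Corollary \ref{cor7.5} apply. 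Let $\kappa=\kappa(d,n,m,\delta,\nu)\ge 1$ be the constant supplied by Corollary \ref{cor7.5}. Because $\vu$ is supported in $B_\gamma(x_0)\cap\overline\Omega$, so is $U$; in particular $|\{U>0\}|\le|B_\gamma(x_0)|=\omega_d\gamma^d$, and $\|U\|_{L_p(\Omega)}<\infty$ is already known, which will make the final absorption legitimate.

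The heart of the argument is the level-set inequality
\[
|\cA(\kappa s)|\le N\gamma^{2/\nu'}\,|\cB(s)|\qquad\text{for every }s>0,\quad N=N(d).
\]
I would deduce this from the crawling-of-ink-drops lemma of \cite{Sa80,KS80}, applied with $\varepsilon:=\gamma^{2/\nu'}$, $\cC:=\cA(\kappa s)$, $\cD:=\cB(s)$, on an ambient cube $Q_0$ whose side is a small dimensional constant and which contains $B_\gamma(x_0)$ (this imposes $\gamma\le c(d)$, harmless as $\gamma$ is still at our disposal). The ``density implies containment'' hypothesis of that lemma — for a dyadic subcube $Q\subset Q_0$ meeting $\Omega$, with enclosing ball $B_{r_Q}(y_Q)$ of radius $r_Q\le 1/32$, the condition $|\Omega\cap Q\cap\cA(\kappa s)|>\varepsilon|\Omega\cap Q|$ forces the dyadic parent $\widehat Q$ to satisfy $\Omega\cap\widehat Q\subset\cB(s)$ — is precisely Corollary \ref{cor7.5}, after re-centering at a point of $\overline\Omega$ near $y_Q$ and taking $R=32\,r_Q$; Assumption \ref{assump1} is in force globally, and near $\partial\Omega$ one also uses the Reifenberg lower volume bound $|\Omega_r(y)|\ge c(d)r^d$ for $y\in\overline\Omega$, $r\le1$, which follows from \eqref{eq3.49}. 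Extending $U$ by zero outside $\Omega$ and organizing the Vitali covering as in \cite{CaPe98} is exactly what lets the classical lemma operate on the irregular domain. Finally, the smallness hypothesis $|\cC|\le\varepsilon|Q_0|$ holds for all $s>0$ at once, because $|\cA(\kappa s)|\le|\{U>0\}|\le\omega_d\gamma^d\le\gamma^{2/\nu'}|Q_0|$ once $\gamma$ is small (note $d>2/\nu'$, so $\gamma^{d-2/\nu'}\to0$).

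Granting the level-set inequality, multiply it by $p\,s^{p-1}$ and integrate:
\begin{align*}
\|U\|_{L_p(\Omega)}^p&=p\kappa^p\int_0^\infty s^{p-1}|\cA(\kappa s)|\,ds\\
&\le N\kappa^p\gamma^{2/\nu'}\int_\Omega\Big(\gamma^{-1/\nu'}\big(\cM(F^2)\big)^{1/2}+\big(\cM(U^\nu)\big)^{1/\nu}\Big)^p\,dx.
\end{align*}
Since $p/2>1$ and $p/\nu>1$, the Hardy--Littlewood maximal function theorem bounds the last integral by $N\big(\gamma^{-p/\nu'}\|F\|_{L_p(\Omega)}^p+\|U\|_{L_p(\Omega)}^p\big)$, so that
\[
\|U\|_{L_p(\Omega)}^p\le N\kappa^p\gamma^{(2-p)/\nu'}\|F\|_{L_p(\Omega)}^p+N\kappa^p\gamma^{2/\nu'}\|U\|_{L_p(\Omega)}^p.
\]
As $\kappa$ is independent of $\gamma$, I would then fix $\gamma=\gamma(d,\delta,m,n,p)$ small enough that $N\kappa^p\gamma^{2/\nu'}\le 1/2$ and that the geometric constraints above hold, absorb the last term into the left-hand side, and arrive at \eqref{eq16.00} with $N=N(d,\delta,m,n,p)$.

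The main obstacle is the level-set inequality itself: transplanting the crawling lemma, classically stated on a cube, to a possibly unbounded Reifenberg flat domain whose boundary may be fractal and possess no well-defined normal. The delicate points will be (a) arranging the dyadic/Vitali covering so that the ``density $\Rightarrow$ containment'' conclusion of Corollary \ref{cor7.5} can be iterated correctly across scales on $\Omega$ — handled by following \cite{CaPe98} together with the Reifenberg volume bound; (b) securing the top-scale smallness of $|\cA(\kappa s)|$, for which the hypothesis that $\vu$ is supported in the small ball $B_\gamma(x_0)$ is essential; and (c) fixing the parameters in the correct order (first $\nu$ from $p$, then $\kappa$ from Corollary \ref{cor7.5}, and only afterwards $\gamma$) so that the absorption is valid and all constants depend only on $d,\delta,m,n,p$.
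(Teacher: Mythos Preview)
Your proposal is correct and follows the same architecture as the paper: Corollary \ref{cor7.5} supplies the density dichotomy, the crawling-of-ink-drops lemma converts it into the level-set inequality $|\cA(\kappa s)|\le N\gamma^{2/\nu'}|\cB(s)|$, and integrating in $s$ together with the Hardy--Littlewood maximal theorem and a final absorption yields \eqref{eq16.00}.

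The one noteworthy difference is how the top-scale smallness hypothesis of the crawling lemma is secured. The paper uses Chebyshev's inequality $|\cA(\kappa s)|\le(\kappa s)^{-2}\|U\|_{L_2(\Omega)}^2$, which gives the required bound only for $\kappa s\ge\gamma^{-1/\nu'}\|U\|_{L_2(\Omega)}$; this forces a split of the $s$-integral, a subsequent H\"older estimate $\|U\|_{L_2(\Omega)}\le N\gamma^{d(1/2-1/p)}\|U\|_{L_p(\Omega)}$ using the small support, and the specific choice $\nu=p/2+1$ so that the resulting exponent $d(p/2-1)+(2-p)/\nu'$ exceeds $2/\nu'$. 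You instead invoke the support bound $|\cA(\kappa s)|\le|\{U>0\}|\le|B_\gamma|\le N\gamma^{2/\nu'}$ directly (valid since $d>2/\nu'$), which holds uniformly in $s$, eliminates the split, and works for any $\nu\in(2,p)$. Your route is marginally cleaner; otherwise the two proofs coincide.
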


\begin{proof}
We fix $\nu=p/2+1$ and let $\nu'=2\nu/(\nu-2)$. Then we see that $\vu \in W_{\nu,\text{loc}}^m(\overline{\Omega})$. Let $\kappa$ be the constant in Corollary \ref{cor7.5}. Recall the elementary identity:
$$
\|f\|_{L_p(\cD)}^p=p\int_0^\infty \big|\{x\in\cD:|f(x)|> s\}\big|s^{p-1}\,ds,
$$
which implies that
\begin{equation}
                                        \label{eq5.07}
\|U\|_{L_p(\Omega)}^p=p\kappa^p \int_0^\infty|\cA(\kappa s)|s^{p-1}\,ds.
\end{equation}
Thus, to obtain \eqref{eq16.00} we need to estimate $\cA(\kappa s)$.
First, we note that by Chebyshev's inequality
\begin{equation}
                            \label{eq17.57}
|\cA(\kappa s)|\le (\kappa s)^{-2}\|U\|_{L_2(\Omega)}^2.
\end{equation}
When $\kappa s \ge \gamma^{-1/\nu'} \|U\|_{L_2(\Omega)}$, this indicates that
$$
|\cA(\kappa s)|\le \gamma^{2/\nu'}.
$$
With the above inequality and Corollary 5.2 in hand, we see that all the conditions of the ``crawling of ink drops'' lemma are satisfied; see \cite{Sa80}, \cite[Sect. 2]{KS80}, or \cite[Lemma 3]{BW05} for the lemma. Hence we have
\begin{equation}
                            \label{eq18.34}
|\cA(\kappa s)|\le  N_4\gamma^{2/\nu'}|\cB(s)|.
\end{equation}
Now we estimate $\|U\|_{L_p(\Omega)}^p$ in \eqref{eq5.07} by splitting the integral into two depending on the range of $s$. If $\kappa s  \ge \gamma^{-1/\nu'}\|U\|_{L_2(\Omega)}$, we use \eqref{eq18.34}. Otherwise, we use  \eqref{eq17.57}. Then it follows that
\begin{align*}
\|U\|_{L_p(\Omega)}^p &\le N_5\gamma^{(2-p)/\nu'}\big(\|U\|_{L_2(\Omega)}^p+
\big\|\big(\cM(F^2)\big)^{1/2}\big\|_{L_p(\Omega)}^p\big)\\
&\quad +N_5\gamma^{2/\nu'}\big\|\big(\cM(U^\nu)\big)^{1/\nu}
\big\|_{L_p(\Omega)}^p\\
&\le N_5\gamma^{(2-p)/\nu'}\|U\|_{L_2(\Omega)}^p+N_6\gamma^{(2-p)/\nu'}
\|F\|_{L_p(\Omega)}^p+N_6\gamma^{2/\nu'}\|U\|_{L_p(\Omega)}^p,
\end{align*}
where we used the Hardy--Littlewood maximal function theorem in the last inequality.
By H\"older's inequality,
\begin{equation}
                            \label{eq20.31bb}
\|U\|_{L_2(\Omega)}=\|U\|_{L_2(B_\gamma(x_0)\cap\Omega)}
\le N\|U\|_{L_p(\Omega)}\gamma^{d(1/2-1/p)}.
\end{equation}
By the choice of $\nu$, $d(p/2-1)+(2-p)/\nu'>2/\nu'$.
Thus, we get
$$
\|U\|_{L_p(\Omega)}^p \le N_6\gamma^{(2-p)/\nu'}
\|F\|_{L_p(\Omega)}^p
+N_6\gamma^{2/\nu'}\|U\|_{L_p(\Omega)}^p.
$$
To get the estimate \eqref{eq16.00}, it suffices to take $\gamma=\gamma(d,n,m,\delta,p)\in (0,1/50]$ sufficiently small such that $N_6\gamma^{2/\nu'}\le 1/2$.
\end{proof}

\begin{proof}[Proof of Theorem \ref{thm5}]
We again consider the following three cases separately.

{\em Case 1: $p=2$.} In this case, the theorem follows directly from the well-known Lax--Milgram lemma.

{\em Case 2: $p\in (2,\infty)$.} Assertion (i) follows from Theorem \ref{theorem101} and the standard partition of unity argument. By the method of continuity, for Assertion (ii) it suffices to prove the solvability for the operator $\cL_1:=\delta_{ij}\Delta^m$, which is not immediate because the domain $\Omega$ is irregular. We approximate $\Omega$ by regular domains. Recall the definition of $\Omega^\varepsilon$ in \eqref{eq2.22}. By Lemma \ref{lem3.11}, for any $\varepsilon\in (0,1/4)$, $\Omega^\varepsilon$ satisfies Assumption \ref{assump3} ($N_0 \gamma^{1/2}$) with a constant $R_1(\varepsilon)>0$. Thanks to Lemma \ref{lem3.19}, there is a sequence of expanding smooth domains $\Omega^{\varepsilon,k}$ which converges to $\Omega^\varepsilon$ as $k\to \infty$. Moreover, $\Omega^{\varepsilon,k}$ satisfies Assumption \ref{assump3} ($N_0 \gamma^{1/2}$) with the constant $R_1(\varepsilon)/2$ which is independent of $k$. In particular, $\Omega^{\varepsilon,k}$ satisfies Assumption \ref{assump1} ($N_0\gamma^{1/2}$) with the constant $R_1(\varepsilon)/2$. By the classical result, there is a constant $\lambda_\varepsilon=\lambda_\varepsilon(d,n,m,p,\delta)\ge \lambda_0$ such that, for any $\lambda>\lambda_\varepsilon$, the equation
$$
(-1)^m\cL_1\vu+\lambda \vu=\sum_{|\alpha|\le m}D^\alpha \vf_\alpha\quad \text{in}\,\,\Omega^{\varepsilon,k}
$$
with the conformal derivative boundary condition has a unique solution $\vu^{\varepsilon,k}\in W^{m}_p(\Omega^{\varepsilon,k})$. The a priori estimate above gives
\begin{equation}
                                        \label{eq3.07}
\|\vu^{\varepsilon,k}\|_{W^m_p(\Omega^{\varepsilon,k})}\le N_\varepsilon,
\end{equation}
where $N_\varepsilon>0$ is a constant independent of $k$. By the weak compactness, there is a subsequence, which is still denoted by $\vu^{\varepsilon,k}$, and functions $\vv^\varepsilon,\vv^\varepsilon_\alpha\in L_p(\Omega^{\varepsilon}),1\le |\alpha|\le m$, such that weakly in $L_p(\Omega^{\varepsilon})$,
$$
\vu^{\varepsilon,k}I_{\Omega^{\varepsilon,k}}\rightharpoonup \vv^\varepsilon,\quad
D^\alpha \vu^{\varepsilon,k}I_{\Omega^{\varepsilon,k}}\rightharpoonup \vv^\varepsilon_\alpha\quad \forall\, \alpha,\,1\le |\alpha|\le m.
$$
It is easily seen that  $\vv^\varepsilon_\alpha=D^\alpha \vv^\varepsilon$ in the sense of distributions. Thus, by \eqref{eq3.07} and the weak convergence,
$\vv^{\varepsilon}\in W^m_p(\Omega^\varepsilon)$ is a solution to
\begin{equation}
                                        \label{eq4.58}
(-1)^m\cL_1\vu+\lambda \vu=\sum_{|\alpha|\le m}D^\alpha \vf_\alpha\quad \text{in}\,\,\Omega^{\varepsilon}
\end{equation}
with the conormal derivative boundary condition. We have proved the solvability for any $\lambda>\lambda_\varepsilon$. Recall that, by Lemma \ref{lem3.11}, $\Omega^\varepsilon$ satisfies Assumption \ref{assump1} ($N_0\gamma^{1/2}$) with $R_0=1/2$. By the a priori estimate in Assertion (i) and the method of continuity, for any $\lambda>\lambda_0$ there is a unique solution $\vu^\varepsilon\in W^m_p(\Omega^\varepsilon)$ to \eqref{eq4.58} with the conormal derivative boundary condition. Moreover, we have
\begin{equation}
                                        \label{eq3.07b}
\|\vu^{\varepsilon}\|_{W^m_p(\Omega^{\varepsilon})}\le N,
\end{equation}
where $N$ is a constant independent of $\varepsilon$. Again
by the weak compactness, there is a subsequence $\vu^{\varepsilon_j}$, and functions $\vu,\vu_\alpha\in L_p(\Omega),1\le |\alpha|\le m$, such that weakly in $L_p(\Omega)$,
$$
\vu^{\varepsilon_j}I_{\Omega^{\varepsilon_j}}\rightharpoonup \vu,\quad
D^\alpha \vu^{\varepsilon_j}I_{\Omega^{\varepsilon_j}}\rightharpoonup \vu_\alpha\quad \forall\, \alpha,\,1\le |\alpha|\le m.
$$
It is easily seen that  $\vu_\alpha=D^\alpha \vu$ in the sense of distributions. Thus, by \eqref{eq3.07b} and the weak convergence,
$\vu\in W^m_p(\Omega)$ is a solution to
\begin{equation*}
(-1)^m\cL_1\vu+\lambda \vu=\sum_{|\alpha|\le m}D^\alpha \vf_\alpha\quad \text{in}\,\,\Omega
\end{equation*}
with the conormal derivative boundary condition. The uniqueness then follows from the a priori estimate. This completes the proof of Assertion (ii).

{\em Case 3: $p\in (1,2)$.} The a priori estimate in Assertion (i) is a directly consequence of the solvability when $p\in (2,\infty)$ and the duality argument. Then the solvability in Assertion (ii) follows from the a priori estimate by using the same argument as in Case 2.

The theorem is proved.
\end{proof}


We now give the proofs of Corollary \ref{cor7} and Theorem \ref{thmB}.

\begin{proof}[Proof of Corollary \ref{cor7}]
{\em Case 1: $p=2$.} We define a Hilbert space
$$
H:=\{\vu\in W^m_2(\Omega)\,|\,(\vu)_\Omega=(D\vu)_\Omega=\ldots=(D^{m-1}\vu)_\Omega=0\}.
$$
By the Lax--Milgram lemma, there is a unique $\vu\in H$ such that for any $\vv\in H$,
\begin{equation}
                        \label{eq22.44}
\int_\Omega a_{\alpha\beta}D^\beta \vu D^\alpha \vv\,dx=
\sum_{|\alpha|=m}\int_\Omega (-1)^{|\alpha|}\vf_\alpha D^\alpha \vv
\end{equation}
and
$$
\|D^m \vu\|_{L_2(\Omega)}\le N\sum_{|\alpha|=m}\|\vf_\alpha\|_{L_2(\Omega)}.
$$
Note that any function $\vv\in W^m_2(\Omega)$ can be decomposed as a sum of a function in $H$ and a polynomial of degree at most $m-1$. Therefore, \eqref{eq22.44} also holds for any $\vv\in W^m_2(\Omega)$. This implies that $\vu\in W_2^m(\Omega)$ is a solution to the original equation. On the other hand, by the uniqueness of the solution in $H$, any solution $\vw\in W_2^m(\Omega)$ can only differ from $\vu$ by a polynomial of order at most $m-1$.

{\em Case 2: $p\in (2,\infty)$.} First we suppose that $p$ satisfies $1/p>1/2-1/d$. Since $\Omega$ is bounded, $\vf\in L_2(\Omega)$. Let $\vu$ be the unique $H$-solution to the equation. By Theorem \ref{thm5}, there is a unique solution $\vv\in W^m_p(\Omega)$ to the equation
\begin{equation}
                                        \label{eq6.01}
(-1)^m\cL \vv+(\lambda_0+1)\vv=\sum_{|\alpha|= m}D^\alpha \vf_\alpha+(\lambda_0+1)\vu\quad \text{in}\,\,\Omega
\end{equation}
with the conormal derivative boundary condition.
Moreover, we have
\begin{equation}
                                \label{eq23.17}
\|\vv \|_{W^{m}_p(\Omega)}
\le N \sum_{|\alpha|= m}\|\vf_\alpha \|_{L_p(\Omega)}+N\|\vu \|_{L_p(\Omega)}.
\end{equation}
By the Sobolev imbedding theorem and the $W^m_2$ estimate, we have
$$
\|\vu\|_{L_p(\Omega)}\le N\|\vu\|_{W^1_2(\Omega)}\le N\sum_{|\alpha|= m}\|\vf_\alpha\|_{L_2(\Omega)}\le N\sum_{|\alpha|= m}\|\vf_\alpha\|_{L_{p}(\Omega)},
$$
which together with \eqref{eq23.17} gives
$$
\|\vv \|_{W^{m}_p(\Omega)}\le N \sum_{|\alpha|= m}\|\vf_\alpha \|_{L_p(\Omega)}.
$$
Since both $\vv$ and $\vu$ are $W^m_2(\Omega)$-solutions to \eqref{eq6.01}, by Theorem \ref{thm5} we have $\vu=\vv$. Therefore, the solvability is proved under the assumption $1/p>1/2-1/d$. The general case follows by using a bootstrap argument. The uniqueness is due to the uniqueness of $W^m_2$-solutions.

{\em Case 3: $p\in (1,2)$.} By the duality argument and Case 2, we have the a priori estimate \eqref{eq23.08} for any $\vu\in W^m_p(\Omega)$ satisfying \eqref{eq22.34} with the conormal derivative boundary condition. For the solvability, we take a sequence
$$
\vf^{\,k}_\alpha=\min\{\max\{\vf_\alpha,-k\},k\}\in L_2(\Omega)
$$
which converges to
$\vf_\alpha$ in $L_p(\Omega)$. Let $\vu^k$ be the
$H$-solution to the equation with the right-hand side $\vf^{\,k}_\alpha$.
Since $\Omega$ is bounded, we have $\vu^k\in W^m_p(\Omega)$.
By the a priori estimate, $\vu^k$ is a Cauchy sequence
in $W^m_p(\Omega)$. Then it is easily seen that the limit $\vu$ is the $W^m_p(\Omega)$-solution to the original equation. Next we show the uniqueness. Let $\vu_1$ be another $W^m_p(\Omega)$-solution to the equation. Then $\vv:=\vu-\vu_1\in W^m_p(\Omega)$ satisfies the equation with the zero right-hand side. Following the bootstrap argument in Case 2, we infer that $\vv\in W^m_2(\Omega)$. Therefore, by Case 1, $\vv$ must be a polynomial of degree at most $m-1$.

The corollary is proved.
\end{proof}

\begin{proof}[Proof of Theorem \ref{thmB}]
The theorem is proved in the same way as Corollary  \ref{cor7} in Cases 2 and 3 by using the classical $W^1_2$-estimate of the conormal derivative problem on a domain with a finite measure; see Theorem 13 of \cite{DongKim08a}. We remark that although in Theorem 13 (i) of \cite{DongKim08a} it is assumed that $b_i=c=0$, the same proof works under the relaxed condition $-D_ib_i+c=0$ in $\Omega$ and $b_in_i=0$ on $\partial\Omega$ in the weak sense, i.e., Assumption ($\text{H}^*$).
\end{proof}


\section*{Acknowledgement}

The authors are sincerely grateful to the referee for his careful reading and many helpful comments and suggestions.

\bibliographystyle{amsplain}

\def\cprime{$'$}\def\cprime{$'$} \def\cprime{$'$} \def\cprime{$'$}
  \def\cprime{$'$} \def\cprime{$'$}



\end{document}